\documentclass[reqno]{amsart}
\bibliographystyle{plain}

\usepackage{amsfonts}
\usepackage{amssymb}
\usepackage[centertags]{amsmath}
\usepackage{amsthm}
\usepackage{graphicx}
\usepackage{color}
\usepackage{pinlabel}
\usepackage{esint}

\theoremstyle{plain}
\newtheorem{theorem}{Theorem}[section]
\newtheorem{corollary}[theorem]{Corollary}
\newtheorem{lemma}[theorem]{Lemma}

\theoremstyle{definition}
\newtheorem{definition}[theorem]{Definition}

\theoremstyle{remark}
\newtheorem{remark}[theorem]{Remark}
\numberwithin{equation}{section}

\newcommand{\ep}{\varepsilon}
\renewcommand{\geq}{\geqslant}
\renewcommand{\leq}{\leqslant}
\DeclareMathOperator{\curl}{\mathrm{curl}} 
\renewcommand{\div}{\operatorname{div}}

\DeclareMathOperator{\dist}{\mathrm{dist}}
\DeclareMathOperator{\ch}{\mathrm{co}}
\newcommand{\bzero}{\mathbf 0}
\newcommand{\ba}{\mathbf a}
\newcommand{\bb}{\mathbf b}

\newcommand{\be}{\mathbf e}
\newcommand{\bef}{\mathbf f}
\newcommand{\bF}{\mathbf F}
\newcommand{\bg}{\mathbf g}
\newcommand{\bh}{\mathbf h}
\newcommand{\bI}{\mathbf I}
\newcommand{\bj}{\mathbf j}
\newcommand{\bJ}{\mathbf J}
\newcommand{\bk}{\mathbf k}
\newcommand{\bL}{\mathbf L}

\newcommand{\bn}{\mathbf n}
\newcommand{\bp}{\mathbf p}
\newcommand{\bq}{\mathbf q}
\newcommand{\bv}{\mathbf v}
\newcommand{\bV}{\mathbf V}
\newcommand{\bw}{\mathbf w}
\newcommand{\bW}{\mathbf W}
\newcommand{\bx}{\mathbf x}
\newcommand{\bX}{\mathbf X}
\newcommand{\by}{\mathbf y}
\newcommand{\bY}{\mathbf Y}
\newcommand{\bz}{\mathbf z}
\newcommand{\bZ}{\mathbf Z}
\newcommand{\balpha}{\boldsymbol\alpha}
\newcommand{\bbeta}{\boldsymbol\beta}

\newcommand{\bxi}{\boldsymbol\xi}

\newcommand{\bLam}{{\boldsymbol \Lambda}}

\newcommand{\bbz}{\overline{\bz}}
\newcommand{\bzz}{\overline{z}}

\newcommand{\cA}{\mathcal A}
\newcommand{\cB}{\mathcal B}
\newcommand{\cD}{\mathcal D}
\newcommand{\cE}{\mathcal E}

\newcommand{\cG}{\mathcal G}
\newcommand{\cH}{\mathcal H}
\newcommand{\cI}{\mathcal I}

\newcommand{\cM}{\mathcal M}
\newcommand{\cP}{\mathcal P}
\newcommand{\cS}{\mathcal S}

\newcommand{\cZ}{\mathcal Z}


\newcommand{\de}{\mathrm{d}}

\newcommand{\R}[1]{\mathbb{R}^{#1}}
\newcommand{\RR}{\mathbb{R}}
\newcommand{\N}{\mathbb{N}}


\newcommand{\hbf}{\widehat {\bf f}}

\renewcommand{\vec}[2]{\left(\begin{array}{c}
		  #1 \\
		  #2
		  \end{array}\right)}

\newcommand{\vecc}[2]{\left(\begin{array}{c}
		  #1 \\
		  \vdots \\
		  #2
		  \end{array}\right)}
	  
\newcommand{\vecN}[3]{\left(\begin{array}{c}
		  #1 \\
		  #2 \\
		  \vdots \\
		  #3
		  \end{array}\right)}


\makeindex
\begin{document}
\title[Screw Dislocation Dynamics]{
Dynamics for Systems of  
Screw Dislocations}
\author{Timothy Blass, Irene Fonseca, Giovanni Leoni, Marco Morandotti}

\begin{abstract}
The goal of this paper is the analytical validation of a model of Cermelli and Gurtin \cite{Gurtin} for an evolution law for systems of screw dislocations 
under the assumption of antiplane shear.  
The motion of the dislocations is restricted to a discrete
set of glide directions, which are properties of the material. The evolution
law is given by a ``maximal dissipation criterion'', leading
to a system of differential inclusions.
Short time existence, uniqueness, cross-slip, and fine cross-slip of solutions are proved.
\end{abstract}

\maketitle

\section{Introduction}
Dislocations are one-dimensional defects in crystalline materials \cite{N67}. Their modeling 
is of great interest in materials science since 
important material properties, such as rigidity and conductivity, can be strongly
affected by
the presence of dislocations. For example, large collections of dislocations can 
result in plastic deformations in solids under applied loads.

In this paper we study the motion of screw dislocations in
cylindrical crystalline materials using a continuum model introduced by Cermelli and Gurtin
\cite{Gurtin}.  
One of our main contributions is the analytical validation to 
this model by proving local existence and uniqueness of solutions to the equations of motions for a system of dislocations.
In particular, we prove rigorously the phenomena of cross-slip and fine cross-slip. We refer to  the work
of Armano and Cermelli \cite{AC04,CA07} for the case of a single dislocation.

Following the work of Cermelli and Gurtin \cite{Gurtin}, we consider an elastic body $B:=\Omega \times \RR$,  where
$\Omega \subset \RR^2$ is a bounded simply connected open set with $C^{2,\alpha}$ boundary. 
The body $B$ undergoes \emph{antiplane shear} 
deformations $\Phi:B\to B$ of the form
\begin{equation*}
  \Phi(x_1,x_2,x_3) := (x_1,x_2,x_3 + u(x_1,x_2)),
\end{equation*}
with $u:\Omega\to\RR$. The deformation gradient ${\bf F}$ is given by
\begin{equation}\label{deform_grad}
  {\bf F} := \nabla \Phi =\left(
  \begin{array}{ccc}
    1 & 0 & 0\\ 0 & 1 & 0\\ \frac{\partial u}{\partial x_1}& \frac{\partial u}{\partial x_2} & 1
  \end{array}
\right) = {\bf I} + {\bf e}_3{\otimes }\left(
    \begin{array}{c}
      \nabla u \\ 0
    \end{array}
\right).
\end{equation}
The assumption of antiplane shear allows us to reduce the three-dimensional problem
to a two-dimensional problem. We will consider \emph{strain fields}  $\bh$
that are defined on the cross-section
$\Omega$, taking values in $\RR^2$. 
In the absence of dislocations, the strain $\bh$ is the gradient of a function,
$\bh = \nabla u$. If  dislocations are present, then the strain field is singular at the sites of the dislocations, and in the case of screw dislocations this will be a line singularity. In the antiplane shear setting, this line is parallel
to the $x_3$ axis and the screw dislocation 
is represented as a point singularity on the cross-section $\Omega$.

A screw dislocation is characterized by a position $\bz \in \Omega$ and a vector
$\bb\in \RR^3$, called the \emph{Burgers} vector. The position $\bz \in \Omega$
is a point where the strain field fails to be the gradient of a smooth function and
the Burgers vector measures the severity of this failure. To be precise,
a strain field associated with a system of $N$ screw dislocations at 
positions 

\begin{equation*}
\cZ:=\{\bz_1,\ldots,\bz_N\}\subset\Omega
\end{equation*} 
with corresponding Burgers vectors
\begin{equation*}\label{B}
\cB:=\{b_1\be_3,\ldots,b_N\be_3\}
\end{equation*}
 satisfies the relation
\begin{equation}
  \label{curl}
  \curl\bh=\sum_{i=1}^N b_i\delta_{\bz_i}\quad {\rm in}\,\, \Omega
\end{equation}
in the sense of distributions. Here $\curl\bh$ is
the scalar curl
$\frac{\partial h_2}{\partial x_1}-\frac{\partial h_1}{\partial x_2}$, $\delta_\bx$ is the Dirac mass at the point $\bx$, and the scalar $b_i$ is called the \emph{Burgers modulus}
for the dislocation at $\bz_i$, and in view of \eqref{curl} it is 
given by
  \begin{equation*}
    b_i=\int_{\ell_i}   {\bf h} \cdot {\bf t}\,\de s,
  \end{equation*}
where $\ell_i$ is any counterclockwise loop surrounding the
dislocation point ${\bf z}_i$ and no other dislocation points, 
${\bf t}$ is the tangent to $\ell_i$,
and $\de s$ is the line element.

When dislocations are present, \eqref{deform_grad} is replaced with
\begin{equation*}\label{sing_grad}
\bF=\bI+\be_3{\otimes}\vec{\bh}{0}.
\end{equation*}

To derive a motion law
for the system
of dislocations we need to introduce the free energy associated to the system. We work in the context of linear elasticity. 
The energy density $W$ 
is given by
\begin{equation*}\label{W_def}
  W({\bf h}) := \frac12 {\bf h} \cdot \bL\bh
\end{equation*}
 where the  
elasticity tensor ${\bf L}$ is
a symmetric, positive-definite matrix, which, in suitable coordinates,
can be written in terms of the 
Lam\'e moduli $\lambda,\mu$ of the material as
\begin{equation*}
  {\bf L} := \left(
    \begin{array}{cc}
      \mu & 0 \\ 0 & \mu \lambda^2
    \end{array}\right).
\end{equation*}
We require $\mu >0$, and 
the energy is isotropic if and only if $\lambda^2 = 1$. 
The energy of a strain field $\bh$ is given by
\begin{equation}\label{J_def}
J(\bh):=\int_\Omega W(\bh(\bx))\, \de\bx,
\end{equation}
and the equilibrium equation is
\begin{equation}
  \label{equilibrium}
  	\div\bL\bh=0 \quad {\rm in}\,\, \Omega.
\end{equation}
Equations \eqref{curl} and \eqref{equilibrium}
provide  a characterization of strain fields describing screw dislocation systems
in linearly elastic materials.
To be precise, we say that  a strain field $\bh \in L^2(\Omega; \R2)$ corresponds to
a {\emph {system of dislocations}} at the positions $\cZ$ with Burgers vectors $\cB$
 if $\bh$ satisfies 
\begin{equation}\label{curldiv}
        \left\{
          \begin{array}{l}
\curl\bh=\sum_{i=1}^N b_i\delta_{\bz_i}\\
	\div\bL\bh=0 
\end{array}\right.\quad {\rm in}\,\, \Omega,
\end{equation}
in the  sense of distributions.

In analogy to the theory of Ginzburg-{L}andau vortices \cite{BBH}, 
 no variational principle can be associated with \eqref{curldiv} 
because the elastic energy of a system of screw dislocations is not finite (see, e.g., \cite{Leoni, Gurtin, N67}),
therefore the study of \eqref{curldiv} cannot be undertaken in terms of energy minimization.
Indeed, the simultaneous requirements of finite energy and \eqref{curl} are incompatible,
since if $\curl \bh=\delta_{\bz_0}$, $\bz_0\in\Omega$, 
and if $B_\ep(\bz_0)\subset\subset\Omega$, then
\begin{equation*}
\int_{\Omega \setminus B_\ep(\bz_0)} |\bh|^2\de \bx = O(|\log \ep|).
\end{equation*}
In the engineering literature (see, e.g.,  \cite{Gurtin, N67}), this problem is usually overcome by regularizing the energy, namely, by replacing the energy $J$ in \eqref{J_def} 
with a new energy $J_\ep$ obtained by removing small cores of size $\ep>0$ centered at the dislocations points $\bz_i$. This allows to obtain finite-energy strains $\bh_\ep$ as minimizers of $J_\ep$. 
It was shown in \cite{BM14} that 
\begin{equation}\label{expansion}
J_\ep (\bh_\ep) = C |\log \ep| + U(\bz_1,\ldots,\bz_N) + O(\ep),
\end{equation}
where $U$ is the \emph{renormalized energy} associated with the limiting strain  $\bh_0 = \lim_{\ep\to 0} \bh_\ep$, 
satisfying \eqref{curldiv}. 

This type of asymptotic expansion was first proved by Bethuel, Brezis,
and  H{\'e}lein in \cite{BBH93} for  Ginzburg-{L}andau vortices. The
case of edge dislocations was studied in \cite{Leoni}. Asymptotic
expansions of the type  \eqref{expansion} can also be derived using
$\Gamma$-convergence techniques (see, e.g., \cite{AP14,SS03} and the
references therein for Ginzburg-{L}andau vortices,
\cite{DLGP12,GPPS13,GLP10}  for edge dislocations, and
\cite{ACP11,CG09,CGM11,FG07,GM05,GM06,SZ12} for other dislocations
models). Finally, it is important to mention that we ignore here the
\emph{core energy}, that is, 
the energy contribution proportional to $|\log \ep|$ in \eqref{expansion},
which comes from the small cores that were removed to obtain $J_\varepsilon$.
We refer  to \cite{N67,TOP96,VKHLO12} for a more detailed discussion of the core energy. 

The force on a dislocation at $\bz_i$ due to the elastic strain is called the
\emph{Peach-K\"ohler force}, and is denoted by $\bj_i$ (see \cite{Gurtin}, \cite{PK}). 
The renormalized energy $U$ is a function 
only of the positions $\{\bz_1,\ldots,\bz_N\}$ (and of the Burgers moduli),
and it is shown  in \cite{BM14} that 
its gradient with respect to $\bz_i$ gives the negative of the
Peach-K\"ohler  force on $\bz_i$. Specifically,
\begin{equation}\label{PK=DU}
\bj_i = -\nabla_{\bz_i}U=
\int_{\ell_i} \left\{ W(\bh_0){\bf I} - \bh_0 {\otimes}(\bL\bh_0) \right\}\bn \, \de s,
\end{equation}
where $\ell_i$ is a suitably chosen
loop around $\bz_i$ and $\bn$ is the outer unit 
normal to the set bounded by $\ell_i$ and containing $\bz_i$. 
The quantity $W(\bh_0){\bf I} - \bh_0 {\otimes}(\bL\bh_0)$
is the \emph{Eshelby stress tensor}, see \cite{Eshelby,Gurtin95}.

To study the motion of dislocations it is more convenient to rewrite $\bj_i$ in the form 
\begin{equation}\label{PKj}
\bj_i(\bz_i)=b_i \bJ \bL\Big[\sum_{j\neq i} \bk_j(\bz_i;\bz_j)+
\nabla u_0(\bz_i;\bz_1,\ldots,\bz_N)\Big]
\end{equation}
(see \cite{BM14} for a proof of this derivation).
Here 
$\bk_j(\cdot;\bz_j)$ is the \emph{fundamental singular strain} generated by 
the dislocation $\bz_j$, where
\begin{equation}
\label{k_def}
\bk_j(\bx;\by) := \frac{b_j}{2\pi} \frac{\lambda \bJ^T(\bx-\by)}{|\bLam(\bx-\by)|^2},
\qquad (\bx,\by)\in\R2{\times}\R2,\; \bx\neq\by,
\end{equation}
with 
\begin{align}\label{matrices}
\bJ := \left(\begin{array}{rr}
0 & 1\\ -1 & 0
\end{array} \right), \quad
\bLam := \left(\begin{array}{rr}
\lambda & 0\\ 0 & 1
\end{array} \right). 
\end{align}
Straightforward calculations show that, for $(\bx,\by)\in\RR^2{\times}\R2$, $\bx\neq\by$, we have
\begin{subequations}
	\begin{align}
	{\rm div}_{\by}( {\bf L}\nabla_{\by}\bk_j(\bx;\by)) 
	&= \bzero, \label{laplaciank}\\
	{\rm div}_{\bx}\left(\bL \bk_j (\bx;\by)\right)&=0, \label{divk}\\
\intertext{and, for $(\bx,\by)\in\R2{\times}\R2$,}
	\curl_\bx\bk_j (\bx;\by)&= b_j\delta_{\by}(\bx). \label{curlk}
	\end{align}
\end{subequations}
Also, for fixed $\bz_1,\ldots,\bz_N\in\Omega$, the function 
$u_0(\cdot;\bz_1,\ldots,\bz_N)$ is a solution of the Neumann problem
\begin{equation}\label{u0Neumann}
\left\{  \begin{array}{ll}
{\rm div}_{\bx}\left( {\bf L}\nabla_{\bx} u_0(\bx;\bz_1,\ldots,\bz_N)
\right) = 0,&  \bx\in\Omega, \\
{\bf L}\bigl(\nabla_{\bx} u_0(\bx;\bz_1,\ldots,\bz_N)
+ \sum_{i=1}^N{\bf k}_i (\bx;\bz_i)\bigr)\cdot \bn(\bx)\ = 0,&
\bx\in\partial \Omega.
\end{array}\right.
\end{equation}
The expression of \eqref{PKj} contains two contributions accounting for the two different kinds of forces acting on a dislocation when other
dislocations are present:
the interactions with the other dislocations and the interactions with $\partial \Omega$. The latter balances the tractions
 of the forces generated by all the dislocations. Indeed, the function  $\nabla u_0(\bx;\bz_1,\ldots,\bz_N)$ represents the elastic strain
 at the point $\bx \in \Omega$ due to the presence of $\partial \Omega$
 and the dislocations at $\bz_i$ 
 with Burgers moduli $b_i$. For this reason, 
 we refer to $\nabla u_0(\bx;\bz_1,\ldots,\bz_N)$  as the
\emph{boundary-response} strain at $\bx$ due to $\cZ$.

Following \cite{Gurtin}, we will 
assume the dislocations will move in the \emph{glide direction} that maximally dissipates 
the (renormalized) energy. The set of glide directions, $\cG:=\{\bg_1,\ldots,\bg_M\}$, is crystallographically determined
and is discrete. 

When many dislocations are present, the dynamics is non-trivial.
Dislocations whose Burgers moduli have the same sign will repel each other,
while attraction occurs if the Burgers moduli have opposite signs.
This can be seen by investigating \eqref{PKj} in the case of two dislocations, 
and extended to an arbitrary number of dislocations by superposition,  since  the system \eqref{curldiv} is linear.
In addition, because
 $\cG$ a discrete set, the motion need not be continuous with respect to the direction.
\emph{Cross-slip} and \emph{fine cross-slip} may occur whenever it is more convenient for the system to 
switch direction, in the former case,
or to bounce at a faster and faster time scale between two glide directions, in the latter.
In this last situation, macroscopically, a dislocation is able to move along a direction 
which is not in $\cG$,
but belongs to the convex hull  of two glide directions. We discuss
this in more detail in Section \ref{sec:slip}.

Since the direction of the motion of dislocations can change discontinuously and may not be uniquely determined, we cannot use the standard theory of ordinary differential equations to study the dynamics. Instead we will use differential inclusions  (see \cite{Filippov}).

We refer to \cite{AlDeGaPo13,CCM12,P12,V10,ZCA13} and the 
references contained therein for other results on the dynamics of
dislocations. 
In particular, it is important to point out that, due to the discrete set of glide directions and the
maximal dissipation criterion 
introduced in \cite{Gurtin95},
our analysis significantly departs from
that of Ginzburg-Landau vortices, where the motion of vortices
can be derived from a gradient flow
(see the review paper of Serfaty \cite{S11}, see also \cite{AlDeGaPo13}).

In forthcoming work and in collaboration with Thomas Hudson, we plan to study the behavior of dislocations as they approach the boundary
and at collisions. In particular, preliminary results show that dislocations are attracted to the boundary.

\par The structure of the paper is as follows.
Section \ref{sec:dynamics} addresses the dynamics for a system of dislocations: 
a brief introduction on differential inclusion is presented in Subsection \ref{sec:setting}, 
and the framework for the dynamics is presented in Subsection \ref{sec:disdyn}.
Local existence of the solutions to the 
dynamics problem is addressed in Subsection \ref{sec:localexistence},
while Subsection \ref{sec:unique} deals with local uniqueness of the solution.
A description of cross-slip and fine cross-slip is presented in Subsection \ref{sec:slip}, 
where we give analytic proofs of the scenarios presented in
\cite{Gurtin}.
In
Section~\ref{section intersections} we discuss the case of multiple dislocations simultaneously 
exhibiting fine cross-slip and provide  numerical simulations
of the dynamics.
Some special cases are discussed in Section \ref{sec:special}, namely the unit disk (Subsection \ref{sec:undisk}), 
the half-plane and the plane (Subsections \ref{sec:hpp},
\ref{sec:fullplane}), 
and finally the notion of mirror dislocations is introduced in Subsection \ref{sec:mirror}.
We collect some technical proofs in the appendix.

\section{Dislocation Dynamics}\label{sec:dynamics}

We now turn our attention to the dynamics of 
the system $\cZ$. As explained in the introduction, the direction of the motion of 
dislocations can change discontinuously and this motivates its study using differential inclusions. We begin this section with 
some preliminaries on the theory developed by Filippov \cite{Filippov}.
We introduce the setting for dislocation dynamics in Subsections \ref{sec:setting} and \ref{sec:disdyn},
and prove local existence and uniqueness in Subsections \ref{sec:localexistence}
and \ref{sec:unique}, respectively.

\subsection{Preliminaries on Differential Inclusions}\label{sec:setting}
The theory developed by Filippov \cite{Filippov} 
provides a notion of solution to an ordinary differential inclusion.
Given an interval $I$ and a set-valued function $H:D\to\cP(\R{d})$,
where  $D\subset\RR^{d+1}$   and $\cP(\R{d})$ is the power set of $\R{d}$,
a {\emph {solution}} on $I$ of the differential inclusion 
\begin{equation}\label{f014}
\dot \bx\in H(t,\bx)
\end{equation}
is an absolutely continuous function $\bx:I\to \R{d}$
such that $(t,\bx(t))\in D$ 
and $\dot \bx (t) \in H(t,\bx(t))$ for almost every $t\in I$.
%

In order to state a local existence theorem for \eqref{f014}, 
we need to introduce the definition of \emph{continuity} 
for a set valued map (see \cite{Filippov}).
Given two nonempty sets $A,B\subseteq\R{d}$, 
we recall that the Hausdorff distance between $A$ and $B$ is given by
\begin{equation*}
d_\cH(A,B):=\max\Big\{\sup_{\ba\in A}\dist(\ba,B),\sup_{\bb\in B}\dist(\bb,A)\Big\}.
\end{equation*}
\begin{remark}\label{f107}
In the special case in which the sets $A$ and $B$ are cartesian products, that is, $A=A_1{\times}A_2\subseteq\R{d_1}\times\R{d_2}$ and $B=B_1{\times}B_2\subseteq\R{d_1}\times\R{d_2}$, we have that
\begin{equation}\label{f108}
d_\cH(A,B)\leq d_\cH(A_1,B_1)+d_\cH(A_2,B_2).
\end{equation}
To see this, let $\ba=(\ba_1,\ba_2)\in A$ and fix $\ep>0$. 
Then there exist $\bb_1^\ep\in B_1$ and $\bb_2^\ep\in B_2$ such that
\begin{equation*}
||\ba_i-\bb_i^\ep||\leq \dist(\ba_i,B_i)+\ep\qquad\text{for $i=1,2$.}
\end{equation*}
Since $\bb^\ep:=(\bb_1^\ep,\bb_2^\ep)\in B$, we have that 
\begin{equation*}
\dist(\ba,B)\leq||\ba-\bb^\ep||\leq||\ba_1-\bb_1^\ep||+||\ba_2-\bb_2^\ep||\leq\dist(\ba_1,B_1)+\dist(\ba_2,B_2)+2\ep.
\end{equation*}
Letting $\ep\to0$ and taking the supremum over all $\ba\in A$, it follows that
\begin{equation*}
\begin{split}
\sup_{\ba\in A}\dist(\ba,B)&\leq\sup_{\ba_1\in A_1}\dist(\ba_1,B_1)+\sup_{\ba_2\in A_2}\dist(\ba_2,B_2) \\
&\leq d_\cH(A_1,B_1)+d_\cH(A_2,B_2).
\end{split}
\end{equation*}
By exchanging the roles of $A$ and $B$, we obtain \eqref{f108}.
\end{remark}

\begin{definition}[Continuity and Upper Semicontinuity]\label{f103}
Given $D\subset\R{d+1}$ and a set-valued function $H:D \to\cP(\R{d})$, 
we say that $H$ is \emph{continuous} if
\begin{equation*}
d_\cH(H(\by_n),H(\by))\to0 \qquad\text{for every $\by,\by_n\in D$ such that $\by_n\to \by$.}
\end{equation*}
We say that $H$ is \emph{upper semicontinuous} if 
\begin{equation*}
\sup_{\ba\in H(\by_n)}\dist(\ba,H(\by)) \to 0 
\qquad\text{for every $\by,\by_n\in D$ such that $\by_n\to \by$.}
\end{equation*}
\end{definition}
It follows from the definition that any continuous set-valued function is upper semicontinuous. 

The proof of the following theorem can be found in \cite[pg.\@ 77]{Filippov}.
\begin{theorem}[Local Existence]\label{thm:FilippovEx}
Let $D\subset \RR^{d+1}$ be open and let $H:D\to \mathcal{P}(\RR^{d})$
be upper semicontinuous, and such that $H(t,\bx)$ is nonempty, closed, bounded, and convex
for every $(t,\bx)\in D$. Then for every $(t_0,\bx_0) \in D$ there exist  $h>0$ and 
a solution $\bx:[t_0-h,t_0+h]\to \RR^d$ of the problem
\begin{equation}
\label{inclusion1}
\dot \bx (t) \in H(t,\bx(t)),\qquad \bx(t_0) = \bx_0.
\end{equation}
Moreover, if $D$ contains a cylinder $C:=[t_0-T,t_0+T]\times B_r(\bx_0)$, for some $r, T>0$, then
$h\ge \min\{T,r/m\}$, where
$m := \sup_{(t,\bx)\in C} |H(t,\bx)|$.
\end{theorem}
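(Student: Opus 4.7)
The plan is to follow Filippov's classical construction via polygonal Euler approximations combined with a compactness/upper semicontinuity closure argument. First I would reduce to the case where we have a cylinder $C = [t_0-T,t_0+T] \times \overline{B_r(\bx_0)} \subset D$ (shrinking $T,r$ if necessary since $D$ is open), and verify that $m := \sup_{(t,\bx) \in C} |H(t,\bx)| < \infty$. This boundedness follows because $H$ is upper semicontinuous with compact values on the compact set $C$: indeed, $\{H(t,\bx) : (t,\bx) \in C\}$ is relatively compact in the Hausdorff sense and hence uniformly bounded (otherwise one could extract a sequence violating upper semicontinuity at a cluster point). Set $h := \min\{T, r/m\}$.

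Next I would construct, for each positive integer $n$, a piecewise-affine approximate solution $\bx_n$ on $[t_0, t_0 + h]$ by the Euler scheme: partition the interval into $n$ equal pieces with nodes $t_0 = s_0 < s_1 < \cdots < s_n = t_0 + h$, set $\bx_n(s_0) := \bx_0$, choose any $\bv_k \in H(s_k, \bx_n(s_k))$ (nonempty by hypothesis), and extend linearly with slope $\bv_k$ on $[s_k, s_{k+1}]$. An induction using $|\bv_k| \le m$ and $h \le r/m$ guarantees $\bx_n(t) \in \overline{B_r(\bx_0)}$ for all $t \in [t_0, t_0+h]$, so the construction stays inside $C$; performing the analogous backward construction handles $[t_0-h, t_0]$.

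I would then invoke Arzelà--Ascoli: the family $\{\bx_n\}$ is uniformly bounded and $m$-Lipschitz, so some subsequence converges uniformly to an $m$-Lipschitz (hence absolutely continuous) limit $\bx$ with $\bx(t_0) = \bx_0$. The derivatives $\{\dot\bx_n\}$ are bounded in $L^\infty$, and by the Banach--Alaoglu theorem a further subsequence converges weakly-$\ast$ in $L^\infty$ (equivalently, weakly in $L^2$) to $\dot\bx$.

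The main obstacle, as always in Filippov-type arguments, is verifying $\dot\bx(t) \in H(t,\bx(t))$ for a.e.\ $t$. For each $t$ in the open intervals, $\dot\bx_n(t) = \bv_{k(n,t)} \in H(s_{k(n,t)}, \bx_n(s_{k(n,t)}))$ with $|s_{k(n,t)} - t| \le h/n$ and $|\bx_n(s_{k(n,t)}) - \bx(t)| \to 0$. Upper semicontinuity then implies that for every $\delta > 0$ and all sufficiently large $n$, $\dot\bx_n(t) \in H(t,\bx(t)) + \overline{B_\delta(\bzero)}$. Using Mazur's lemma, $\dot\bx(t)$ lies in the closed convex hull of the tail $\{\dot\bx_k(t) : k \ge n\}$ for a.e.\ $t$; combined with the $\delta$-approximate inclusion, closedness, and convexity of $H(t,\bx(t))$, this yields $\dot\bx(t) \in H(t,\bx(t))$ a.e. The quantitative estimate $h \ge \min\{T, r/m\}$ is built into the construction, and the asserted uniqueness-free local existence conclusion follows.
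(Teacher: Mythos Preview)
Your argument is correct and is essentially the classical Euler--polygon/compactness proof due to Filippov. Note, however, that the paper does not supply its own proof of this theorem: it simply cites \cite[pg.~77]{Filippov}, so there is no in-paper argument to compare against beyond observing that what you have written is the standard proof one finds at that reference. One small point worth tightening in your Mazur step: weak convergence of $\dot\bx_n$ does not directly give a pointwise statement about $\dot\bx(t)$ lying in the closed convex hull of tails; rather, Mazur furnishes convex combinations $\by_n$ of the $\dot\bx_k$ (with $k\ge n$) converging strongly, hence a.e.\ along a subsequence, and it is these $\by_n(t)$ that one traps in $H(t,\bx(t))+\overline{B_\delta(\bzero)}$ using convexity before passing to the limit. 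With that clarification the sketch is complete.
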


Next we address uniqueness of solutions to \eqref{inclusion1}. We say that \emph{right uniqueness} holds for \eqref{inclusion1} at a point $(t_0,\bx_0)$ if there exists $t_1>t_0$ such that any two solutions to the Cauchy problem \eqref{inclusion1} coincide on the subset of $[t_0,t_1]$ on which they are both defined. Similarly, we say that \emph{left uniqueness} holds for \eqref{inclusion1} at a point $(t_0,\bx_0)$ if there exists $t_1<t_0$ such that any two solutions to the Cauchy problem \eqref{inclusion1} coincide on the subset  of $[t_1,t_0]$ on which they are both defined.
We we say that \emph{uniqueness} holds for \eqref{inclusion1} at a point $(t_0,\bx_0)$ if both left and right uniqueness hold for \eqref{inclusion1} at  $(t_0,\bx_0)$.

Unlike the case of ordinary differential equations, for differential inclusions the question of uniqueness is significantly more delicate  We will consider here a very special case. 
Suppose that $V\subset \RR^d$ is an open set and is separated into open domains
$V^\pm$ by a $(d-1)$-dimensional $C^2$ surface $S$. Let 
$\bef:(a,b)\times (V\setminus S)\to \RR^d$, and define $\bef^\pm: (a,b)\times V^\pm \to \RR^d$ as 
$\bef^\pm(t,\bx):= \bef(t,\bx)$ for $\bx \in V^\pm$.
Assume that 
$\bef^\pm$ can both be extended in a $C^1$ way to $(a,b)\times V$, and denote these extensions by $\widehat \bef^\pm$.
Define 
\begin{equation}\label{901}
H(t,\bx) :=\left\{ \begin{array}{lr}
\{\bef(t,\bx)\}  & \text{for }\bx \notin S, \\
{\rm co} \{\widehat\bef^-(t,\bx),\widehat\bef^+(t,\bx)\}  & \text{for }\bx \in S, 
\end{array} \right.
\end{equation}
and consider the differential inclusion \eqref{inclusion1}. 
Here for a set $E\subset\mathbb{R}^d$ we denote by $\text{co}E$ the convex hull of $E$, that is, the smallest convex set that contains $E$. 

It can be shown that the function $H$ defined in \eqref{901} satisfies the  conditions of Theorem \ref{thm:FilippovEx},
and local existence follows. 
In the following theorems, we denote by
$\bn(\bx_0)$ the unit normal to $S$ at $\bx_0\in S$ directed from $V^-$ to $V^+$. 
The following theorem can be found in \cite[pg.\@ 110]{Filippov}.

\begin{theorem}[Local Uniqueness]\label{thm:FilippovUn}
Let $H:(a,b)\times V\to\cP(\R{d})$ be given as in \eqref{901}, 
where $\bef$, $V$, and $S$ are as above. 
If $(t_0,\bx_0)\in(a,b)\times S$ is such that 
$\widehat\bef^-(t_0,\bx_0)\cdot\bn(\bx_0)>0$ or $\widehat\bef^+(t_0,\bx_0)\cdot\bn(\bx_0)<0$,
then right uniqueness holds for \eqref{inclusion1} at the point $(t_0,\bx_0)$.

Similarly, if $\widehat\bef^-(t_0,\bx_0)\cdot\bn(\bx_0)<0$ or $\widehat\bef^+(t_0,\bx_0)\cdot\bn(\bx_0)>0$,
then left uniqueness holds for \eqref{inclusion1} at the point $(t_0,\bx_0)$.

\end{theorem}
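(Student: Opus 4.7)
The plan is to flatten $S$ locally near $(t_0,\bx_0)$ and monitor the signed distance to $S$ along any solution of \eqref{inclusion1}. I would first reduce to the case $\widehat\bef^-(t_0,\bx_0)\cdot\bn(\bx_0)>0$, since the alternative condition on $\widehat\bef^+$ is obtained by swapping $V^-\leftrightarrow V^+$ and reversing $\bn$, and left uniqueness reduces to right uniqueness via the time reversal $t\mapsto 2t_0-t$. A $C^2$ diffeomorphism then straightens $S$ locally to $\{x_d=0\}$, mapping $V^\pm$ into $\{\pm x_d>0\}$, $\bx_0$ to the origin, and $\bn(\bx_0)$ to $\be_d$; since this transformation preserves the structure of the inclusion, I assume the flattened picture from the outset. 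Continuity of the $C^1$ extensions furnishes $h,r,c>0$ with $\widehat\bef^-(t,\bx)\cdot\be_d\geq c$ on the cylinder $[t_0-h,t_0+h]\times B_r(\bzero)$.

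The key first step is to show that no solution $\bx(\cdot)$ with $\bx(t_0)=\bzero$ can dip into $V^-$ for $t>t_0$. Setting $\phi(t):=\bx(t)\cdot\be_d$, suppose for contradiction that $\phi(t_1)<0$ for some $t_1\in(t_0,t_0+h]$, and let $t^*:=\sup\{t\in[t_0,t_1]:\phi(t)\geq 0\}$. Then $\phi(t^*)=0$ and $\bx(t)\in V^-$ on $(t^*,t_1]$, where the inclusion collapses to $\dot\bx=\widehat\bef^-(t,\bx)$ and thus $\dot\phi\geq c$ a.e. Integrating would give $\phi(t_1)\geq c(t_1-t^*)>0$, contradicting $\phi(t_1)<0$. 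Hence $\bx(t)\in V^+\cup S$ on some interval $[t_0,t_0+\delta]$.

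I would then split by the sign of $\widehat\bef^+(t_0,\bzero)\cdot\be_d$. If this quantity is positive, by continuity $\widehat\bef^+\cdot\be_d>0$ throughout the cylinder, so once $\bx(t)$ enters $V^+$ the fact $\dot\phi=\widehat\bef^+\cdot\be_d>0$ prevents return to $S$. Setting $\tau:=\inf\{t>t_0:\bx(t)\in V^+\}$, the alternative $\tau>t_0$ would force $\bx(t)\in S$ on $[t_0,\tau]$ and hence $\phi\equiv 0$, $\dot\phi=0$ a.e.\ there, yet $\dot\phi\in\mathrm{co}\{\widehat\bef^-\cdot\be_d,\widehat\bef^+\cdot\be_d\}$ is strictly positive --- contradiction. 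So $\tau=t_0$ and $\bx$ is the unique classical solution of the $C^1$ Cauchy problem $\dot\bx=\widehat\bef^+(t,\bx)$, $\bx(t_0)=\bzero$. If instead $\widehat\bef^+(t_0,\bzero)\cdot\be_d<0$, any maximal interval $(a,b)\subset[t_0,t_0+\delta]$ with $\bx(t)\in V^+$ would give $\phi(a)=0$ and $\dot\phi<0$ on $(a,b)$, contradicting $\phi>0$ there. Thus $\bx(t)\in S$ throughout, so $\dot\phi=0$ a.e.; writing $\dot\bx=\alpha\widehat\bef^-+(1-\alpha)\widehat\bef^+$ with $\alpha\in[0,1]$, this constraint forces
\begin{equation*}
\alpha(t,\bx)=\frac{\widehat\bef^+(t,\bx)\cdot\be_d}{(\widehat\bef^+-\widehat\bef^-)(t,\bx)\cdot\be_d},
\end{equation*}
a $C^1$ function since the denominator is bounded away from zero. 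The \emph{sliding field} $\bg:=\alpha\widehat\bef^-+(1-\alpha)\widehat\bef^+$ on $S$ is $C^1$, so $\dot\bx=\bg(t,\bx)$ with $\bx(t_0)=\bzero$ has a unique solution by Cauchy-Lipschitz.

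The main obstacle is the borderline configuration $\widehat\bef^+(t_0,\bx_0)\cdot\bn(\bx_0)=0$: neither the crossing argument (which wants $\widehat\bef^+\cdot\bn>0$ uniformly in a neighborhood) nor the pure sliding argument (which wants $\widehat\bef^+\cdot\bn<0$ to exclude $V^+$-excursions) is uniform there. Resolving it requires a finer comparison relying solely on the one-sided bound $\widehat\bef^-\cdot\bn\geq c$, showing that excursions into $V^+$ have arbitrarily small total measure and that the sliding identity for $\alpha$ is still recovered by continuity.
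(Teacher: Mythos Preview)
The paper does not prove this theorem; it simply cites page 110 of Filippov's monograph. So there is no in-paper argument to compare against, and your proposal amounts to a reconstruction of Filippov's proof.

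Your reductions (swapping $V^\pm$, time reversal for left uniqueness, $C^2$ flattening of $S$) are correct, as is the argument that no solution can enter $V^-$ once $\widehat\bef^-\cdot\bn>0$. The two strict-sign subcases are handled cleanly: when $\widehat\bef^+\cdot\bn>0$ the solution enters $V^+$ immediately and is the unique flow of the $C^1$ field $\widehat\bef^+$; when $\widehat\bef^+\cdot\bn<0$ the solution is trapped on $S$ and follows the $C^1$ sliding field. These are exactly the situations the paper treats separately as cross-slip (Theorem~\ref{cs001}) and fine cross-slip (Corollary~\ref{cor:f0}), and the paper's proofs of those results, together with Lemma~\ref{lem:pass}, proceed in the same spirit as yours.

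The genuine gap is the borderline case $\widehat\bef^+(t_0,\bx_0)\cdot\bn(\bx_0)=0$, which you flag but do not resolve. Your proposed fix---that excursions into $V^+$ have small total measure and that the sliding coefficient $\alpha$ is ``recovered by continuity''---does not yield uniqueness: two solutions could make their $V^+$-excursions on disjoint time sets and thereby separate, even if each individually spends almost all its time on $S$. The underlying issue is methodological: your strategy is to first identify \emph{where} the trajectory lives (in $V^+$, or on $S$) and only then invoke Cauchy--Lipschitz, but at the borderline neither region is invariant and this identification step fails. Filippov's argument avoids the trichotomy by comparing two solutions $\bx_1,\bx_2$ directly: once both are confined to $\overline{V^+}$, one writes $\dot\bx_i=\widehat\bef^+(t,\bx_i)+\alpha_i(\widehat\bef^--\widehat\bef^+)(t,\bx_i)$ with the complementarity $\alpha_i\phi_i=0$, and derives a Gronwall bound on $|\bx_1-\bx_2|^2$ using only $\phi_i\geq 0$ and $\widehat\bef^-\cdot\be_d\geq c>0$, with no hypothesis on the sign of $\widehat\bef^+\cdot\be_d$. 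That direct comparison is what your sketch is missing.
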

 Next we discuss cross-slip and fine cross-slip. 

\begin{theorem}[Cross-Slip; \cite{Filippov} Corollary 1, p.107]\label{cs001}
Let $(t_0,\bx_0)\in(a,b)\times S$ be such that 
$\widehat\bef^-(t_0,\bx_0)\cdot\bn(\bx_0)>0$ and $\widehat\bef^+(t_0,\bx_0)\cdot\bn(\bx_0)>0$.
Then uniqueness holds for \eqref{inclusion1} at the point $(t_0,\bx_0)$. Moreover, the unique solution 
$\bx$ to \eqref{inclusion1} passes from $V^-$ to $V^+$, that is, there exist $t_1<t_0<t_2$ such that $\bx(t)$ belongs to $V^-$ for $t\in[t_1,t_0)$ and to 
$V^-$ for $t\in (t_0,t_1]$. 
Similarly, if $\widehat\bef^-(t_0,\bx_0)\cdot\bn(\bx_0)<0$ and 
$\widehat\bef^+(t_0,\bx_0)\cdot\bn(\bx_0)<0$, then uniqueness holds for \eqref{inclusion1} at the point $(t_0,\bx_0)$ and the unique solution passes from $V^+$ to $V^-$.
\end{theorem}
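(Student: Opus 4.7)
The plan is to split the proof into two parts: (i) uniqueness, via direct application of Theorem \ref{thm:FilippovUn}, and (ii) construction of a transiting solution by gluing two classical Cauchy problems associated with the $C^1$ extensions $\widehat\bef^\pm$. For uniqueness, the hypothesis $\widehat\bef^-(t_0,\bx_0)\cdot\bn(\bx_0)>0$ is exactly the first clause of Theorem \ref{thm:FilippovUn}, hence right uniqueness holds; simultaneously $\widehat\bef^+(t_0,\bx_0)\cdot\bn(\bx_0)>0$ is the second clause, yielding left uniqueness. Together these give uniqueness at $(t_0,\bx_0)$.

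For the construction, I would solve the two classical ODEs $\dot\by^\pm = \widehat\bef^\pm(t,\by^\pm)$ with $\by^\pm(t_0)=\bx_0$ on an interval $[t_0-\delta,t_0+\delta]$, admissible by Picard--Lindel\"of since $\widehat\bef^\pm\in C^1((a,b)\times V)$. Setting
\begin{equation*}
\bx(t):=\begin{cases} \by^-(t) & t\in[t_0-\delta,t_0],\\ \by^+(t) & t\in[t_0,t_0+\delta], \end{cases}
\end{equation*}
produces an absolutely continuous, piecewise $C^1$ candidate. To confirm that it satisfies $\dot\bx(t)\in H(t,\bx(t))$ almost everywhere, one must verify that $\by^-(t)\in V^-$ for $t\in[t_0-\delta,t_0)$ and $\by^+(t)\in V^+$ for $t\in(t_0,t_0+\delta]$, so that the inclusion reduces to the pointwise equality $\dot\bx=\bef=\widehat\bef^\pm$ on each side of $S$, where $H$ is a singleton. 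For this step I would introduce a $C^2$ signed distance $\phi$ to $S$ in a tubular neighborhood of $\bx_0$ (available because $S$ is a $C^2$ surface), normalized so that $\phi>0$ on $V^+$, $\phi<0$ on $V^-$, and $\nabla\phi(\bx_0)=\bn(\bx_0)$. The chain rule gives
\begin{equation*}
\frac{d}{dt}\phi(\by^\pm(t))\Big|_{t=t_0} = \bn(\bx_0)\cdot\widehat\bef^\pm(t_0,\bx_0) > 0,
\end{equation*}
so, since $\phi(\by^\pm(t_0))=0$, after possibly shrinking $\delta$ one has $\phi(\by^+(t))>0$ for $t\in(t_0,t_0+\delta]$ and $\phi(\by^-(t))<0$ for $t\in[t_0-\delta,t_0)$, which is the required membership in $V^\pm$.

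The main subtlety I anticipate is precisely this transversality step: if any of the inner products with $\bn$ were zero instead of strictly positive, a trajectory could stick to $S$ and the convex-hull definition of $H$ would have to be engaged nontrivially; strict positivity combined with $C^1$ regularity of $\widehat\bef^\pm$ rules that out cleanly. The symmetric statement with both inner products negative follows from the case just treated by the time reversal $t\mapsto 2t_0-t$, which swaps $V^+$ and $V^-$ and flips the signs of the relevant inner products, so that the reversed problem falls under the positive case.
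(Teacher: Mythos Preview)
The paper does not supply its own proof of this theorem: it is quoted verbatim as Corollary~1, p.~107 of \cite{Filippov}, and then used as a black box (e.g.\ in the proof of Theorem~\ref{cs003}). So there is no paper proof to compare against; your argument must be judged on its own.

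Your argument is correct. Part~(i) is an immediate consequence of Theorem~\ref{thm:FilippovUn}, exactly as you say. Part~(ii)---gluing the two Picard--Lindel\"of solutions and checking they lie on the correct side of $S$ via a transversality computation---is the standard route and is carried out cleanly. Your use of a $C^2$ signed distance $\phi$ is a minor stylistic variant of what the paper does in the nearby Lemma~\ref{lem:pass}, where instead $S$ is written locally as a graph $y=\Phi(\bxi)$ with $\nabla\Phi(\bxi_0)=\bzero$ and the same first-order Taylor estimate is performed on $y(t)-\Phi(\bxi(t))$. Both devices encode the same idea: the strict sign of $\widehat\bef^\pm(t_0,\bx_0)\cdot\bn(\bx_0)$ forces $\phi(\by^\pm(t))$ (equivalently $y(t)-\Phi(\bxi(t))$) to have a definite sign on a one-sided neighborhood of $t_0$. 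The time-reversal reduction for the second case is also fine.

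One small remark: having established uniqueness in~(i), you do not strictly need to \emph{construct} a transiting solution in~(ii); existence of some local solution is already granted by Theorem~\ref{thm:FilippovEx}, and the transversality computation applied to that solution (rather than to the auxiliary $\by^\pm$) would show it must cross from $V^-$ to $V^+$. Your explicit gluing is perfectly valid, just slightly more than is needed.
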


\begin{theorem}[\cite{Filippov} Corollary 2, p.108]\label{cs002}
Let $(t_0,\bx_0)\in(a,b)\times S$ be such that 
\begin{equation}\label{aaa}
\widehat\bef^-(t_0,\bx_0)\cdot\bn(\bx_0)>0\qquad\text{and}\qquad\widehat\bef^+(t_0,\bx_0)\cdot\bn(\bx_0)<0.
\end{equation}
Then there exists $a\le t_1<t_0$ such that
the problem 
\eqref{f014} admits exactly one solution curve
$\bx^{-}$ with
$\bx^- (t)\in V^-$ 
for $t\in (t_1,t_0)$
and $\bx^-(t_0) = \bx_0$, and 
exactly one solution curve
$\bx^+$ with
$\bx^+ (t)\in V^+$ 
for $t\in (t_1,t_0)$
and $\bx^+(t_0) = \bx_0$.
\end{theorem}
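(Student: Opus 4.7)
The plan is to work separately on the $V^-$ and $V^+$ branches; by symmetry it suffices to construct and characterize $\bx^-$, and then repeat the argument with sign changes for $\bx^+$. The key reduction is that whenever $\bx \in V^-$, one has $H(t,\bx) = \{\bef(t,\bx)\} = \{\widehat\bef^-(t,\bx)\}$ by \eqref{901}, so any Filippov solution of \eqref{f014} lying in $V^-$ on a subinterval is automatically a classical $C^1$ solution of the ordinary differential equation $\dot\bx = \widehat\bef^-(t,\bx)$, whose right-hand side is $C^1$ on all of $(a,b)\times V$ by assumption. In this way both existence and uniqueness for the $V^-$ branch of the inclusion collapse onto classical Picard--Lindel\"of theory applied backward from the terminal condition $\bx(t_0) = \bx_0$.

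For existence, Picard--Lindel\"of produces a $C^1$ solution $\bx^-$ of $\dot\bx = \widehat\bef^-(t,\bx)$ with $\bx^-(t_0) = \bx_0$, defined on some $[t_0-\delta, t_0+\delta]$. The issue is to verify that $\bx^-(t)$ actually lies in $V^-$ for $t$ slightly less than $t_0$; here is where the strict transversality in \eqref{aaa} is used. I would fix a local $C^2$ defining function $\varphi$ for $S$ near $\bx_0$, normalized so that $\varphi < 0$ on $V^-$, $\varphi > 0$ on $V^+$, and $\nabla\varphi(\bx_0) = \bn(\bx_0)$, and then compute
\begin{equation*}
\frac{\de}{\de t}\bigl(\varphi\circ\bx^-\bigr)(t_0) \;=\; \bn(\bx_0)\cdot\widehat\bef^-(t_0,\bx_0) \;>\; 0.
\end{equation*}
Since $(\varphi\circ\bx^-)(t_0)=0$ and this derivative is strictly positive, $\varphi\circ\bx^- < 0$ on some $(t_1, t_0)$, i.e., $\bx^-(t) \in V^-$ there.

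For uniqueness, suppose $\tilde\bx$ is any Filippov solution of \eqref{f014} with $\tilde\bx(t_0) = \bx_0$ and $\tilde\bx(t) \in V^-$ for $t \in (t_1', t_0)$. The reduction above forces $\dot{\tilde\bx} = \widehat\bef^-(t,\tilde\bx)$ almost everywhere on that interval, and the continuity of $\widehat\bef^-$ upgrades $\tilde\bx$ to $C^1$; Picard--Lindel\"of uniqueness applied backward from $(t_0,\bx_0)$ then identifies $\tilde\bx$ with $\bx^-$ on a left neighborhood of $t_0$. The corresponding construction of $\bx^+$ is identical after swapping $V^-$ for $V^+$ and $\widehat\bef^-$ for $\widehat\bef^+$: the reversed sign condition $\widehat\bef^+(t_0,\bx_0)\cdot\bn(\bx_0) < 0$ in \eqref{aaa} makes $\varphi\circ\bx^+$ strictly decreasing at $t_0$, hence strictly positive on $(t_1, t_0)$, so $\bx^+(t) \in V^+$ there. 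Taking $t_1$ to be the smaller of the two values produced confirms the statement. The only real obstacle I anticipate is this side-preservation step---verifying that the classical trajectory enters the correct open half-space immediately---and it is resolved exactly by the strict transversality hypothesis \eqref{aaa} through the sign of $(\varphi\circ\bx^\pm)'(t_0)$.
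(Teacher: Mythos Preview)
The paper does not supply its own proof of this statement: Theorem~\ref{cs002} is quoted as Corollary~2, p.~108 of \cite{Filippov}, and no argument is given. Your proposal is correct and is essentially the standard argument one finds in Filippov's book; the reduction to the classical $C^1$ ODE $\dot\bx=\widehat\bef^\pm(t,\bx)$ on each side, together with the transversality computation $(\varphi\circ\bx^\pm)'(t_0)=\bn(\bx_0)\cdot\widehat\bef^\pm(t_0,\bx_0)$ to force the trajectory into the correct open region for $t<t_0$, is exactly the right mechanism.

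It is worth noting that your side-preservation step mirrors precisely what the paper does in its proof of Lemma~\ref{lem:pass} (which treats the forward-time analogue): there the surface $S$ is written locally as a graph $y=\Phi(\bxi)$, and the inequality $y(t)<\Phi(\bxi(t))$ is obtained by integrating the normal component of the vector field. Your use of a signed defining function $\varphi$ is a slightly cleaner packaging of the same idea. One minor technical point in your uniqueness paragraph: you apply Picard--Lindel\"of backward from the endpoint $t_0$, where $\tilde\bx(t_0)=\bx_0\in S$ and the Filippov inclusion is multivalued. This is fine because $\widehat\bef^-$ is $C^1$ on all of $(a,b)\times V$, so the classical ODE $\dot\bx=\widehat\bef^-(t,\bx)$ has a well-posed terminal-value problem at $(t_0,\bx_0)$ regardless of what $H$ does there; you are comparing two Carath\'eodory solutions of that smooth ODE, not of the inclusion.
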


\begin{lemma}\label{lem:pass}
Assume that the conditions \eqref{aaa} hold for $(t_0,\bx_0)\in (a,b)\times S$.
Let $\bx(t)$ be a solution to
$\dot \bx = \widehat{\bef}^+(t,\bx)$ on an interval $[t_0,T]$ with 
$\bx(t_0)=\bx_0\in S$.
Then there exists  $\delta>0$ such that 
$\bx(t)\in V^-\cap U$ for $t\in (t_0,t_0+\delta)$. 
Similarly, if 
$\dot \bx = \widehat{\bef}^-(t,\bx)$ 
on an interval $[t_0,T]$ with $\bx(t_0)=\bx_0\in S$,
then there exists  $\delta>0$ 
such that $\bx(t)\in V^+\cap U$ for $t\in (t_0,t_0+\delta)$. 
\end{lemma}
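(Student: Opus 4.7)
The plan is to reduce the vector statement to a scalar statement by composing the trajectory with a local defining function for $S$, then reading off the sign of the derivative at $t_0$ directly from the hypothesis. Because $\widehat\bef^+$ is $C^1$ on $(a,b)\times V$ (by construction of the extensions) and $S$ is a $C^2$ surface separating $V^\pm$, all the regularity we need is already available.

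First I would shrink to a neighborhood $U$ of $\bx_0$ on which $S$ is given as the zero level set of a $C^2$ function $\phi:U\to\RR$ with $\phi>0$ on $V^+\cap U$, $\phi<0$ on $V^-\cap U$, and $\nabla\phi$ nonvanishing on $S\cap U$. This gives $\bn(\bx)=\nabla\phi(\bx)/|\nabla\phi(\bx)|$ for every $\bx\in S\cap U$. The existence of such a $\phi$ is standard since $S$ is a $C^2$ hypersurface separating $V^-$ from $V^+$.

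Next, I would define $g(t):=\phi(\bx(t))$ for $t$ in a small right-neighborhood of $t_0$, using that $\bx(t)\in U$ for $t$ close to $t_0$ by continuity of $\bx$. Since $\widehat\bef^+$ is $C^1$ on $(a,b)\times V$, the solution $\bx$ is $C^1$ and hence so is $g$. Evaluating at $t_0$ and using $\bx(t_0)=\bx_0\in S$,
\begin{equation*}
g'(t_0)=\nabla\phi(\bx_0)\cdot\dot\bx(t_0)=\nabla\phi(\bx_0)\cdot\widehat\bef^+(t_0,\bx_0)=|\nabla\phi(\bx_0)|\,\bn(\bx_0)\cdot\widehat\bef^+(t_0,\bx_0)<0,
\end{equation*}
the last inequality being the second half of hypothesis \eqref{aaa}. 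Since $g(t_0)=0$ and $g'(t_0)<0$, continuity of $g'$ near $t_0$ gives some $\delta>0$ (which we may also take small enough so that $\bx(t)\in U$ for $t\in[t_0,t_0+\delta)$) on which $g(t)<0$, i.e., $\bx(t)\in V^-\cap U$ for $t\in(t_0,t_0+\delta)$, as required. The symmetric claim for $\widehat\bef^-$ follows verbatim with reversed signs: the first half of \eqref{aaa} yields $g'(t_0)>0$ and hence $\bx(t)\in V^+\cap U$ on $(t_0,t_0+\delta)$.

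I do not foresee a genuine obstacle: the argument is essentially a first-order Taylor expansion transverse to $S$, and every ingredient (the $C^2$ regularity of $S$ that lets us pick $\phi$, the $C^1$ extensions $\widehat\bef^\pm$ that give $C^1$ flow, and the strict sign in \eqref{aaa}) is supplied by the hypotheses. The one spot worth being careful about is identifying $\nabla\phi/|\nabla\phi|$ with the prescribed normal $\bn$ pointing from $V^-$ into $V^+$; this is forced once we choose the sign convention $\phi>0$ on $V^+$, and it is what converts the inner product with $\bn$ in \eqref{aaa} into the sign of $g'(t_0)$.
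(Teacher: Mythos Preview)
Your proof is correct and follows essentially the same strategy as the paper: reduce to a scalar function measuring signed position relative to $S$ and read off the sign from the hypothesis \eqref{aaa}. The paper carries this out in explicit graph coordinates (writing $S$ locally as $y=\Phi(\bxi)$, rotating so $\nabla\Phi(\bxi_0)=\bzero$, and comparing $y(t)$ to $\Phi(\bxi(t))$ via the integral form and a Taylor expansion of $\Phi$), whereas you package the same idea more abstractly via a defining function $\phi$ and a single differentiation of $g(t)=\phi(\bx(t))$; your version is a bit more streamlined but the underlying argument is the same transversality computation.
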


\begin{proof}
  Let $h := \min\{-\widehat\bef^+(t_0,\bx_0)\cdot \bn(\bx_0), 
\widehat\bef^-(t_0\bx_0)\cdot \bn(\bx_0) \}$. 
Then $h>0$ by hypothesis, and therefore, by continuity of
$\widehat{\bef}^\pm$ and $\bn$, 
there exist neighborhoods $I_0$ and $U_0$ of $t_0$ and $\bx_0$,
respectively,
 such that 
$\widehat{\bef}^+(t,\bx)\cdot \bn(\tilde \bx)<-\frac12 h$ 
and $\widehat{\bef}^-(t,\bx)\cdot \bn(\tilde\bx)>\frac12 h$
for $(t,\bx)\in I_0\times U_0$ and $\tilde \bx\in U_0\cap S$.

We can write $S$ locally as the graph of a function. Denoting
points $\bx = (\bxi,y)\in\R{d-1}{\times}\R{}$,
there is $r>0$ such that we can write
(without loss of generality)
$S\cap B_r(\bx_0) = \{(\bxi,y)\in B_r(\bx_0)\, :\, y=\Phi(\bxi)\}$
for some $\Phi$ of class $C^2$. 
The
sets $V^\pm$ are locally defined as
$V^+\cap B_r(\bx_0)=\{(\bxi,y)\in B_r(\bZ_0)\, :\, y > \Phi(\bxi)\}$ and 
$V^-\cap B_r(\bx_0)=\{(\bxi,y)\in B_r(\bZ_0)\, :\, y < \Phi(\bxi)\}$.
By rotating the coordinate axes, if necessary,  we can assume that the tangent hyperplane to $S$ at $\bx_0$
is $\{(\bxi,y)\, :\, y=0\}$, so that $\nabla \Phi(\bxi_0) = \bzero$, where $\bx_0 = (\bxi_0,y_0)$. 
Then
the unit normal to $S$ at $\bx_0 $ 
is  $\bn(\bx_0)=\bn(\bxi_0,\Phi(\bxi_0)) = (\bzero,1)$.

Consider the solution to $\dot\bx=\widehat{\bef}^+(t,\bx)$ with $\bx(t_0)=\bx_0$.
Since $\bx$ is continuous, there is  $\delta_1>0$ such that  $\bx(t)\in U_0$
for $t\in(t_0,t_0+\delta_1)$, and in this interval it satisfies
$\bx(t) = \bx_0 + \int_{t_0}^t\widehat{\bef}^+(s,\bx(s)) \de s$. Hence,
\begin{equation}
  \label{x_inq1}
y(t)=  \bx(t)\cdot\bn(\bx_0) = \bx_0\cdot\bn(\bx_0) + 
\int_{t_0}^t\widehat{\bef}^+(s,\bx(s))\cdot\bn(\bx_0)\, \de s
< y_0 -\frac{h}{2}(t-t_0).
\end{equation}
Writing $\bx(t) = (\bxi(t),y(t))$, we have
$  \bx(t)\cdot\bn(\bx_0) =y(t)$. Additionally,
$\Phi(\bxi(t)) = \Phi(\bxi(t_0))+\nabla\Phi(\bxi(t_0))\cdot (\bxi(t)-\bxi(t_0)) +o(t-t_0)
=y_0 + o(t-t_0)$. Therefore, \eqref{x_inq1} implies there is $\delta<\delta_1$ 
such that 
\begin{equation*}
  y(t)<\Phi(\bxi(t))-\frac{h}{2}(t-t_0) + o(t-t_0) <\Phi(\bxi(t))
\end{equation*}
for $t\in(t_0,t_0+\delta)$. Thus, $\bx(t) = (\bxi(t),y(t))\in V^-\cap
B_r(\bx_0)$ for $t\in(t_0,t_0+\delta)$. 
The proof of the  result for solutions
to $\dot \bx = \widehat{\bef}^-(t,\bx)$ is similar.
\end{proof}

\begin{corollary}[Fine Cross-Slip] \label{cor:f0}
Assume that the conditions \eqref{aaa} hold for $(t_0,\bx_0)\in (a,b)\times S$. 
Then there exist $\delta>0$ and a unique solution $\bx$ defined on
$[t_0,t_0+\delta)$ 
to the initial value problem \eqref{inclusion1} that is confined to $S$.
\end{corollary}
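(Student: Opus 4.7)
My plan is to construct the $S$-confined solution explicitly as the classical Filippov sliding mode and then to argue that any solution to \eqref{inclusion1} that remains on $S$ must coincide with it. By \eqref{aaa} and continuity of $\widehat{\bef}^\pm$ and $\bn$, in some neighborhood $I_0\times U_0$ of $(t_0,\bx_0)$ one has $\widehat{\bef}^-(t,\bx)\cdot\bn(\bx)>0$ and $\widehat{\bef}^+(t,\bx)\cdot\bn(\bx)<0$ whenever $\bx\in S\cap U_0$. I would introduce
$$
\alpha(t,\bx) := \frac{-\widehat{\bef}^+(t,\bx)\cdot\bn(\bx)}{\widehat{\bef}^-(t,\bx)\cdot\bn(\bx) - \widehat{\bef}^+(t,\bx)\cdot\bn(\bx)} \in (0,1),
$$
which is the unique coefficient for which the convex combination
$$
\bF(t,\bx) := \alpha(t,\bx)\widehat{\bef}^-(t,\bx) + (1-\alpha(t,\bx))\widehat{\bef}^+(t,\bx)
$$
is orthogonal to $\bn(\bx)$, and hence tangent to $S$ at points of $S$.

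To make sense of $\bF$ off $S$, I would extend $\bn$ to a $C^1$ field on a tubular neighborhood of $S$, for instance by setting $\bn = \nabla\psi$ for $\psi$ the signed distance to $S$ (well defined and $C^2$ because $S$ is $C^2$). Then $\alpha$ and $\bF$ are of class $C^1$, the denominator in $\alpha$ being bounded away from zero on a possibly smaller neighborhood. Classical Picard-Lindel\"of theory yields $\delta>0$ and a unique $C^1$ curve $\bx:[t_0,t_0+\delta)\to U_0$ satisfying $\dot\bx=\bF(t,\bx)$ with $\bx(t_0)=\bx_0$. Since $\bF\cdot\bn\equiv 0$ by construction, the derivative of $\psi(\bx(t))$ vanishes identically, so $\psi(\bx(t))\equiv 0$ and $\bx(t)\in S$. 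Because $\bF(t,\bx)\in\mathrm{co}\{\widehat{\bef}^-(t,\bx),\widehat{\bef}^+(t,\bx)\}=H(t,\bx)$ for $\bx\in S$, this $\bx$ is a solution of the differential inclusion \eqref{inclusion1}.

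For uniqueness among $S$-confined solutions, let $\by$ be absolutely continuous on some interval $[t_0,t_0+\delta')$ with $\by(t)\in S$ and $\dot\by(t)\in H(t,\by(t))$ a.e. Differentiating $\psi(\by(t))\equiv 0$ via the chain rule for absolutely continuous curves yields $\dot\by(t)\cdot\bn(\by(t))=0$ almost everywhere. Writing $\dot\by(t)=\beta(t)\widehat{\bef}^-(t,\by(t))+(1-\beta(t))\widehat{\bef}^+(t,\by(t))$ for some measurable $\beta(t)\in[0,1]$, the normal-orthogonality condition forces $\beta(t)=\alpha(t,\by(t))$ pointwise. Hence $\by$ satisfies the same $C^1$ ODE as $\bx$, and equality on $[t_0,t_0+\min\{\delta,\delta'\})$ follows from Picard-Lindel\"of uniqueness.

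The step that I expect to require the most care is the uniqueness argument: one must justify, for an arbitrary absolutely continuous $\by$ confined to $S$, that the tangency relation $\dot\by\cdot\bn(\by)=0$ holds almost everywhere, as this is precisely what reduces the multi-valued problem to a classical Lipschitz ODE on $S$. Existence, by contrast, is comparatively soft: once $\bF$ is built and $\bn$ is extended via the signed distance, it is a direct application of Picard-Lindel\"of combined with the tangency of $\bF$ to the level sets of $\psi$.
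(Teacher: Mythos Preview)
Your argument is correct for the statement as literally phrased, but it takes a genuinely different route from the paper and proves slightly less than the paper does.

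The paper's proof is short and indirect: existence and right uniqueness of the solution to \eqref{inclusion1} come straight from Theorems~\ref{thm:FilippovEx} and~\ref{thm:FilippovUn} (the hypothesis \eqref{aaa} is exactly the right-uniqueness condition in Theorem~\ref{thm:FilippovUn}), and then confinement to $S$ is established by contradiction. If the solution were to leave $S$ and enter, say, $V^+$ at some time, then on a subinterval it would solve the classical ODE $\dot\bx=\widehat\bef^+(t,\bx)$ with initial point on $S$; Lemma~\ref{lem:pass} then forces the trajectory into $V^-$, a contradiction. Note that this yields the stronger conclusion that \emph{every} solution of the inclusion stays on $S$ for short time, i.e., full right uniqueness for \eqref{inclusion1}, which is precisely what is invoked later in Theorem~\ref{cs004}.

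Your approach is more constructive: you build the sliding field $\bF$ via the signed distance function and solve a classical $C^1$ ODE to produce the $S$-confined solution directly, then use Picard--Lindel\"of uniqueness to show any $S$-confined solution agrees with it. This has the advantage of being self-contained (it does not appeal to Filippov's uniqueness theorem or to Lemma~\ref{lem:pass}) and of delivering the explicit form of the sliding velocity, which the paper records separately in Remark~\ref{rem:f0}. Its drawback is that, as written, it only proves uniqueness \emph{among solutions already known to lie on $S$}; you have not shown that a solution of the inclusion cannot leave $S$. To recover the stronger conclusion the paper obtains (and later uses), you would still need an argument of the Lemma~\ref{lem:pass} type to rule out excursions into $V^\pm$.
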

\begin{proof}
Existence and uniqueness are 
consequences of Theorems \ref{thm:FilippovEx} and \ref{thm:FilippovUn}.
Let $T$ be the maximal existence time provided by Theorem \ref{thm:FilippovEx}.

As in the proof of Lemma \ref{lem:pass}, there are neighborhoods
$I_0$ and $U_0$  of $t_0$ and $\bx_0$, respectively, such that 
$\widehat{\bef}^+(t,\bx)\cdot \bn(\tilde \bx)<-\frac12 h$ 
and $\widehat{\bef}^-(t,\bx)\cdot \bn(\tilde\bx)>\frac12 h$
for $(t,\bx)\in I_0\times U_0$ and $\tilde \bx\in U_0\cap S$,
with $h = \min\{-\widehat\bef^+(t_0,\bx_0)\cdot \bn(\bx_0), 
\widehat\bef^-(t_0\bx_0)\cdot \bn(\bx_0) \}$. 
By continuity of $\bx(t)$, there exists a $\delta>0$ such that
$\bx(t)\in   U_0$ for $t\in (t_0,t_0+\delta)$.
Suppose there is $t_1\in (t_0,t_0+\delta)$ such that $\bx(t_1) \notin
S$. Without loss of generality, we can assume $\bx(t_1)\in
V^+$, and we define 
\[
s_1:=\sup\{s\in [t_0,t_1)\, :\, \bx(s)\notin V^+\},
\]
i.e., $s_1$ is the last time $\bx(t)$ belongs to $S$ before entering $V^+$ and remaining in $V^+$ for $t\in (s_1,t_1]$. 
It follows that  $\bx(t)$ solves $\dot \bx = \widehat\bef^+(t,\bx)$
on $[s_1,t_1]$ with $\bx(s_1) \in S$. 
Since the hypotheses of Lemma
\ref{lem:pass} are satisfied,  there
is a unique solution to $\dot\bx=\widehat{\bef}^+(t,\bx)$ on
$[s_1,s_1+\hat \delta]$ for some $\hat \delta >0$ , 
where $\bx(t)\in V^-$ for $t\in (s_1,s_1+\hat\delta)$. This
contradicts the fact that $\bx(t) \in V^+$  on $[s_1,t_1]$.
We conclude that $\bx(t) \in S$ for $t\in [t_0,t_0+\delta)$.
\end{proof}

\begin{remark}\label{rem:f0}
In view of Corollary \ref{cor:f0}, the velocity field $\dot\bx$ is tangent to $S$, therefore it must be orthogonal to $\bn(\bx)$, for $\bx\in S$. Moreover, by  \eqref{901}, 
$\dot\bx$ belongs to $\ch\{\widehat\bef^-(t,\bx),\widehat\bef^+(t,\bx)\}$, and so,
\begin{equation*}
\dot\bx=\bef^0(t,\bx)\in H(t,\bx),
\qquad\text{where}\quad \bef^0(t,\bx):=\alpha\widehat\bef^+(t,\bx)+(1-\alpha)\widehat\bef^-(t,\bx)
\end{equation*}
and $\alpha = \alpha(t,\bx)\in(0,1)$ is given by
\begin{equation*}
\alpha=\frac{\widehat\bef^-(t,\bx)\cdot\bn(\bx)}{\widehat\bef^-(t,\bx)\cdot\bn(\bx)-\widehat\bef^+(t,\bx)\cdot\bn(\bx)},
\end{equation*}
since $\bef^0(t,\bx) \cdot \bn(\bx)=0$.
\end{remark}

\subsection{Setting for the Dynamics}\label{sec:disdyn}
We now turn our attention to the dynamics of 
the system $\cZ$. We will neglect inertia and any
external body forces, and consider only the Peach-K\"ohler
force $\bj_i$ as given in \eqref{PKj}. 

Recall that a screw dislocation is a line in a three-dimensional cylindrical body
$B$, and is represented by a point in the cross-section $\Omega$. 
The motion of dislocations
(often called \emph{dislocation glide}) in crystalline materials
is restricted to a discrete set of crystallographic
planes called \emph{glide planes}, which are
spanned by ${\bf e}_3$ and vectors ${\bf g}$ called
\emph{glide directions}, determined 
by the lattice structure of that material.
We will consider the glide directions as a fixed finite collection
of unit vectors in $\R2$, denoted by 
\begin{equation*}
\mathcal G:=\{\bg_1,\ldots,\bg_M\}\subset S^1,
\end{equation*}
with the requirement that if ${\bf g}\in \mathcal G$ then 
$-{\bf g}\in \mathcal G$. 
The dislocation glide is restricted to the directions in $\mathcal G$,
so the equation of motion for $\bz_i$  has the form
\[
\dot \bz_i = \mathcal V_i \bg_i,\quad \bg_i \in \mathcal G
\]
and $\mathcal V_i$ is a scalar velocity. 

In \cite{Gurtin} motion laws are proposed, where a
variable mobility $M(\bg)$ and Peierls force $F(\bg)$ are incorporated
to obtain equations of the form
\begin{equation}
  \label{MF}
  \dot \bz_i = M(\bg_i)[\max\{\bj_i \cdot \bg_i - P(\bg_i) ,0\}]^p \bg_i,
\end{equation}
with the exponent
$p>0$ allowing for various ``power-law kinetics''.  
The mobility function $M$ favors some directions of dislocation glide. The Peierls force, $P\geq 0$, is a
threshold force, acting as a static friction. If the Peach-K\"ohler
force along $\bg_i$ is below the threshold, then the dislocation will
not move. Glide initiates when $\bj_i\cdot \bg_i > P(\bg_i)$.
In this paper we will assume
the simplest form of linear kinetics ($p=1$) 
with vanishing Peierls force ($P\equiv0$) 
and isotropic mobility ($M\equiv 1$). Thus \eqref{MF} takes the form
\begin{equation}
  \label{motion1}
  \dot \bz_i = (\bj_i(\bz_i) \cdot \bg_i ) \bg_i\quad \mbox{for}\,\, \bg_i \in \mathcal G,
\end{equation}
where we recall that 
\begin{equation}\label{PKj1}
\bj_i(\bz_i)=b_i \bJ \bL\Big[\sum_{j\neq i} \bk_j(\bz_i;\bz_j)+
\nabla u_0(\bz_i;\bz_1,\ldots,\bz_N)\Big],
\end{equation}
with $\bk_j$ and $u_0$ given in \eqref{k_def} and \eqref{u0Neumann}, respectively.
\begin{remark}\label{rem:pksmooth}
	The formula \eqref{PKj1} gives the force on 
	the dislocation at $\bz_i$, 
	and it shows that, as a function of $\bz_i$, 
	the force $\bj_i$ is smooth in the interior
	of $\Omega \setminus \{\bz_1,\ldots,\bz_{i-1},\bz_{i+1},\ldots,\bz_N\}$.
	That is, provided $\bz_i$ is not colliding with another dislocation or with
	$\partial \Omega$, then the force is given by a smooth function.
	Of course, $\bj_i$ depends on the positions of \emph{all} the dislocations,
	and the same reasoning applies to $\bj_i$ as a function of any $\bz_j$. 
\end{remark}

Following the model presented in \cite{Gurtin},
the choice of glide direction in \eqref{motion1} is determined
by  a \emph{maximal dissipation inequality}
for  dislocation glide. This means that the direction of motion of
$\bz_i$ is the glide direction that is most closely aligned with
$\bj_i$. 
Thus, since $\bj_i$ is determined by all the dislocations $\bz_1,\ldots,\bz_N$,
and since $\cG$ is discrete,
the selection of the glide direction $\bg_i\in\cG$ depends in a discontinuous fashion on the dislocations positions.
To stress this fact, we will often write $\bg_i = \bg_i (\bz_1,\ldots, \bz_N)$, $i\in\{1,\ldots,N\}$.

We note that, at any point where $\bz_i(t)$ is
differentiable and where \eqref{motion1} is satisfied,
we have $\dot \bz_i = -(\nabla_{\bz_i} U \cdot \bg_i ) \bg_i$ (see \eqref{PK=DU}), and
the energy dissipation inequality 
\begin{equation}
  \label{Udissip}
  \frac{d}{dt}U(\bz_1,\ldots,\bz_N) = 
\sum_{i=1}^N \nabla_{\bz_i} U \cdot \dot \bz_i
= -\sum_{i=1}^N (\nabla_{\bz_i} U \cdot   \bg_i )^2 \leq 0
\end{equation}
holds. The dissipation in \eqref{Udissip} is maximal when $\bg_i$ maximizes $\{\bj_i \cdot \bg\, | \, \bg\in\cG\}$.
 
Note, however, that when there is more than one glide direction $\bg$ that maximizes $\bj_i \cdot \bg$, then \eqref{motion1} {\emph {becomes ill-defined}}. This leads us to consider differential inclusions in place of differential equations. The problem consists in solving the system of differential inclusions
\begin{equation*}
\left\{\begin{array}{l}
        \dot\bz_\ell \in F_\ell(\bZ), \\
	\bz_\ell(0) = \bz_{\ell,0},
       \end{array}
\right. 
\end{equation*}
where 
\begin{equation*}
\bZ:=(\bz_1,\ldots,\bz_N) \quad\text{and}\quad\bZ_0:=(\bz_{1,0},\ldots,\bz_{N,0})
\end{equation*} 
belong to $\Omega^N\subset \R{2N}$ and,  for  $\ell=1,\ldots,N$,
\begin{equation}\label{f020}
F_\ell(\bZ):=\Big\{(\bj_\ell(\bZ)\cdot\bg)\,\bg:\bg\in\arg\max_{\bg'\in\cG}\{\bj_\ell(\bZ)\cdot\bg'\}\Big\}.
\end{equation}
Setting
\begin{equation}\label{f022}
\cG_\ell(\bZ):=\arg\max_{\bg'\in\cG}\{\bj_\ell(\bZ)\cdot\bg'\},
\end{equation}
the vectors $\bg\in\cG_\ell(\bZ)$ represent the glide 
directions \emph{closest to} $\bj_\ell(\bZ)$ (see \cite{Gurtin}), that is,
\begin{equation}\label{f021}
\bj_\ell(\bZ)\cdot\bg\geq\bj_\ell(\bZ)\cdot\bg',\qquad\text{for all $\bg'\in\cG$}.
\end{equation}
 We are interested in the physically realistic case where the span of
the glide directions is all of $\R2$, otherwise dislocations are
restricted to one-dimensional motion and cannot abruptly change
direction. Therefore, we assume that
\begin{equation}
  \label{spanG}
  {\rm span}(\cG) = \R2.
\end{equation}
When $\bj_\ell(\bZ)\neq0$, the set $F_\ell$ can either contain a single element, which we will call $\bg_\ell(\bZ)$, 
or two distinct elements, denoted by $\bg_\ell^-(\bZ)$ and $\bg_\ell^+(\bZ)$, and in 
this case $\bj_\ell(\bz_\ell)$ is the bisector of the angle formed by $\bg_\ell^-$ and $\bg_\ell^+$. 
\begin{remark}\label{912}
Notice that if $\bj_\ell(\bZ)=\bzero$, then any glide direction $\bg\in\cG$ satisfies \eqref{f021} and therefore $\cG_\ell(\bZ)=\cG$.
\end{remark}
In view of the comments above, we have
\begin{equation}\label{f023}
F_\ell(\bZ)=
\begin{cases}
\{\bzero\} & \text{if $\bj_\ell(\bZ)=\bzero$}, \\
\{(\bj_\ell(\bZ)\cdot\bg_\ell(\bZ))\,\bg_\ell(\bZ)\} & \text{if $\bj_\ell(\bZ)\neq\bzero$ and $\cG_\ell(\bZ)=\{\bg_\ell(\bZ)\}$}, \\
\{(\bj_\ell(\bZ)\cdot\bg_\ell^\pm(\bZ))\,\bg_\ell^\pm(\bZ)\} & \text{if $\bj_\ell(\bZ)\neq\bzero$ and $\cG_\ell(\bZ)=\{\bg_\ell^\pm(\bZ)\}$}, \\
\end{cases}
\end{equation}
and the problem becomes 
\begin{equation}\label{f006}
\begin{cases}
\dot\bZ \in F(\bZ),\\
\bZ(0) = \bZ_0,
\end{cases}
\end{equation}
where 
\begin{equation}\label{f009}
F(\bZ):=F_1(\bZ){\times}\cdots{\times}F_N(\bZ)\subset\R{2N}.
\end{equation}

The domain of the set-valued function $F$ 
must be chosen in such a way that the forces $\bj_\ell(\bZ)$ are well-defined, and so
collisions must be avoided. We denote by
\begin{equation}\label{f024}
\Pi_{jk}:=\{\bZ\in\Omega^N:\bz_j=\bz_k,\,j\neq k\}
\end{equation}
the set where dislocations $\bz_j$ and $\bz_k$ collide, and we 
define the domain of $F$ to be
\begin{equation}\label{f007}
\cD(F):=\Omega^N\setminus\bigcup_{j<k} \Pi_{jk}.
\end{equation}
Recall that the force $\bj_i$ is not defined for $\bz_\ell
\in \partial \Omega$. Since $\Omega$ is open, boundary collisions are also excluded from $\cD(F)$.

\subsection{Local Existence}\label{sec:localexistence}
Following Section \ref{sec:disdyn}, and in view of \eqref{f006} 
and \eqref{f009}, we consider the differential inclusion
\begin{equation}\label{f010}
\left\{\begin{array}{l}
        \dot\bZ \in \ch F(\bZ),\\
	\bZ(0) = \bZ_0.
       \end{array}
\right. 
\end{equation}
The following lemma, whose proof is given in Section \ref{proofoflem:hulls}, shows that the convex 
hull of $F(\bZ)$ is given by 
\begin{equation}\label{f026}
\hat F(\bZ):=(\ch F_1(\bZ)){\times}\cdots{\times}(\ch F_N(\bZ)),
\end{equation}
where, by \eqref{f023},
\begin{equation}\label{f100}
\ch F_\ell(\bZ)=
\begin{cases}
\{\bzero\} & \text{if $\bj_\ell(\bZ)=\bzero$}, \\
\{(\bj_\ell(\bZ)\cdot\bg_\ell(\bZ))\,\bg_\ell(\bZ)\} & 
\text{if $\bj_\ell(\bZ)\neq\bzero$ and $\cG_\ell(\bZ)=\{\bg_\ell(\bZ)\}$}, \\
\Sigma_\ell(\bZ) & \text{if $\bj_\ell(\bZ)\neq\bzero$ and $\cG_\ell(\bZ)=\{\bg_\ell^\pm(\bZ)\}$}, \\
\end{cases}
\end{equation}
with $\Sigma_\ell(\bZ)$ the segment of endpoints $(\bj_\ell(\bZ)\cdot\bg_\ell^-(\bZ))\,\bg_\ell^-(\bZ)$  and $(\bj_\ell(\bZ)\cdot\bg_\ell^+(\bZ))\,\bg_\ell^+(\bZ)$.

\begin{lemma}\label{lem:hulls}
Let $F_\ell(\bZ)$ be defined as in \eqref{f020} for $\ell=1,\ldots,N$, and let $F(\bZ)$ be as in \eqref{f009}.
Then $\ch F(\bZ)=\hat F(\bZ)$, where $\hat F(\bZ)$ is defined  in \eqref{f026}.
\end{lemma}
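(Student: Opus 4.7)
The plan is to reduce the statement to the general fact that the convex hull of a finite Cartesian product of sets equals the Cartesian product of their convex hulls, and then to check, case by case, that $\ch F_\ell(\bZ)$ has the explicit form given in \eqref{f100}. Since the domain $\cD(F)$ avoids collisions and the boundary, the quantities $\bj_\ell(\bZ)$ are well defined throughout, so it suffices to work pointwise in $\bZ$.

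First I would establish the auxiliary identity $\ch(A_1{\times}\cdots{\times}A_N)=\ch A_1{\times}\cdots{\times}\ch A_N$ for arbitrary nonempty sets $A_\ell\subset\R{2}$. The inclusion $\subset$ is immediate: $A_1{\times}\cdots{\times}A_N\subset\ch A_1{\times}\cdots{\times}\ch A_N$, and the right-hand side is convex (a product of convex sets is convex), so it contains the convex hull of the left-hand side. For the reverse inclusion $\supset$, given $(\bx_1,\ldots,\bx_N)$ with $\bx_\ell=\sum_{k=1}^{n_\ell}\lambda_{\ell,k}\ba_{\ell,k}$, $\ba_{\ell,k}\in A_\ell$, $\lambda_{\ell,k}\geq 0$, $\sum_k\lambda_{\ell,k}=1$, I would consider the multi-indexed convex combination
\begin{equation*}
\sum_{k_1,\ldots,k_N}\Big(\prod_{\ell=1}^N\lambda_{\ell,k_\ell}\Big)(\ba_{1,k_1},\ldots,\ba_{N,k_N}).
\end{equation*}
The weights $\prod_\ell\lambda_{\ell,k_\ell}$ are nonnegative and sum to $\prod_\ell\sum_{k_\ell}\lambda_{\ell,k_\ell}=1$, while the $\ell$-th component equals $\sum_{k_\ell}\lambda_{\ell,k_\ell}\ba_{\ell,k_\ell}=\bx_\ell$ because the remaining factors collapse to $1$. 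This exhibits $(\bx_1,\ldots,\bx_N)$ as a convex combination of elements of $A_1{\times}\cdots{\times}A_N$.

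Applying this with $A_\ell:=F_\ell(\bZ)$ yields $\ch F(\bZ)=\ch F_1(\bZ){\times}\cdots{\times}\ch F_N(\bZ)$. It then remains to verify, using \eqref{f023}, that each factor matches the description in \eqref{f100}. In the first two cases of \eqref{f023} the set $F_\ell(\bZ)$ is a singleton, hence already convex, so $\ch F_\ell(\bZ)=F_\ell(\bZ)$. In the third case, $F_\ell(\bZ)$ consists of the two points $(\bj_\ell(\bZ)\cdot\bg_\ell^\pm(\bZ))\,\bg_\ell^\pm(\bZ)$, and the convex hull of two points in $\R{2}$ is the segment joining them, which is precisely $\Sigma_\ell(\bZ)$. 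Combining this with the product formula gives $\ch F(\bZ)=\hat F(\bZ)$.

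I do not anticipate a genuine obstacle: the only mildly nontrivial step is the product-of-convex-hulls identity, but the multi-index argument above handles it cleanly, and everything else is bookkeeping on the three cases of \eqref{f023}.
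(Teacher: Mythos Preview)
Your proof is correct, and in fact cleaner than the paper's, but the route differs. The paper does not invoke the general identity $\ch(A_1\times\cdots\times A_N)=\ch A_1\times\cdots\times\ch A_N$; instead it exploits the specific structure that each $F_\ell(\bZ)=\{\bp_\ell,\bq_\ell\}$ has at most two points. For the inclusion $\ch F\subseteq\hat F$ the paper argues exactly as you do (show $F\subseteq\hat F$ and $\hat F$ is convex). For the reverse inclusion it proceeds by induction on $N$: given $\bV^{(N)}\in\hat F^{(N)}$ with last block $s_N\bp_N+(1-s_N)\bq_N$, the induction hypothesis writes the first $N-1$ blocks as $\sum_i\alpha_i\hat\bV_i^{(N-1)}$, and then one sets $\beta_i:=s_N\alpha_i$, $\beta_{i+2^{N-1}}:=(1-s_N)\alpha_i$ to obtain $\bV^{(N)}=\sum_i\beta_i\hat\bV_i^{(N)}$ with $\hat\bV_i^{(N)}\in F^{(N)}$. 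Your multi-index construction is essentially the unwound, non-inductive version of this and has the advantage of working for arbitrary (not just two-point) factors $A_\ell$, at no extra cost. The paper's approach buys nothing beyond concreteness; yours is both shorter and more general. Your case check of \eqref{f100} is fine but, strictly speaking, not part of the lemma---the paper states \eqref{f100} as an immediate consequence of \eqref{f023} before the lemma is even formulated.
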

Lemma \ref{lem:hulls} is useful for understanding the dynamics in
$\Omega$ rather than in $\Omega^N$. Each $\bz_i$ moves in some
direction $\bg_i\in \mathcal{G}$, unless the $\arg\max$ in
\eqref{f022} is multivalued, in which case $\bz_i$ moves in a
direction belonging to the convex hull of $\bg_i^+$ and $\bg_i^-$. 
Lemma \ref{lem:hulls} makes this precise and validates the use of
\eqref{f010} as our model for dislocation motion.

\begin{lemma}\label{f105}
Let $\cD(F)$ be defined in \eqref{f007}. 
Then the set-valued map $F:\cD(F)\to\cP(\R{2N})$ defined in \eqref{f009} is continuous (according to Definition \ref{f103}).
\end{lemma}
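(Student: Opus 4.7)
The plan is to reduce continuity of $F$ to continuity of each component $F_\ell:\cD(F)\to\cP(\R{2})$ and then argue by case analysis on the structure of $F_\ell(\bZ)$ described in \eqref{f023}. Applying Remark \ref{f107} inductively to the Cartesian product \eqref{f009} yields
\begin{equation*}
d_\cH(F(\bZ_n),F(\bZ))\leq\sum_{\ell=1}^N d_\cH(F_\ell(\bZ_n),F_\ell(\bZ)),
\end{equation*}
so it suffices to prove continuity of each $F_\ell$ at an arbitrary $\bZ\in\cD(F)$.

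The fundamental input is Remark \ref{rem:pksmooth}: on $\cD(F)$ the Peach-K\"ohler force $\bj_\ell$ is smooth, and in particular continuous, in $\bZ$. Combined with the finiteness of $\cG$, this makes $\bZ\mapsto\max_{\bg\in\cG}\bj_\ell(\bZ)\cdot\bg$ continuous; moreover, for any $\bg\in\cG$ that is not a maximizer at $\bZ$, the strict inequality $\bj_\ell(\bZ)\cdot\bg<\max_{\bg'\in\cG}\bj_\ell(\bZ)\cdot\bg'$ persists on a whole neighborhood of $\bZ$. As a consequence, for every sequence $\bZ_n\to\bZ$ in $\cD(F)$, the inclusion $\cG_\ell(\bZ_n)\subseteq\cG_\ell(\bZ)$ holds for all sufficiently large $n$.

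I would next fix $\bZ_n\to\bZ$ and distinguish the three cases of \eqref{f023}. If $\bj_\ell(\bZ)\neq\bzero$ with a unique maximizer $\bg_\ell(\bZ)$, the inclusion above forces $\cG_\ell(\bZ_n)=\{\bg_\ell(\bZ)\}$ for $n$ large, so $F_\ell(\bZ_n)$ is a singleton whose distance to $F_\ell(\bZ)$ is controlled by $|\bj_\ell(\bZ_n)-\bj_\ell(\bZ)|\to 0$. If $\bj_\ell(\bZ)=\bzero$, then by \eqref{f023} we have $F_\ell(\bZ)=\{\bzero\}$, and since every element of $F_\ell(\bZ_n)$ has modulus at most $|\bj_\ell(\bZ_n)|\to 0$, the Hausdorff distance again tends to zero.

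The main obstacle is the intermediate case $\bj_\ell(\bZ)\neq\bzero$ with $\cG_\ell(\bZ)=\{\bg_\ell^-(\bZ),\bg_\ell^+(\bZ)\}$, so that $F_\ell(\bZ)$ consists of the two points $v^\pm:=(\bj_\ell(\bZ)\cdot\bg_\ell^\pm(\bZ))\bg_\ell^\pm(\bZ)$. From $\cG_\ell(\bZ_n)\subseteq\{\bg_\ell^-(\bZ),\bg_\ell^+(\bZ)\}$ for $n$ large, each element of $F_\ell(\bZ_n)$ lies within $|\bj_\ell(\bZ_n)-\bj_\ell(\bZ)|$ of $v^-$ or $v^+$, which settles the $\sup_{\ba\in F_\ell(\bZ_n)}\dist(\ba,F_\ell(\bZ))$ half of the Hausdorff distance. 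The delicate point, which I anticipate as the main technical difficulty, is to show that both $v^+$ and $v^-$ are approximated by $F_\ell(\bZ_n)$, even when $\cG_\ell(\bZ_n)$ reduces to a singleton along subsequences; the argument must exploit the bisection identity $\bj_\ell(\bZ)\cdot\bg_\ell^-(\bZ)=\bj_\ell(\bZ)\cdot\bg_\ell^+(\bZ)$ together with continuity of $\bj_\ell$ to supply the required quantitative control relating the selected endpoint to the unselected one.
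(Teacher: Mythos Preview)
Your strategy is essentially identical to the paper's: reduce via Remark~\ref{f107} to the components $F_\ell$, use continuity of $\bj_\ell$ (Remark~\ref{rem:pksmooth}) together with finiteness of $\cG$ to obtain $\cG_\ell(\bZ_n)\subseteq\cG_\ell(\bZ)$ for large $n$, and then run the three-case analysis of \eqref{f023}. The paper's treatment of the two-maximizer case consists of writing $F_\ell(\bZ_n)=\{(\bj_\ell(\bZ_n)\cdot\bg_\ell^\pm(\bZ))\,\bg_\ell^\pm(\bZ)\}$ and concluding $d_\cH(F_\ell(\bZ_n),F_\ell(\bZ))\le\|\bj_\ell(\bZ_n)-\bj_\ell(\bZ)\|$.

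You are right, however, to single out the second half of the Hausdorff distance as the obstacle, and the idea you propose for closing it cannot succeed. The inclusion $\cG_\ell(\bZ_n)\subseteq\{\bg_\ell^+,\bg_\ell^-\}$ is typically \emph{strict}: whenever $\bZ_n$ lies on one side of the ambiguity surface $\cA_\ell$, one has $\cG_\ell(\bZ_n)=\{\bg_\ell^+\}$, so $F_\ell(\bZ_n)=\{c_n\bg_\ell^+\}$ with $c_n\to c:=\bj_\ell(\bZ)\cdot\bg_\ell^+=\bj_\ell(\bZ)\cdot\bg_\ell^-$. For $v^-=c\,\bg_\ell^-\in F_\ell(\bZ)$ this gives
\[
\dist\bigl(v^-,F_\ell(\bZ_n)\bigr)=\bigl|c\,\bg_\ell^- - c_n\,\bg_\ell^+\bigr|\longrightarrow c\,|\bg_\ell^--\bg_\ell^+|>0,
\]
since $c>0$ (the maximum of $\bj_\ell(\bZ)\cdot\bg$ over $\cG$ is positive when $\bj_\ell(\bZ)\neq\bzero$ and $\cG$ spans $\R2$) and $\bg_\ell^+\neq\bg_\ell^-$. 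The bisection identity you invoke controls only the scalar factor, not the direction, so it cannot force $\sup_{b\in F_\ell(\bZ)}\dist(b,F_\ell(\bZ_n))\to 0$. What your argument (and the paper's) does establish cleanly is the one-sided bound $\sup_{a\in F_\ell(\bZ_n)}\dist(a,F_\ell(\bZ))\to 0$, i.e.\ \emph{upper} semicontinuity of $F$; this is exactly the hypothesis required in Theorem~\ref{thm:FilippovEx} and suffices for Corollary~\ref{f1071} and Theorem~\ref{f008}, since convexification preserves upper semicontinuity.
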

\begin{proof} 
Let $\bZ,\bZ_n\in\cD(F)$ be such that $\bZ_n\to\bZ$ as $n\to\infty$. 
In view of Remark \ref{f107}, 
it suffices to show that for every $\ell\in\{1,\ldots,N\}$, 
\begin{equation*}
d_\cH(F_\ell(\bZ_n),F_\ell(\bZ))\to0\qquad\text{as $n\to\infty$}.
\end{equation*}
Fix $\ell\in\{1,\ldots,N\}$. We consider the two cases $\bj_\ell(\bZ)=0$ and $\bj_\ell(\bZ)\neq 0$.

If $\bj_\ell(\bZ)=0$, then by \eqref{f023} $F_\ell(\bZ)=\{\bzero\}$. 
In turn, again by \eqref{f023} the continuity of $\bj_\ell$ (cf. Remark \ref{rem:pksmooth} and \eqref{f007}), 
$d_\cH(F_\ell(\bZ_n),\bzero)\leq||\bj_\ell(\bZ_n)||\to0$ 
as $n\to\infty$.

If $\bj_\ell(\bZ)\neq\bzero$, then, again by continuity of 
$\bj_\ell$, $\bj_\ell(\bZ_n)\neq\bzero$ for all $n\geq\bar n$, for some $\bar n\in\N$.
Taking $\bar n$ larger, if necessary, we claim that 
$\bg_\ell^-(\bZ_n),\bg_\ell^+(\bZ_n)\in\{\bg_\ell^-(\bZ),\bg_\ell^+(\bZ)\}$ for $n\geq\bar n$.
Arguing by contradiction, if the claim fails, since $\cG$ is finite, 
there exists $\be\in\cG\setminus\{\bg_\ell^\pm(\bZ)\}$ such that 
$\bg_\ell^-(\bZ_n)=\be$ or $\bg_\ell^+(\bZ_n)=\be$ for infinitely many $n$. 
By \eqref{f021} and \eqref{f022}, $\bj_\ell(\bZ_n)\cdot\be\geq\bj_\ell(\bZ_n)\cdot\bg$ 
for all $\bg\in\cG$ and for infinitely many $n$.
Letting $n\to\infty$ and using the continuity of $\bj_\ell$, 
it follows that $\bj_\ell(\bZ)\cdot\be\geq\bj_\ell(\bZ)\cdot\bg$ for all $\bg\in\cG$, 
which implies that $\be\in\cG_\ell(\bZ)$, which is a contradiction. Thus the claim holds.

In particular, we have shown that
$F_\ell(\bZ_n)=\{(\bj_\ell(\bZ_n)\cdot\bg_\ell^\pm(\bZ))\bg_\ell^\pm(\bZ)\}$ 
for $n\geq\bar n$, 
hence $d_\cH(F_\ell(\bZ_n),F_\ell(\bZ))\leq||\bj_\ell(\bZ_n)-\bj_\ell(\bZ)||\to0$ as $n\to\infty$. 
This concludes the proof.
\end{proof}

\begin{corollary}\label{f1071}
Let $F:\cD(F)\to\cP(\R{2N})$ be defined by \eqref{f009} and \eqref{f007}, and consider the set 
valued map $\ch F(\bZ)$, $\bZ\in\cD(G)$.
Then $\ch F(\bZ)$ is nonempty, closed, convex for every $\bZ\in\cD(F)$, and $\ch F$ is continuous.
\end{corollary}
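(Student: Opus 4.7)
The plan is to split the statement into the pointwise geometric claims (nonempty, closed, convex) and the continuity claim, handling each separately by leveraging the results already established.

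First I would dispatch the pointwise properties by invoking Lemma \ref{lem:hulls}, which identifies $\ch F(\bZ)$ with the product $\hat F(\bZ) = (\ch F_1(\bZ)) \times \cdots \times (\ch F_N(\bZ))$. Reading off \eqref{f100}, each factor $\ch F_\ell(\bZ)$ is either a singleton (the cases $\bj_\ell(\bZ)=\bzero$ or $\cG_\ell(\bZ)=\{\bg_\ell(\bZ)\}$) or the closed segment $\Sigma_\ell(\bZ)$ with the two explicit endpoints. In every case the factor is nonempty, compact, and convex, and these three properties pass to the Cartesian product. This handles three of the four assertions with no calculation.

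For the continuity of $\ch F$, the plan is to bootstrap from Lemma \ref{f105} (continuity of $F$) using the standard fact that for nonempty compact sets $A,B \subset \R{d}$ one has $d_\cH(\ch A, \ch B) \le d_\cH(A,B)$. I would either quote this or give the short three-line argument: any $x=\sum_k \lambda_k a_k \in \ch A$ can be compared, for any $\ep>0$, to $y=\sum_k \lambda_k b_k \in \ch B$ with $\|a_k-b_k\|\le d_\cH(A,B)+\ep$, yielding $\|x-y\|\le d_\cH(A,B)+\ep$; symmetrically for $\ch B$. Applied factor by factor to $F_\ell$ and combined with the product estimate \eqref{f108} of Remark \ref{f107}, this gives
\begin{equation*}
d_\cH(\ch F(\bZ_n),\ch F(\bZ)) \le \sum_{\ell=1}^N d_\cH(\ch F_\ell(\bZ_n),\ch F_\ell(\bZ)) \le \sum_{\ell=1}^N d_\cH(F_\ell(\bZ_n),F_\ell(\bZ)) \to 0
\end{equation*}
whenever $\bZ_n \to \bZ$ in $\cD(F)$, by Lemma \ref{f105}.

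I do not anticipate a genuine obstacle here, since Lemma \ref{lem:hulls} and Lemma \ref{f105} already carry all the structural content. The only point deserving care is that the sets $F_\ell(\bZ)$ can jump between singletons and two-point sets as $\bZ$ varies, so one might worry about discontinuities of $\ch F_\ell$ across such transitions; however, this concern is exactly what the case analysis at the end of the proof of Lemma \ref{f105} has already ruled out (the two vectors $\bg_\ell^\pm(\bZ_n)$ are drawn from $\{\bg_\ell^-(\bZ),\bg_\ell^+(\bZ)\}$ for large $n$, so the segment $\Sigma_\ell(\bZ_n)$ may degenerate to a point but its endpoints vary continuously with $\bj_\ell$). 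In view of this, the continuity of $\ch F$ really is a direct corollary and a short proof suffices.
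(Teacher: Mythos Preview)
Your proof is correct and follows essentially the same strategy as the paper: the pointwise properties are immediate from the structure of $F(\bZ)$, and continuity of $\ch F$ is deduced from continuity of $F$ (Lemma \ref{f105}). The only difference is that the paper disposes of the continuity step by citing a general result from Filippov (Lemma 16, p.~66) asserting that continuity of a set-valued map passes to its convex hull, whereas you supply the self-contained argument via $d_\cH(\ch A,\ch B)\le d_\cH(A,B)$ and the product estimate \eqref{f108}; your version is slightly longer but avoids the external reference.
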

\begin{proof}
For all $\bZ\in\cD(F)$, the set $\ch F(\bZ)$ is nonempty because $F(\bZ)$ is nonempty. 
By definition of convexification, $\ch F(\bZ)$ is  closed and convex. 
By Lemma \ref{f105}, the set valued map $F$ is continuous, and 
therefore so is $\ch F$ (see Lemma 16, page 66 in \cite{Filippov}).
This corollary is proved.
\end{proof}

Note that $\ch F$ is not bounded on $\cD(F)$ because $|\bz_i-\bz_j|$ 
and $\dist(\bz_i,\partial \Omega)$ can become arbitrarily small, and
thus $\bj_i$ can become unbounded (see \eqref{PKj} and \eqref{k_def}). 

\begin{theorem}[Local existence]\label{f008}
Let $\Omega\subset\R2$ be a connected open set. Let $F:\cD(F)\to\cP(\R{2N})$ be 
defined as in \eqref{f009} and \eqref{f007} with each $F_\ell$ as in \eqref{f023}, 
and let $\bZ_0\in\cD(F)$ be a given initial configuration of dislocations. 
Then there exists a solution $\bZ:[-T,T]\to\cD(F)$ to  \eqref{f010}, with $T\ge r_0/m_0$,  where
\begin{equation}\label{f200}
0<r_0<\dist(\bZ_0,\partial\cD(F))\;\;\text{and}\;\; 
m_0:=\max_{\bZ\in\overline{B(\bZ_0,r_0)}}\Bigg(\sum_{\ell=1}^N |\bj_\ell(\bZ)|^2\Bigg)^{1/2}.
\end{equation}
\end{theorem}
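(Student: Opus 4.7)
The plan is to apply the Filippov local existence theorem (Theorem \ref{thm:FilippovEx}) to the set-valued map $\ch F$, viewed as a map on $\R{}\times\cD(F)$ with trivial time dependence. Corollary \ref{f1071} already supplies the qualitative hypotheses: $\ch F(\bZ)$ is nonempty, closed, and convex for every $\bZ\in\cD(F)$, and $\ch F$ is continuous, hence upper semicontinuous. What remains is to exhibit an open neighborhood of $(0,\bZ_0)$ on which $\ch F$ is bounded, and to make the bound quantitative enough to read off the estimate $T\ge r_0/m_0$.

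First I would check that $\cD(F)$ is open in $\R{2N}$: $\Omega^N$ is open, and each $\Pi_{jk}$ is the zero set of the continuous map $\bZ\mapsto\bz_j-\bz_k$, hence closed. Fixing $r_0$ as in \eqref{f200} guarantees that $\overline{B(\bZ_0,r_0)}\subset\cD(F)$; on this compact set the forces $\bj_1,\dots,\bj_N$ are continuous (cf.\ Remark \ref{rem:pksmooth}) and therefore bounded.

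Next I would bound $\ch F$ on the ball. By \eqref{f100} (equivalently, via Lemma \ref{lem:hulls}), every element of $F_\ell(\bZ)$ has the form $(\bj_\ell(\bZ)\cdot\bg)\bg$ with $\bg\in\cG\subset S^1$, and therefore Euclidean norm at most $|\bj_\ell(\bZ)|$; since the closed ball of this radius is convex, the same bound passes to $\ch F_\ell(\bZ)$. Consequently, for any $\bW=(\bw_1,\dots,\bw_N)\in\ch F(\bZ)=\hat F(\bZ)$ one has
\begin{equation*}
|\bW|^2=\sum_{\ell=1}^N|\bw_\ell|^2\le\sum_{\ell=1}^N|\bj_\ell(\bZ)|^2\le m_0^2
\qquad\text{for every }\bZ\in\overline{B(\bZ_0,r_0)}.
\end{equation*}

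Finally I would invoke Theorem \ref{thm:FilippovEx} with $D:=\R{}\times\cD(F)$, $t_0=0$, and the cylinder $C:=[-T',T']\times\overline{B(\bZ_0,r_0)}$ for arbitrary $T'>0$; the theorem produces a solution on $[-h,h]$ with $h\ge\min\{T',r_0/m_0\}$, and letting $T'\to\infty$ yields $h\ge r_0/m_0$, as claimed. I do not expect a serious obstacle: the only substantive step beyond Corollary \ref{f1071} is the local boundedness estimate, which reduces to the elementary observation that the vectors $(\bj_\ell\cdot\bg)\bg$ lie in a ball of radius $|\bj_\ell|$, and this bound survives convexification and the product structure of $\hat F$.
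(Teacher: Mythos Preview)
Your proposal is correct and follows essentially the same approach as the paper: invoke Corollary~\ref{f1071} for the qualitative hypotheses on $\ch F$, observe boundedness on the ball $\overline{B(\bZ_0,r_0)}$, and apply Theorem~\ref{thm:FilippovEx}. Your version supplies more detail---the explicit bound $|\bW|\le m_0$ via $|(\bj_\ell\cdot\bg)\bg|\le|\bj_\ell|$ and the $T'\to\infty$ argument for the cylinder---but the paper's proof is just a terser statement of the same reasoning.
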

\begin{proof}
The function $F$ is bounded on the ball $B(\bZ_0,r_0)\subset \cD(F)$. Hence,
by Corollary \ref{f1071}, the set valued map $\ch F$ satisfies the conditions of 
Theorem \ref{thm:FilippovEx} in $B(\bZ_0,r_0)$, and thus local existence holds.
\end{proof}
\begin{remark}\label{f201}
In view of \eqref{f007} and \eqref{f200}, solutions to the problem \eqref{f010} exist as long as dislocations stay away from $\partial\Omega$ and do not collide.
\end{remark}

\subsection{Local Uniqueness}\label{sec:unique}
The set where dislocations can move in
either of two different glide directions is called \emph{ambiguity set}
and denoted by $\cA$. To be precise, 
we define  
\begin{equation}
  \label{Amb_def}
  \cA := \bigcup_{\ell=1}^N \cA_\ell,\quad{\rm where}\quad
\cA_\ell := \left\{ \bZ \in \cD(F)\, : \, {\rm card}(\cG_\ell(\bZ))=2\right\},
\end{equation}
and $\cG_\ell(\bZ)$ is defined in \eqref{f022}. On $\cA_\ell$ the direction of the Peach-K\"ohler
force
$\bj_\ell$ bisects two different glide directions that are closest to it. 
Note that $\bj_\ell(\bZ) \neq {\bf 0}$ for $\bZ\in \cA_\ell$,
because $ {\rm card}(\cG)\geq 4$ by assumption \eqref{spanG} and since
$\bg\in \mathcal{G}$ implies $-\bg\in \cG$. 

 The uniqueness results in Subsection \ref{sec:setting} can only be applied at points $\bZ_0\in \cA$ in which the ambiguity set  $\cA$ is locally a $(2N-1)$-dimensional smooth surface separating $\cD(F)$ into two open sets in a neighborhood of $\bZ_0$. 
  In this subsection we show that $\cA$ is a $(2N-1)$-dimensional smooth surface 
outside of a ``singular set'' and we estimate the Hausdorff dimension of this set.

\begin{lemma}\label{lem:janalytic}
For all $\ell\in\{1,\ldots,N\}$ 
the functions $\bj_\ell (\bz_1,\ldots,\bz_N)$ are analytic on 
any compact subset of $\cD (F)$.
\end{lemma}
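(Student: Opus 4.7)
The plan is to split $\bj_\ell$ into a manifestly analytic part coming from the singular strains $\bk_j$ and a boundary-response part coming from $u_0$, then treat the latter by complexifying the parameters in the Neumann problem \eqref{u0Neumann}.

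From \eqref{PKj1},
\[
\bj_\ell(\bZ) = b_\ell \bJ \bL \Big[ \sum_{j\neq\ell} \bk_j(\bz_\ell;\bz_j) + \nabla u_0(\bz_\ell;\bz_1,\ldots,\bz_N) \Big].
\]
By \eqref{k_def}, each $\bk_j(\bz_\ell;\bz_j)$ is a rational function of $(\bz_\ell,\bz_j)$ whose denominator $|\bLam(\bz_\ell-\bz_j)|^2$ is bounded away from zero on any compact $K\subset\cD(F)$; hence the finite sum $\sum_{j\neq\ell}\bk_j(\bz_\ell;\bz_j)$ is real-analytic on $K$. The only remaining task is to show that $(\bz_1,\ldots,\bz_N)\mapsto \nabla u_0(\bz_\ell;\bz_1,\ldots,\bz_N)$ is real-analytic on $K$.

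I would establish this by complexifying the parameters. Fix compact $K\subset\cD(F)$; there exists a complex neighborhood $U\subset\mathbb{C}^{2N}$ of $K$ on which $|\bLam(\bx-\bz_i)|^2$ remains bounded away from zero, uniformly for $\bx\in\partial\Omega$, $\bZ\in U$, and all $i$. Consequently the boundary flux
\[
g(\bx;\bZ) := -\bL\sum_{i=1}^N \bk_i(\bx;\bz_i)\cdot\bn(\bx)
\]
extends to a holomorphic map $\bZ\mapsto g(\cdot;\bZ)\in C^{1,\alpha}(\partial\Omega;\mathbb{C})$ on $U$, and the Neumann compatibility $\int_{\partial\Omega}g(\cdot;\bZ)\,\de s=0$ is preserved by \eqref{divk} applied on $\Omega\setminus\bigcup_i B_\varepsilon(\bz_i)$ together with an explicit $\varepsilon\to 0^+$ computation using the form of $\bk_i$. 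Standard Schauder theory for the constant-coefficient elliptic equation $\div(\bL\nabla u)=0$ on the $C^{2,\alpha}$ domain $\Omega$ provides a bounded linear solution operator from zero-mean boundary data in $C^{1,\alpha}(\partial\Omega;\mathbb{C})$ to $C^{2,\alpha}(\overline\Omega;\mathbb{C})/\mathbb{C}$; composing with $g(\cdot;\bZ)$ and normalizing the additive constant by $\int_\Omega u_0\,\de\bx=0$ yields a holomorphic map $\bZ\mapsto u_0(\cdot;\bZ)$ on $U$.

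To conclude, interior analyticity of solutions to constant-coefficient elliptic equations (e.g., Morrey) shows that $u_0(\bx;\bZ)$ is jointly analytic in $(\bx,\bZ)$ on $\Omega\times U$; since for $\bZ\in K$ the evaluation point $\bz_\ell$ lies in a fixed compact subset of $\Omega$, the composition $\bZ\mapsto \nabla u_0(\bz_\ell;\bZ)$ is holomorphic on a complex neighborhood of $K$, and restriction to the real slice gives real-analyticity on $K$. I expect the main obstacle to be verifying the holomorphic dependence of the Neumann solution on the complexified data: one must check that Schauder estimates and Fredholm solvability pass through to the complex-valued setting (the PDE coefficients remain real and positive-definite, but data and solutions become complex). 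Once that is in place, the remainder is a routine composition of analytic maps with interior elliptic regularity.
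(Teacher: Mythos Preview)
Your approach is correct but substantially heavier than the paper's. You complexify the dislocation positions and push holomorphic parameter-dependence through the Neumann solution operator via Schauder theory, then invoke Morrey-type interior analyticity. This works, but the main technical point you flag---verifying that Fredholm solvability and Schauder estimates transfer to complex-valued data---is real work, and the passage from ``holomorphic in $\bZ$ with values in $C^{2,\alpha}(\overline\Omega)$'' plus ``analytic in $\bx$ for each $\bZ$'' to \emph{joint} analyticity in $(\bx,\bZ)$ needs a little more care than you give it (one clean route is to differentiate the Green's-function integral representation under the integral sign in complex $\bZ$ and interior $\bx$).

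The paper bypasses all of this with a single observation: after reducing to the isotropic case by a linear change of variables, the Green's-function representation \eqref{u_iso} makes $u_0(\bx;\bZ)$ a $C^2$ function of $(\bx,\bZ)\in\Omega^{N+1}\subset\R{2N+2}$ on compact subsets, and one can compute
\[
\Delta_{(\bx,\bZ)} u_0 \;=\; \Delta_\bx u_0 \;+\; \sum_{i=1}^N \Delta_{\bz_i} u_0 \;=\; 0,
\]
because $\Delta_\bx u_0 = 0$ by construction and $\Delta_{\bz_i}\bk_i(\by;\bz_i)=\bzero$ for $\by\in\partial\Omega$ (this is \eqref{laplaciank}, which you did not exploit). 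Harmonic $C^2$ functions are real-analytic, so $u_0$ is jointly analytic in all $2N+2$ real variables at once, and the composition $\bZ\mapsto\nabla u_0(\bz_\ell;\bZ)$ is analytic. No complexification, no solution-operator machinery---just harmonicity in the full variable. Your route is more general (it does not rely on the special harmonic structure of $\bk_i$ in the source variable), but for this particular problem the paper's argument is both shorter and more elementary.
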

\begin{proof}
Observe that if a smooth function $v$ satisfies the partial differential equation
${\rm div}\left( {\bf L}\nabla v\right) = 0$ in $\Omega$, then the function 
$w(x_1,x_2):=v(\lambda x_1,x_2)$ satisfies the  partial differential equation
$\Delta w = 0$ in an open set $U$. Hence, without loss of generality, we may assume that $\lambda = 1$ (i.e. $\bL = \mu \bI$), so that \eqref{laplaciank} and \eqref{u0Neumann} reduce to
\begin{equation}\label{k harmonic1}
	\Delta_{\by}\bk_j (\bx;\by)
	= \bzero, \qquad\quad\,(\bx,\by)\in\RR^2{\times}\R2,\;\bx\neq\by, 
\end{equation}
and, for fixed $\bz_1,\ldots,\bz_N\in\Omega$,
\begin{equation}
  \label{neumann}
\left\{  \begin{array}{l}
  \Delta_{\bx} u_0 (\bx;\bz_1,\ldots,\bz_N)= 0,
\qquad\qquad\qquad\qquad\qquad\qquad\quad\, \bx\in\Omega, \\ 
  \nabla_{\bx} u_0 (\bx;\bz_1,\ldots,\bz_N)\cdot \bn (\bx)
= -\sum_{i=1}^N 
\bk_i(\bx;\bz_i)\cdot \bn(\bx),\quad \bx\in\partial\Omega.
  \end{array}\right.
\end{equation}
A solution to \eqref{neumann} is given by
\begin{equation}
  \label{u_iso}
  u_0(\bx;\bz_1,\ldots, \bz_N) = \int_{\partial \Omega}G(\bx,\by) 
\sum_{i=1}^N \bk_i(\by;\bz_i)\cdot \bn(\by) \,\de s(\by),
\end{equation}
where $G$ is the Green's function for the Neumann problem.
Consider $u_0$ as a function in $\Omega^{N+1}\subset \mathbb{R}^{2N+2}$.
Fix $K_i \subset \subset \Omega$ for $i=0,\ldots,N$. 
If $(\bx,\bZ)\in K:=K_0\times K_1\times\cdots\times K_N$, 
then the integrand
in \eqref{u_iso} is uniformly bounded, and we can find the 
derivatives of $u_0$ with respect to each
$z_{i,m}$ by differentiating under the integral sign in \eqref{u_iso}.

Using \eqref{k harmonic1}, \eqref{neumann}, and \eqref{u_iso}
we have
\begin{align*}
  \Delta_{(\bx,\bZ)}u_0 &= \Delta_\bx u_0 + \Delta_{\bz_1}u_0 + \cdots +\Delta_{\bz_N}u_0\\
& = 0 + \sum_{i=1}^N \int_{\partial \Omega}G(\bx,\by) 
\Delta_{\bz_i}(\bk_i(\by;\bz_i)\cdot \bn(\by)) \,\de s (\by) = 0.
\end{align*}


Observe that in a small ball around $(\bx,\bZ)\in K$, $u_0$ is a $C^2$ function in each variable
 because the formula \eqref{u_iso} has singularities only
on the boundary. 
Since a harmonic $C^2$ function on an open set 
is analytic in that set (cf. \cite[Chapter 2]{Evans}), we deduce that 
$u_0$ is analytic in
the interior of $\Omega^{N+1}$, and thus
$u_0(\bz_i;\bZ)$ is also analytic (though, possibly no longer harmonic).
By \eqref{PKj1} we have that
$\bj_\ell$ is analytic away from the boundary and away from collisions,
because in this case each $\bk_i(\bz_\ell;\bz_i)$ is harmonic in both $\bz_\ell$ and $\bz_i$.
\end{proof}

Fix $\bZ^*\in \cA_\ell$. There are two maximizing glide directions
for $\bz_\ell$, denoted by $\bg_\ell^+(\bZ^*)$ and $\bg_\ell^-(\bZ^*)$
(i.e. $\cG_\ell(\bZ^*) = \{\bg_\ell^+(\bZ^*),\bg_\ell^-(\bZ^*)\}$, as defined in \eqref{f022}).
For simplicity we will write $\bg_\ell^\pm:=\bg_\ell^\pm(\bZ^*)$.
Let $B_h(\bZ^*)$ be a ball around
$\bZ^*$ with radius $h>0$ small enough so that $B_h(\bZ^*)\subset \cD(F)$,
and for any $\bZ \in B_h(\bZ^*)$ one of the following three
possibilities holds: $\cG_\ell(\bZ) = \{\bg_\ell^+\}$, $\cG_\ell(\bZ) = \{\bg_\ell^-\}$,
or $\cG_\ell (\bZ) = \{\bg_\ell^+,\bg_\ell^-\}$. Such  $h$ exists
because of the continuity of $\bj_\ell$ and the fact that
$\bj_\ell(\bZ^*) \neq {\bf 0}$ (cf. the discussion following \eqref{Amb_def}).
We denote by $\bg_0\in \R{2}$ the vector 
\begin{equation}
  \label{e0}
  \bg_0 := \bg_\ell^+ - \bg_\ell^-,
\end{equation}
which is a well-defined constant vector for $\bZ\in B_h(\bZ^*)$ (see the proof of Lemma \eqref{f105}).
Note that if $\partial^{\bbeta} \bj_\ell(\bZ^*)\cdot \bg_0\neq 0$ 
for some multi-index $\bbeta=(\beta_1,\ldots,\beta_N)\in\mathbf{N}_0^N$
with $|\bbeta|=1$, then $\cA_\ell$ is locally a smooth manifold. 
With $\bg_0$ as in \eqref{e0}, we define the \emph{singular sets} 
\begin{equation}
  \label{S_def}
\cS_\ell: = \{\bZ \in \cA_\ell\, : \,
\bj_\ell(\bZ)\cdot \bg_0 = 0, \, \nabla_{\bZ}(\bj_\ell(\bZ)\cdot \bg_0) = \bzero \},
\quad \ell =1,\ldots,N.
\end{equation}
Each $\cS_\ell$ contains the points
where $\cA_\ell$ could fail to be a manifold, and is an obstruction 
to uniqueness of solutions to \eqref{f010}.

We now estimate the Hausdorff dimension of the singular sets.
We adapt an argument from \cite{HanLin}, which 
follows \cite{caffarelli}; 
recall that $\cS_\ell\subset\RR^{2N}$, $\ell=1,\ldots,N$.
\begin{lemma}\label{lem:singularset}
Let $\cS_\ell$ be defined as in \eqref{S_def}. Then
${\rm dim}(\cS_\ell) \leq 2N-2$.
\end{lemma}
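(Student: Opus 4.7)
The plan is to reduce $\cS_\ell$ to the singular zero set of a nontrivial real analytic function of $2N$ variables, and then to apply the classical codimension-two estimate for such sets.

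I would first localize. Fix $\bZ^* \in \cS_\ell$, set $\bg_\ell^\pm := \bg_\ell^\pm(\bZ^*)$ and $\bg_0 := \bg_\ell^+ - \bg_\ell^-$. From the definition of $\cA_\ell$ one has $\bj_\ell(\bZ^*) \cdot \bg_\ell^+ = \bj_\ell(\bZ^*) \cdot \bg_\ell^- > \bj_\ell(\bZ^*) \cdot \bg$ for every other $\bg \in \cG$, so by continuity of $\bj_\ell$ (Lemma~\ref{lem:janalytic}) there is a ball $B_h(\bZ^*) \subset \cD(F)$ in which $\cG_\ell(\bZ) \subset \{\bg_\ell^+,\bg_\ell^-\}$ for every $\bZ$. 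On this ball, the function $\psi(\bZ) := \bj_\ell(\bZ) \cdot \bg_0$ is real analytic (Lemma~\ref{lem:janalytic}), and by \eqref{S_def} one has
\[
\cS_\ell \cap B_h(\bZ^*) \;\subset\; \{\bZ \in B_h(\bZ^*) : \psi(\bZ) = 0,\ \nabla_{\bZ}\psi(\bZ) = \bzero\}.
\]

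Next I would verify that $\psi$ is not identically zero on any connected open subset of $\cD(F)$. Since $\cD(F)$, the configuration space of $N$ distinct points in the connected planar domain $\Omega$, is connected, the identity principle for real analytic functions reduces this to producing a single configuration at which $\psi \neq 0$. To this end, I would fix all dislocations except some $\bz_j$ with $j \neq \ell$ and let $\bz_j$ approach $\bz_\ell$. From \eqref{PKj1} and \eqref{k_def}, the Peach--K\"ohler force is then dominated by $\bk_j(\bz_\ell;\bz_j)$, whose direction can be made to have a nonzero component along $\bg_0$ by rotating $\bz_\ell - \bz_j$ along a small circle around the origin. Hence $\psi \not\equiv 0$ on $\cD(F)$.

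Finally, I would invoke the standard fact, in the spirit of the stratification arguments in \cite{HanLin} and \cite{caffarelli}: for a real analytic $\psi$ on an open subset of $\R{n}$ that is not identically zero on any connected component, the Hausdorff dimension of $\{\psi = 0,\ \nabla \psi = \bzero\}$ is at most $n - 2$. The proof stratifies by the order of vanishing, which is finite at every point by analyticity: letting $\Sigma_k := \{\bZ : \partial^\balpha \psi(\bZ) = 0\text{ for } |\balpha| < k,\ \partial^\balpha \psi(\bZ) \neq 0\text{ for some } |\balpha| = k\}$, at any $\bZ_0 \in \Sigma_k$ one writes a nonzero $k$-th derivative as $\partial^\balpha = \partial_i \partial^\bbeta$ with $|\bbeta| = k - 1$, and $\partial^\bbeta \psi$ is then an analytic function vanishing at $\bZ_0$ with nonvanishing $i$-th partial there. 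The implicit function theorem confines $\Sigma_k$ locally to an analytic hypersurface of dimension $n-1$ on which $\psi$ does not vanish identically (by the identity principle applied to its analytic restriction), producing one more codimension. Taking a countable cover of $\cS_\ell$ by such balls $B_h(\bZ^*)$ and using countable stability of the Hausdorff dimension yields $\dim_{\cH}\cS_\ell \leq 2N - 2$.

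The step I expect to be the main obstacle is the non-triviality of $\psi$: without it the stratification collapses and the dimension estimate is vacuous. The argument above depends essentially on the singular behavior of $\bk_j$ as $\bz_j \to \bz_\ell$, which no smooth boundary-response term $\nabla u_0$ can cancel. A secondary technical point is the inductive verification that, inside the stratification, $\psi$ does not vanish identically on the auxiliary analytic hypersurface produced by the implicit function theorem; this follows by applying the identity principle to $\psi$ restricted to that hypersurface, but it is where the ``codimension two'' (as opposed to merely ``codimension one'') genuinely enters.
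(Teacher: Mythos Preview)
Your localization and the non-triviality argument for $\psi := \bj_\ell \cdot \bg_0$ are fine and parallel what the paper does (compare Lemma~\ref{lem:Mempty}). The gap is in the third step: the ``standard fact'' you invoke is false for general real analytic functions. Take $\psi(x_1,\ldots,x_n) = x_1^2$; then $\{\psi = 0,\ \nabla\psi = \bzero\} = \{x_1 = 0\}$ has dimension $n-1$, not $n-2$. In your sketched stratification this example fails exactly at the step where you assert that $\psi|_H \not\equiv 0$ ``by the identity principle'': with $k=2$ and $\partial^\bbeta\psi = 2x_1$, the hypersurface produced is $H = \{x_1 = 0\}$, and $\psi|_H$ is identically zero even though $\psi$ is not. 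The identity principle does not forbid an analytic function from vanishing on a lower-dimensional analytic set. So contrary to your own assessment, the main obstacle is not non-triviality but the codimension-two step itself.

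What the paper uses, and what your argument is missing, is additional structure of $\psi$: it is \emph{harmonic} in the $2N-2$ variables $\bz_1,\ldots,\bz_{\ell-1},\bz_{\ell+1},\ldots,\bz_N$ (this follows from \eqref{laplaciank} and the harmonicity of $u_0$ established in Lemma~\ref{lem:janalytic}). At a point where the first nonvanishing block of derivatives in those variables has order $m$, one finds $\bbeta$ with $|\bbeta|=m-2$ such that the Hessian $D^2_{\widehat\bZ_\ell}(\partial^\bbeta\psi)$ is a nonzero symmetric matrix with vanishing trace, hence with at least \emph{two} nonzero eigenvalues. After diagonalizing, this yields two first-order derivatives with linearly independent gradients, and the implicit function theorem applied to both simultaneously traps the stratum in a $(2N-2)$-dimensional manifold. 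Harmonicity is precisely what rules out the $x_1^2$ phenomenon; the references \cite{HanLin} and \cite{caffarelli} you cite concern nodal sets of solutions to elliptic equations, not arbitrary analytic functions, and it is the elliptic structure that carries the codimension-two estimate there as well.
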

\begin{proof}
Fix  $\ell \in \{1,\ldots,N\}$ and $\bZ^* \in \cA_\ell$.
As in the discussion above, set
$\bg_0 := \bg_\ell^+ - \bg_\ell  \in \R2 \setminus\{\bzero\}$,
where $\bg^\pm_\ell$ are uniquely defined in $B_h(\bZ^*)$ for
$h>0$ small enough.

We will be considering derivatives in all the $\bz_i$ directions
except for $i = \ell$. For this purpose, we introduce the notations
$\Delta_{\widehat \bZ_\ell}$, $\nabla_{\widehat \bZ_\ell}$, and
$D^2_{\widehat \bZ_\ell}$ to denote
the Laplacian, the gradient, and the Hessian
with respect to $\bz_1,\ldots,\bz_{\ell-1},
\bz_{\ell+1},\ldots,\bz_N$, respectively. We also write $N_\ell$ for the 
set of multi-indices $\balpha$ such that $\partial^{\balpha}$
does not contain any derivatives in the $\bz_\ell$ directions,
that is,
\begin{equation}
  \label{Nell}
N_\ell: = \{\balpha\in\mathbb{N}_0^{2N} \, : \, \balpha = (\balpha_1,\ldots,\balpha_{\ell-1},{\bf 0},
\balpha_{\ell+1},\ldots,\balpha_N)\}.
\end{equation}
For $m\geq 2$ we define
\begin{align*}
\widetilde M_\ell^m:=  \{ \bZ\, : \,  &\bj_\ell(\bZ)\cdot \bg_0 = 0, \
\partial^{\balpha}(\bj_\ell(\bZ)\cdot \bg_0) = 0 \text{ for all }\balpha \in N_\ell 
\text{ such that }|\balpha|< m, \nonumber \\
&\text{and } \partial^{\balpha}(\bj_\ell(\bZ)\cdot \bg_0) \neq 0
\text{ for some } \balpha \in N_\ell, \text{ with } |\balpha|=m \},\label{Mm}
\end{align*}
and also
\begin{equation}
  \label{Minfty2}
  \widetilde M_\ell^\infty : =  \{ \bZ\, : \,  \bj_\ell(\bZ)\cdot \bg_0 = 0, \
\partial^{\balpha}(\bj_\ell(\bZ)\cdot \bg_0) = 0 \text{ for all }\balpha \in N_\ell \}.
\end{equation}
Therefore 
\begin{align*}
  \cS_\ell 
\subset \{\bZ\, : \, 
\bj_\ell(\bZ)\cdot \bg_0 = 0, \, \nabla_{\widehat \bZ_\ell}(\bj_\ell(\bZ)\cdot \bg_0) = \bzero \}
= \widetilde M_\ell^\infty\cup\Bigg( \bigcup_{m\geq 2}\widetilde M_\ell^m \Bigg).
\end{align*}
By Lemma \ref{lem:Mempty} in the appendix,
we have that $\widetilde M_\ell^\infty = \emptyset$. 

\noindent Let $m\geq 2$ and let $\bZ_0 \in \widetilde M_\ell^m$.
Then there exists $\bbeta \in N_\ell$ such that $|\bbeta|=m-2$, and
\[
D^2_{\widehat \bZ_\ell}(\partial^{\bbeta}\bj_\ell(\bZ_0)\cdot \bg_0) \neq \bzero.
\]
Thus,
if we define $v(\bZ) := \partial^{\bbeta}\bj_\ell(\bZ)\cdot \bg_0$,
then  $D^2_{\widehat \bZ_\ell}v(\bZ_0)$
is a symmetric matrix that is not identically zero, so it must have
at least one non-zero eigenvalue, say $\lambda_i$. 

Observe that 
${\rm Trace}(D^2_{\widehat \bZ_\ell}v(\bZ))
=\Delta_{\widehat \bZ_\ell}(\partial^{\bbeta}\bj_\ell(\bZ)\cdot \bg_0) = 0$
because $\Delta_{\widehat \bZ_\ell}(\bj_\ell(\bZ)\cdot \bg_0) = 0$.
But 
${\rm Trace}(D^2_{\widehat \bZ_\ell}v(\bZ_0))
=\sum_{k=1}^{2N-2}\lambda_k$, where $\lambda_k$ are the eigenvalues, and
$\lambda_i \neq 0$, and so there is another non-zero eigenvalue, say $\lambda_j$.
Define $w(\bY) :=v(R\bY)$, where $R$ is a rotation matrix such that
\begin{equation*}
D^2_{\widehat \bY_\ell}w(\bY_0)=
\left(
\begin{array}
[c]{ccc}%
\lambda_{1} & \cdots & 0\\
\vdots & \ddots & \vdots\\
0 & \cdots & \lambda_{2N-2}%
\end{array}
\right),
\end{equation*} 
where $\bY_0:=R^{-1}\bZ_0$. 
Since $\lambda_i$ and $\lambda_j$ are different from zero,
there are two distinct multi-indices $\balpha_1,\balpha_2\in N_\ell$
with $|\balpha_k|=1$ such that
\[
\nabla_{\widehat \bY_\ell} \partial^{\balpha_k}w(\bY_0) \neq \bzero,\quad k=1,2.
\]
Hence, applying the Implicit Function Theorem to 
$\partial^{\balpha_1}w$ and $\partial^{\balpha_2}w$, we conclude
that $\cM = \{\bY\, :\, \partial^{\balpha_1}w(\bY)=0,\,
\partial^{\balpha_2}w(\bY)=0\}$ is a $(2N-2)$-dimensional manifold in a 
neighborhood of $\bY_0$.  Since $\widetilde M_\ell^m \subset \cM$,
we have that $\cS_\ell$ is contained in a 
countable union of manifolds with
dimension at most $2N-2$.
\end{proof}

We proved that the collection of \emph{singular points}
\begin{equation*}
  \mathcal{E}_{\rm sing} : = \bigcup_{\ell =1}^N \cS_\ell,
\end{equation*}
with $\cS_\ell$ defined in \eqref{S_def}, has dimension at most
$2N-2$. 
Further, each $\cA_\ell$ is a $(2N-1)$-dimensional smooth manifold away from points on $\cS_\ell$ but, 
in general, the set $\cA$ defined in \eqref{Amb_def} will not
be a manifold at points $\bZ\in\cA_\ell \cap \cA_j$ for $\ell \neq j$.
For this reason we need to exclude the set
\begin{equation}
  \label{intersections}
  \mathcal{E}_{\rm int} := \{\bZ \in\R{2N}\, : \, \bZ\in \cA_\ell\cap\cA_j\; 
\text{for some}\; \ell,j\in\{1,\ldots,N\}, \,\ell \neq j\}.
\end{equation}
Uniqueness at points in $\mathcal{E}_{\rm int} $ is significantly more delicate and will be discussed in  Section \ref{section intersections}.

If $\bZ \in \cA_\ell$, then $\bj_\ell(\bZ) \neq \bzero$, but it could be that $\bj_i(\bZ)=\bzero$ for some
$i\neq \ell$. This would mean that the glide direction for $\bz_i$ would  not be  well-defined at $\bZ$,
and could cause an obstruction to uniqueness. In view of this, we set
\begin{equation*}
\cE_{\rm zero}:=\left\{\bZ\in\cD(F): \bj_k(\bZ)=\bzero \text{ for some } k\in\{1,\ldots,N\}\right\}.
\end{equation*}
Reasoning as in Lemma \ref{lem:singularset},
$\dim(\cE_{\rm zero}\cap \{\nabla \bj_k\;\text{has rank $0$}\})\leq 2N-2$.
On the other hand, $\dim(\cE_{\rm zero}\cap \{\nabla \bj_k\;\text{has rank $2$}\})= 2N-2$, by the Implicit Function Theorem.
The set $\cE_{\rm zero}\cap \{\nabla \bj_k\;\text{has rank $1$}\}$ could have dimension at most $2N-1$.

%
%

For each $\ell\in\{1,\ldots,N\}$ define
\begin{equation}\label{initial}
\cI_\ell:=\cA_\ell\setminus(\cS_\ell\cup \mathcal{E}_{\rm int}\cup\cE_{\rm zero}).
\end{equation}

Let $\hat{\bZ}\in \cI_\ell$.
Since $\hat\bZ\notin\cS_\ell$ (see \eqref{S_def}),
there is an $r>0$
so that $B_r(\hat{\bZ})\cap\cA_\ell$ is a $(2N-1)$-dimensional smooth manifold, and $\cA_\ell$
divides $B_r(\hat{\bZ})$ into two disjoint, open sets $V^\pm$.
Since the functions $\bj_k$ are continuous by Lemma \ref{lem:janalytic} for all $k\in\{1,\ldots,N\}$,
and $\hat\bZ\notin\cE_{\rm zero}$, by taking $r$ smaller, if necessary, we can assume that
$\bj_k(\bZ)\neq\bzero$ for all $\bZ\in B_r(\hat\bZ)$ and for all $k\in\{1,\ldots,N\}$.
In turn, since $\hat\bZ\notin\cE_{\rm int}$, again by continuity and by taking $r$ even smaller, 
$\bg_k(\bZ)\equiv\bg_k(\hat\bZ)$ for all $\bZ\in B_r(\hat\bZ)$ and for all $k\neq\ell$,
and $\bg_\ell(\bZ)\equiv\bg_\ell^\pm(\hat\bZ)$ for $\bZ\in V^\pm$.
Let now
$\bef:B_r(\hat\bZ)\setminus \cA_\ell\to\R{2N}$,
$\bef=(\bef_1,\ldots,\bef_N)$, be the function defined by
\begin{equation}\label{910}
\begin{array}{ll}
\bef_k(\bZ):= (\bj_k(\bZ)\cdot\bg_k(\hat\bZ))\bg_k(\hat\bZ) & \text{if $k\neq\ell$} ,\\ [2pt]
\bef_\ell(\bZ):= (\bj_\ell(\bZ)\cdot\bg_\ell^\pm(\hat\bZ))\bg_\ell^\pm(\hat\bZ) & \text{if $\bZ\in V^\pm$}.
\end{array}
\end{equation}
We define $\bef^\pm$ as the restrictions of $\bef$ to $V^\pm$, and we extend them smoothly to the ball $B_r(\hat\bZ)$ by setting
$\widehat\bef_k^\pm(\bZ):=\bef_k(\bZ)$ if $k\neq\ell$ and $\widehat\bef_\ell^\pm(\bZ):=(\bj_\ell(\bZ)\cdot\bg_\ell^\pm(\hat\bZ))\bg_\ell^\pm(\hat\bZ)$.

Let $\bn(\hat{\bZ})$ denote the unit normal vector to $\cA_\ell$ at $\hat{\bZ}$ 
directed from $V^-$ to $V^+$. 
Motions starting in $V^+$ will move towards or away from $\cA_\ell$
according to whether $\widehat\bef^+(\hat\bZ) \cdot \bn(\hat{\bZ})<0$ or $\widehat\bef^+(\hat\bZ) \cdot \bn(\hat{\bZ})>0$.
Similarly, motions starting in $V^-$ will move towards or away from $\cA_\ell$
according to whether $\widehat\bef^-(\hat\bZ) \cdot \bn(\hat{\bZ})>0$ or $\widehat\bef^-(\hat\bZ) \cdot \bn(\hat{\bZ})<0$. 

We define the set of \emph{source points}
\begin{align*}
\cE_{\rm src}:=\large\{\bZ\in\Omega^N: \, & \bZ\in\cI_\ell \text{ for some }\ell\in\{1,\dots,N\},\, \\
&\widehat\bef^+(\bZ) {\cdot} \bn({\bZ})>0\;\text{and}\;\widehat\bef^-(\bZ) {\cdot}\bn({\bZ})<0\large\}.
\nonumber
\end{align*}
If $\hat\bZ\in\cE_{\rm src}$ there are two solution curves originating at $\hat\bZ$, 
one that moves into $V^+$ and one that moves into $V^-$.
Thus there is no uniqueness at source points.
\begin{theorem}[Local Uniqueness]\label{thm:uniqueness}
	Let $T>0$ and let $\bZ:[-T,T]\to\mathbb{R}^{2N}$ be a solution to \eqref{f010}.
	Assume that there exist $t_1\in [-T,T)$ and $\bZ_1\in\cI_\ell$, for some $\ell\in\{1,\ldots,N\}$, such that $\bZ(t_1)=\bZ_1$ and
\begin{equation}\label{hpuniqueness}
\widehat\bef^-(\bZ_1)\cdot\bn(\bZ_1)>0\qquad\text{or}\qquad\widehat\bef^+(\bZ_1)\cdot\bn(\bZ_1)<0,
\end{equation} 
where $\widehat\bef^\pm$ are the extensions of the functions $\bef^\pm$ defined in terms of the function $\bef$ given in \eqref{910} with $\hat\bZ=\bZ_1$.
Then right uniqueness holds for \eqref{f010} at the point $(t_1,\bZ_1)$.
\end{theorem}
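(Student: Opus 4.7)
The plan is to show that in a small neighborhood of $\bZ_1$ the convexified inclusion \eqref{f010} fits exactly the framework of \eqref{901}, so that Theorem \ref{thm:FilippovUn} applies verbatim and yields right uniqueness at $(t_1,\bZ_1)$.

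First, I would use $\bZ_1 \in \cI_\ell = \cA_\ell \setminus (\cS_\ell \cup \cE_{\rm int}\cup\cE_{\rm zero})$ to choose $r>0$ with $B_r(\bZ_1) \subset \cD(F)$ and such that: (i) every $\bj_k$ is analytic and nonzero on $B_r(\bZ_1)$, by Lemma \ref{lem:janalytic} together with $\bZ_1 \notin \cE_{\rm zero}$ and continuity; (ii) for each $k\neq \ell$ the argmax glide set is the singleton $\cG_k(\bZ) = \{\bg_k(\bZ_1)\}$, by $\bZ_1\notin\cE_{\rm int}$ and the argument at the end of the proof of Lemma \ref{f105}; (iii) for the index $\ell$, only the two directions $\bg_\ell^\pm(\bZ_1)$ can belong to $\cG_\ell(\bZ)$ on $B_r(\bZ_1)$, by the same continuity argument. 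Since $\bZ_1\in\cA_\ell$ forces $\bj_\ell(\bZ_1)\cdot\bg_0 = 0$ with $\bg_0 = \bg_\ell^+(\bZ_1)-\bg_\ell^-(\bZ_1)$, and since $\bZ_1\notin\cS_\ell$ rules out $\nabla_\bZ(\bj_\ell\cdot\bg_0)(\bZ_1)=\bzero$, the Implicit Function Theorem (applied after shrinking $r$ if needed) presents $S:=\cA_\ell\cap B_r(\bZ_1)$ as a $C^2$ hypersurface separating $B_r(\bZ_1)$ into two open sets $V^\pm:=\{\bZ\in B_r(\bZ_1) : \cG_\ell(\bZ)=\{\bg_\ell^\pm(\bZ_1)\}\}$ with a well-defined unit normal $\bn$ pointing from $V^-$ to $V^+$.

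Second, I would identify $\ch F$ on $B_r(\bZ_1)$ with the structure of \eqref{901}. Combining Lemma \ref{lem:hulls}, \eqref{f026}, and \eqref{f100} with steps (ii)--(iii): off $S$, each component $\ch F_k(\bZ)$ is the single point $\bef_k(\bZ)$ defined in \eqref{910}, so $\ch F(\bZ) = \{\bef(\bZ)\}$; on $S$, $\ch F_k(\bZ)$ remains the single point $\widehat\bef_k^\pm(\bZ)$ for $k\neq \ell$ (these two coincide), while $\ch F_\ell(\bZ) = \Sigma_\ell(\bZ) = \ch\{\widehat\bef_\ell^-(\bZ),\widehat\bef_\ell^+(\bZ)\}$. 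Componentwise this yields $\ch F(\bZ) = \ch\{\widehat\bef^-(\bZ),\widehat\bef^+(\bZ)\}$ on $S$, which is precisely the form of $H$ in \eqref{901} with the data $(\bef, V^\pm, S)$ just constructed.

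Third, the hypothesis \eqref{hpuniqueness} is exactly the sign condition in Theorem \ref{thm:FilippovUn} at $(t_1,\bZ_1)$. Applying that theorem to $H = \ch F$ on $B_r(\bZ_1)$ gives right uniqueness for \eqref{f010} at $(t_1,\bZ_1)$, as claimed. The main point of care in executing this plan is the passage from the product-of-convex-hulls description $\hat F$ in \eqref{f026} to the convex-hull-of-two-points description required by \eqref{901}; everything else is continuity bookkeeping and a direct invocation of Filippov's theorem.
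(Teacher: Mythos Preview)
Your proposal is correct and follows essentially the same route as the paper: the paper's proof is a one-liner that simply invokes the discussion preceding the theorem (which sets up $S=\cA_\ell\cap B_r(\bZ_1)$, the regions $V^\pm$, and the extensions $\widehat\bef^\pm$ exactly as you describe) and then applies Theorem~\ref{thm:FilippovUn}. You have reconstructed that preparatory discussion in more detail, including the key identification of $\ch F$ on $B_r(\bZ_1)$ with the form \eqref{901}, which the paper leaves implicit; there is no substantive difference in strategy.
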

\begin{proof}
By \eqref{hpuniqueness}, $\bZ_0\notin\cE_{\rm src}$, 
therefore, by the previous discussion, the result follows from Theorem \ref{thm:FilippovUn}.
\end{proof}
\begin{remark}\label{rem:afteruniqueness}
Existence time is limited by the possibility of collisions between dislocations, that is,  
$|\bz_i-\bz_j|\to 0$, or between a dislocation and $\partial \Omega$, that is,  
$\dist (\bz_i,\partial \Omega) \to 0$. Additionally, uniqueness is limited by 
possible intersections of $\bZ(t)$ with $\cS_\ell\cup \mathcal{E}_{\rm int}\cup\cE_{\rm zero}\cup\cE_{\rm src}$.
The ambiguity set $\cA$ is smooth except possibly on the
singular sets $\cS_\ell$, which are at most $(2N-2)$-dimensional by Lemma \ref{lem:singularset}, 
or points in $\cE_{\rm int}$.
\end{remark}

\subsection{Cross-Slip and Fine Cross-Slip} \label{sec:slip}
We expect to see two kinds of motion
at points where the force is not single-valued. If a dislocation point
$\bz_\ell$ is moving in the direction $\bg_\ell^-$ and 
the configuration $\bZ = (\bz_1,\ldots,\bz_N)$ arrives
at a point on $\cA_\ell$ where $\bg_\ell^\pm$ are two glide directions
that are equally favorable to $\bz_\ell$, then $\bz_\ell$ could abruptly 
transition from motion along $\bg_\ell^-$ to motion along
$\bg_\ell^+$. 
Such a motion  is called \emph{cross-slip}  
(see Figure \ref{fig:crossslip}).
Heuristically, cross-slip occurs when,  on one side of $\cA_\ell$,
the vector field $F$ (see \eqref{f010}) is pointing toward $\cA_\ell$, while the other
side $F$ is pointing away from $\cA_\ell$.
If the configuration $\bZ$ is in the region where
$F$ points towards $\cA_\ell$, then $\bZ$ approaches $\cA_\ell$ and
arrives at it in a finite time. The configuration 
then leaves $\cA_\ell$, moving into the region
where $F$ points away from $\cA_\ell$.

\begin{figure}[ht]
\begin{center}
\centering{
\labellist
\hair 2pt
\pinlabel $\mathbf{z}_1$ at 77 30
\pinlabel $\mathbf{z}_2$ at 25 122
\pinlabel $\mathbf{z}_3$ at 150 150
\pinlabel $\Omega$ at 220 20
\pinlabel $\mathcal{G}$ at 185 35
\pinlabel $\mathbb{R}^{2N}$ at 520 20
\pinlabel $\mathbf{Z}$ at 390 28
\pinlabel $V^+$ at 340 68
\pinlabel $V^-$ at 340 38
\pinlabel $\cA_1$ at 515 60
\pinlabel $\mathrm{(a)}$ at 100 -5
\pinlabel $\mathrm{(b)}$ at 420 -5
	\endlabellist				
\includegraphics[scale=.6]{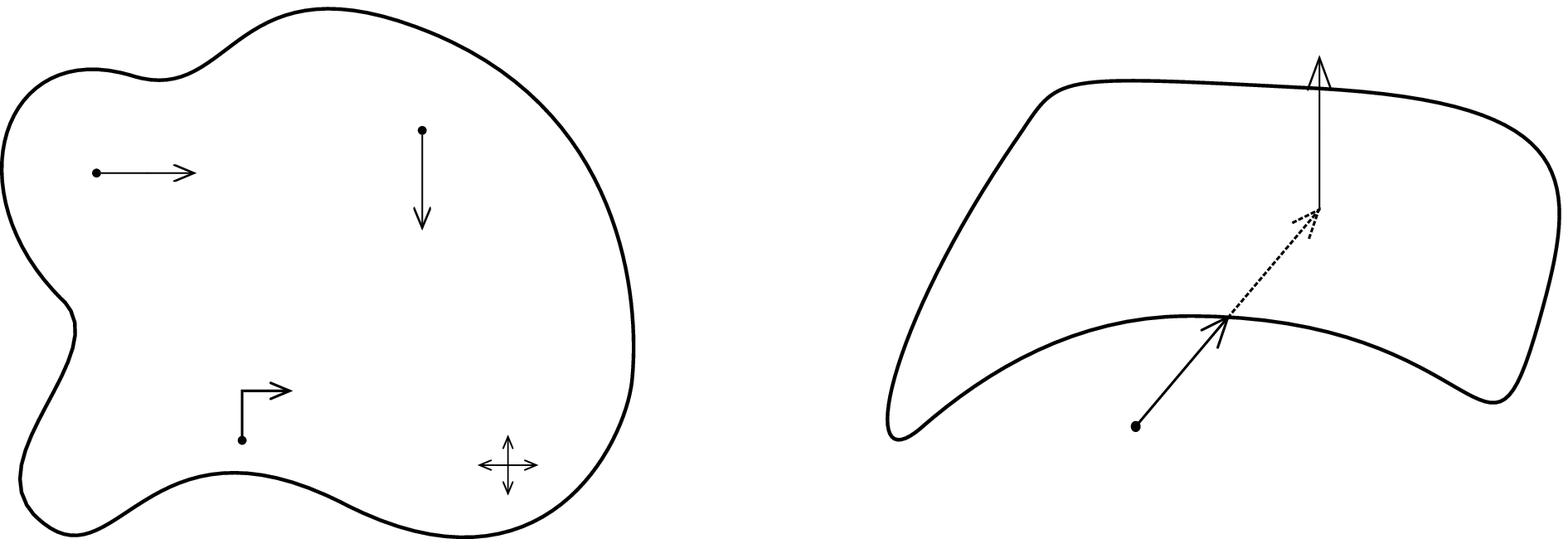}\\ 
}
\caption{Cross-slip. The glide directions are $\cG=\{\pm\be_1,\pm\be_2\}$, where $\be_i$ is the $i$-th basis vector. 
	In (a), dislocation $\bz_1\in\Omega$ is undergoing cross-slip, switching direction from $\bg_1^-=\be_2$ to $\bg_1^+=\be_1$, 
	while dislocations $\bz_2$ and $\bz_3$ glide normally along directions $\bg_2=\be_1$ and $\bg_3=-\be_2$, respectively.
	In (b) the same motion is represented in $\R{2N}$: the motion of $\bZ$ changes direction while crossing the surface $\cA_1$, 
	where the velocity field is multivalued. (Here, $N=3$.)}\label{fig:crossslip}
\end{center}
\end{figure}

Another possibility is that the vector field $F$ points towards
$\cA_\ell$ on both sides of $\cA_\ell$.
In this case, at a point on $\cA_\ell$, a motion by
$\bz_\ell$ in the 
$\bg_\ell^+$ direction will drive the configuration $\bZ$ 
to a region where $\bj_\ell$ is most closely aligned with
$\bg_\ell^-$, but then motion by $\bz_\ell$ 
along $\bg_\ell^-$ immediately forces $\bZ$ to
intersect the surface $\cA_\ell$ again. Motion by $\bz_\ell$ along $\bg_\ell^-$
then pushes $\bZ$ into a region where $\bj_\ell$ is most closely
aligned with $\bg_\ell^+$, which
forces $\bZ$ back to $\cA_\ell$. A motion such as  this one on a finer
and finer scale will appear as motion \emph{along} the surface
$\cA_\ell$. Following \cite{Gurtin}, 
such a motion is called \emph{fine cross-slip}. 
See Figure \ref{fig:finecrossslip}, where the dislocation $\bz_1$ is undergoing fine cross-slip. 
In part (a) it is shown how it follows a curve $l$ rather than one of
the glide directions $\bg\in\cG$.
In part (b) the same phenomenon is shown in $\R{2N}$ ($N=3$), where the point $\bZ$ hits $\cA_1$ and starts moving along it.

\begin{figure}[ht]
\begin{center}
\centering{
\labellist
\hair 2pt
\pinlabel $\mathbf{z}_1$ at 77 30
\pinlabel $\mathbf{z}_2$ at 25 122
\pinlabel $\mathbf{z}_3$ at 150 150
\pinlabel $\Omega$ at 220 20
\pinlabel $\mathcal{G}$ at 185 35
\pinlabel $\mathbb{R}^{2N}$ at 520 20
\pinlabel $\mathbf{Z}$ at 340 15
\pinlabel $V^+$ at 300 68
\pinlabel $V^-$ at 300 18
\pinlabel $\cA_1$ at 520 90
\pinlabel $l$ at 110 60
\pinlabel $\mathrm{(a)}$ at 100 -5
\pinlabel $\mathrm{(b)}$ at 420 -5
	\endlabellist				
\includegraphics[scale=.6]{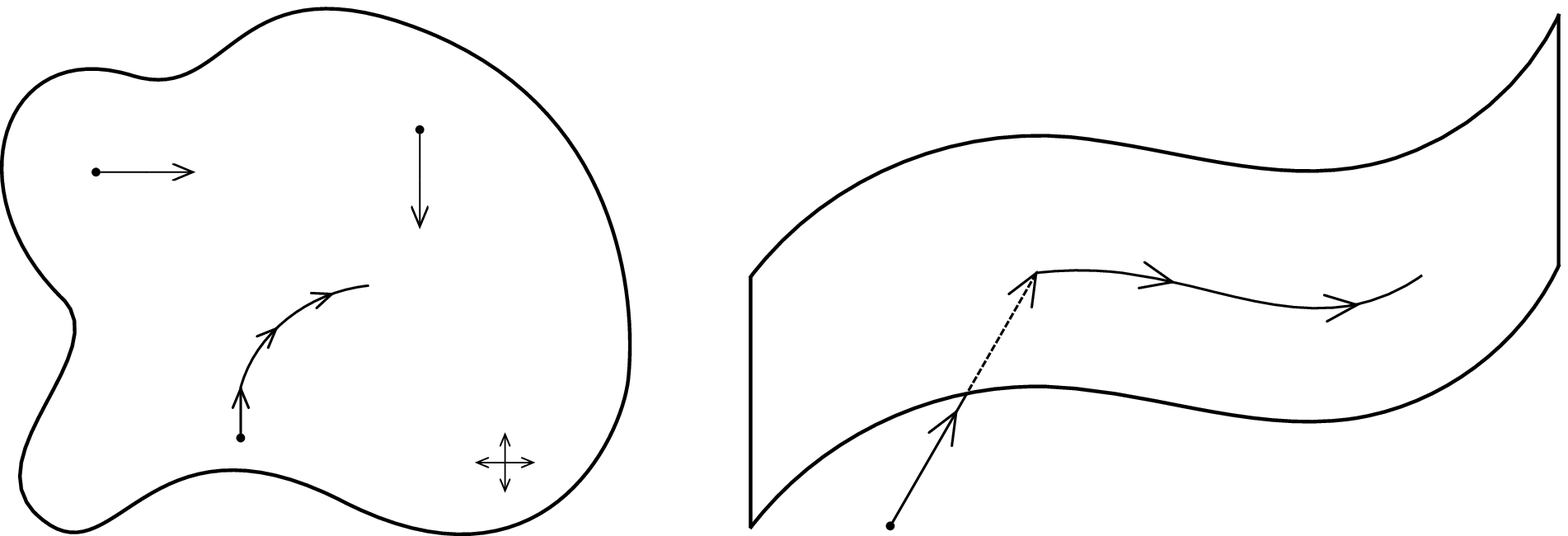}\\ 
}
\caption{Fine cross-slip. Let $\cG$ be the same as in Figure \ref{fig:crossslip}. 
	In (a), dislocation $\bz_1\in\Omega$ is undergoing fine cross-slip, switching direction from $\bg_1^-=\be_2$ to a curved one which is not in $\cG$, 
	while dislocations $\bz_2$ and $\bz_3$ glide normally along directions $\bg_2=\be_1$ and $\be_3=-\be_2$, respectively.
	In (b) the same motion is represented in $\R{2N}$: the motion
        of $\bZ$, after hitting the surface $\cA_1$ continues on the
        surface following the tangent direction. (Here, $N=3$.)
	}\label{fig:finecrossslip}
\end{center}
\end{figure}


The following theorems formalize the behaviors described above 
and provide an analytical validation of the notions of cross-slip and fine cross-slip introduced in \cite{Gurtin}.
We refer to the discussion preceding Theorem \ref{thm:uniqueness} for the definitions of $\bn(\bZ)$ and $V^\pm$ for $\bZ\in\cI_\ell$.
\begin{theorem}[Cross-Slip]\label{cs003}
	Let $T>0$ and let $\bZ:[-T,T]\to\mathbb{R}^{2N}$ be a solution to \eqref{f010}.
	Assume that there exist $t_1\in (-T,T)$ and $\bZ_1\in\cI_\ell$, for some $\ell\in\{1,\ldots,N\}$, such that $\bZ(t_1)=\bZ_1$,  
	\begin{equation}\label{hpcs1}
	\widehat\bef^-(\bZ_1)\cdot\bn(\bZ_1)>0,\qquad\text{and}\qquad\widehat\bef^+(\bZ_1)\cdot\bn(\bZ_1)>0,
	\end{equation}
	where $\bef$ is the function defined in \eqref{910}.
Then uniqueness holds for \eqref{f010} at the point $(t_1,\bZ_1)$ and  the  solution passes from $V^-$ to $V^+$. 
Similarly, if
\begin{equation}\label{hpcs2}
\widehat\bef^-(\bZ_1)\cdot\bn(\bZ_1)<0\qquad\text{and}\qquad\widehat\bef^+(\bZ_1)\cdot\bn(\bZ_1)<0,
\end{equation} 
then uniqueness holds for \eqref{f010} at the point $(t_1,\bZ_1)$ and the  solution passes from $V^+$ to $V^-$. 
\end{theorem}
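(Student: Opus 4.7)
The plan is to reduce the statement directly to Filippov's cross-slip result, Theorem \ref{cs001}. To do this, I would first verify that the setup near $\bZ_1$ fits the abstract framework of \eqref{901}. Since $\bZ_1 \in \cI_\ell$ (see \eqref{initial}), the construction preceding Theorem \ref{thm:uniqueness} provides $r>0$ such that inside $B_r(\bZ_1)$ the ambiguity surface $\cA_\ell$ is a $(2N-1)$-dimensional $C^2$ manifold separating $B_r(\bZ_1)$ into two open regions $V^\pm$; moreover, $\bj_k(\bZ) \neq \bzero$ with $\bg_k(\bZ) \equiv \bg_k(\bZ_1)$ for every $k \neq \ell$ and every $\bZ \in B_r(\bZ_1)$, while $\bg_\ell(\bZ) \equiv \bg_\ell^\pm(\bZ_1)$ on $V^\pm$.

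The next step is to identify $\ch F(\bZ)$ in $B_r(\bZ_1)$ with the Filippov convex hull of \eqref{901}. By Lemma \ref{lem:hulls}, $\ch F = \ch F_1 \times \cdots \times \ch F_N$. Because $F_k$ is single-valued and smooth for $k\neq \ell$ on $B_r(\bZ_1)$, its $k$-th slot contributes the same smooth term $\widehat{\bef}_k^\pm(\bZ) = \bef_k(\bZ)$ on both sides of $\cA_\ell$, so the jump lives entirely in the $\ell$-th component. On $V^\pm$ the inclusion reduces to the single-valued smooth field $\widehat{\bef}^\pm$ from \eqref{910}, while on $\cA_\ell \cap B_r(\bZ_1)$ the $\ell$-th slot of $\ch F_\ell(\bZ)$ is exactly the segment $\Sigma_\ell(\bZ)$ with endpoints $(\bj_\ell(\bZ)\cdot \bg_\ell^\pm)\bg_\ell^\pm$, which coincides with the segment $\mathrm{co}\{\widehat{\bef}^-(\bZ),\widehat{\bef}^+(\bZ)\}$. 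Thus the restriction of \eqref{f010} to $B_r(\bZ_1)$ is exactly of the form \eqref{901} with $\bef = \bef$, $V = B_r(\bZ_1)$, $S = \cA_\ell \cap B_r(\bZ_1)$.

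At this point I would simply invoke Theorem \ref{cs001} at $(t_1,\bZ_1)$. Under \eqref{hpcs1} it yields uniqueness and a solution crossing from $V^-$ to $V^+$, and symmetrically under \eqref{hpcs2} uniqueness with crossing from $V^+$ to $V^-$, which is the desired conclusion. I do not anticipate any substantial obstacle: the entire analytic content (smoothness of $\cA_\ell$ off the singular set, continuity of $\ch F$, nonvanishing of $\bj_k$ for $k\neq\ell$, local constancy of the competing glide directions) has been prepared in the preceding subsections, and the only verification required is the identification of $\ch F$ with the Filippov convex-hull form \eqref{901}, which is a direct consequence of Lemma \ref{lem:hulls} combined with the local constancy of glide directions on $B_r(\bZ_1)$ away from $\cA_\ell$.
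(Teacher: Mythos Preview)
Your proposal is correct and follows exactly the paper's approach: the paper's own proof is the single sentence ``Since $\widehat\bef^\pm$ are $C^1$ extensions of $\bef^\pm:=\bef\big|_{V^\pm}$, the result follows from Theorem \ref{cs001},'' and you have simply spelled out in more detail why the local setup near $\bZ_1\in\cI_\ell$ fits the Filippov framework \eqref{901} before invoking Theorem \ref{cs001}.
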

\begin{proof}
Since $\widehat\bef^\pm$ are $C^1$ extensions of $\bef^\pm:=\bef\big|_{V^\pm}$, the result follows from Theorem \ref{cs001}.
\end{proof}

\begin{theorem}[Fine Cross-Slip]\label{cs004} 
		Let $T>0$ and let $\bZ:[-T,T]\to\mathbb{R}^{2N}$ be a solution to \eqref{f010}.
		Assume that there exist $t_1\in (-T,T)$ and $\bZ_1\in\cI_\ell$, for some $\ell\in\{1,\ldots,N\}$, such that $\bZ(t_1)=\bZ_1$,  
\begin{equation*}
\widehat\bef^-(\bZ_1)\cdot\bn(\bZ_1)>0\qquad\text{and}\qquad\widehat\bef^+(\bZ_1)\cdot\bn(\bZ_1)<0,
\end{equation*}
where $\bef$ is the function defined in \eqref{910}. 
Then right uniqueness holds for \eqref{f010} at the point $(t_1,\bZ_1)$ and there exists $\delta>0$  such that  $\bZ$ belongs to $\cA_\ell$ and solves the ordinary differential equation for all $t\in [t_1,t_1+\delta]$,
\begin{equation*}
\dot\bZ=\bef^0(\bZ)\in\ch F(\bZ),\quad\text{where}\quad 
\bef^0(\bZ):=\alpha(\bZ)\widehat\bef^+(\bZ)+(1-\alpha(\bZ))\widehat\bef^-(\bZ)
\end{equation*}
 and 
$\alpha(\bZ)\in(0,1)$ is defined by
\begin{equation*}
\alpha(\bZ)
:=\frac{\widehat\bef^-(\bZ)\cdot\bn(\bZ)}{\widehat\bef^-(\bZ)\cdot\bn(\bZ)-\widehat\bef^+(\bZ)\cdot\bn(\bZ)}.
\end{equation*}
\end{theorem}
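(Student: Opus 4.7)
The plan is to recognize that Theorem \ref{cs004} is the concrete counterpart, in the dislocation setting, of the abstract fine cross-slip phenomenon captured by Corollary \ref{cor:f0} and Remark \ref{rem:f0}. The entire argument reduces to checking that the local structure of the set-valued map $\ch F$ near $\bZ_1$ matches the template \eqref{901}, and then invoking the abstract results verbatim.

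First I would fix the local picture around $\bZ_1$. Because $\bZ_1\in\cI_\ell$, the definition \eqref{initial} excludes $\cS_\ell$, $\mathcal{E}_{\rm int}$, and $\cE_{\rm zero}$, so there is a radius $r>0$ with $B_r(\bZ_1)\subset\cD(F)$ in which: (i) $\cA_\ell\cap B_r(\bZ_1)$ is a $C^2$ hypersurface of dimension $2N-1$ separating $B_r(\bZ_1)$ into open sets $V^\pm$ with smooth normal $\bn$; (ii) for every $k\neq \ell$, the choice $\bg_k\equiv\bg_k(\bZ_1)$ is single-valued and constant; (iii) on $V^\pm$ the $\ell$-th glide direction is the constant $\bg_\ell^\pm$. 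The vector field $\bef$ of \eqref{910} is therefore smooth on $V^+$ and on $V^-$ and admits $C^1$ extensions $\widehat\bef^\pm$ to $B_r(\bZ_1)$. Combining this with Lemma \ref{lem:hulls} and \eqref{f100}, the restriction of $\ch F$ to $B_r(\bZ_1)$ is exactly of the form \eqref{901}, with $\cA_\ell$ in the role of the surface $S$.

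Second, I would invoke Corollary \ref{cor:f0} directly. The hypotheses $\widehat\bef^-(\bZ_1)\cdot\bn(\bZ_1)>0$ and $\widehat\bef^+(\bZ_1)\cdot\bn(\bZ_1)<0$ are precisely condition \eqref{aaa}, so Corollary \ref{cor:f0} provides $\delta>0$ and a unique solution of \eqref{f010} on $[t_1,t_1+\delta)$ starting at $\bZ_1$ that is confined to $\cA_\ell$. This confinement, together with the uniqueness statement in Corollary \ref{cor:f0} (whose proof already uses Lemma \ref{lem:pass} to rule out any competing solution escaping into $V^+$ or $V^-$), gives the claimed right uniqueness for \eqref{f010} at $(t_1,\bZ_1)$.

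Third, I would identify the ODE satisfied on $\cA_\ell$ by repeating the computation of Remark \ref{rem:f0}. Since $\bZ(t)\in\cA_\ell$ for $t\in[t_1,t_1+\delta)$, differentiating a local defining equation for $\cA_\ell$ yields the tangency relation $\dot\bZ(t)\cdot\bn(\bZ(t))=0$. By Lemma \ref{lem:hulls} we have $\dot\bZ(t)\in\ch\{\widehat\bef^-(\bZ(t)),\widehat\bef^+(\bZ(t))\}$, so $\dot\bZ(t)=\alpha\,\widehat\bef^+(\bZ(t))+(1-\alpha)\widehat\bef^-(\bZ(t))$ for some $\alpha\in[0,1]$. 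Dotting with $\bn(\bZ(t))$ and using that $\widehat\bef^-\cdot\bn$ and $\widehat\bef^+\cdot\bn$ have opposite signs in a neighborhood of $\bZ_1$ (by continuity of $\widehat\bef^\pm$), one solves uniquely for $\alpha=\alpha(\bZ(t))\in(0,1)$ and recovers the formula stated in the theorem.

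I do not expect a genuine obstacle: the content of the theorem is abstract Filippov theory in disguise, and the set $\cI_\ell$ has been engineered in \eqref{initial} precisely so that the standard template applies. The only substantive verification is (i) of the first paragraph, namely that the single-valuedness of $\bg_k$ for $k\neq\ell$ persists on a whole ball around $\bZ_1$, which is where the exclusion of $\mathcal{E}_{\rm int}$ and $\cE_{\rm zero}$ in the definition of $\cI_\ell$, combined with the continuity of $\bj_k$ from Lemma \ref{lem:janalytic}, is used.
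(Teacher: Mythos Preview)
Your proposal is correct and follows exactly the paper's approach: the paper's proof is the single sentence ``The result follows from Corollary \ref{cor:f0},'' and your argument simply spells out why the hypotheses of that corollary (and Remark \ref{rem:f0}) are met once $\bZ_1\in\cI_\ell$. The additional detail you provide about why the template \eqref{901} applies is implicit in the discussion preceding Theorem \ref{thm:uniqueness}, so you are not adding anything new, just making the reduction explicit.
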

\begin{proof}
The result follows from Corollary \ref{cor:f0}.
\end{proof}
Note that the cross-slip and fine cross-slip trajectories that we have described
in Theorems \ref{cs003} and \ref{cs004} satisfy the conditions for right uniqueness
in Theorem \ref{thm:uniqueness}.  Specifically,
if \eqref{hpcs1} or \eqref{hpcs2} holds, then \eqref{hpuniqueness} holds (i.e., $\bZ_1\notin\cE_{\rm src}$).



\section{More on Fine Cross-Slip}\label{section intersections}

In Subsection \ref{sec:setting} we have discussed uniqueness only in the special case 
in which $\bef$ is discontinuous across a $(d-1)$-dimensional hypersurface. The case when two or more such $(d-1)$-dimensional hypersurfaces meet is significantly more involved and  can lead to non-uniqueness of solutions
for Filippov systems (see, e.g., \cite{Dieci}).

In our setting, this situation arises at points in the set $\cE_{\rm int}$ defined in \eqref{intersections}. Indeed, 
in  Theorem \ref{cs004} we assumed that $\bZ_1$ does not belong to the intersection of two hypersurfaces 
(see \eqref{intersections} and \eqref{initial}). In this section we study fine cross-slip in the case in which  $\bZ_1$ belongs to $\mathcal{E}_{\rm int}$. 
For simplicity, we consider only the case in which only two hypersurfaces intersect at a point. 
See Figure \ref{fig:dfcs}.

\begin{figure}[ht]
\begin{center}
\centering{
\labellist
\hair 2pt
\pinlabel $\mathbf{z}_1$ at 65 30
\pinlabel $\mathbf{z}_2$ at 25 122
\pinlabel $\mathbf{z}_3$ at 200 65
\pinlabel $\Omega$ at 220 20
\pinlabel $\mathcal{G}$ at 185 35
\pinlabel $\mathbb{R}^{2N}$ at 520 20
\pinlabel $\mathbf{Z}$ at 340 15
\pinlabel $\cA_1$ at 535 110
\pinlabel $\cA_3$ at 450 149
\pinlabel $\mathrm{(a)}$ at 100 -5
\pinlabel $\mathrm{(b)}$ at 420 -5
	\endlabellist				
\includegraphics[scale=.6]{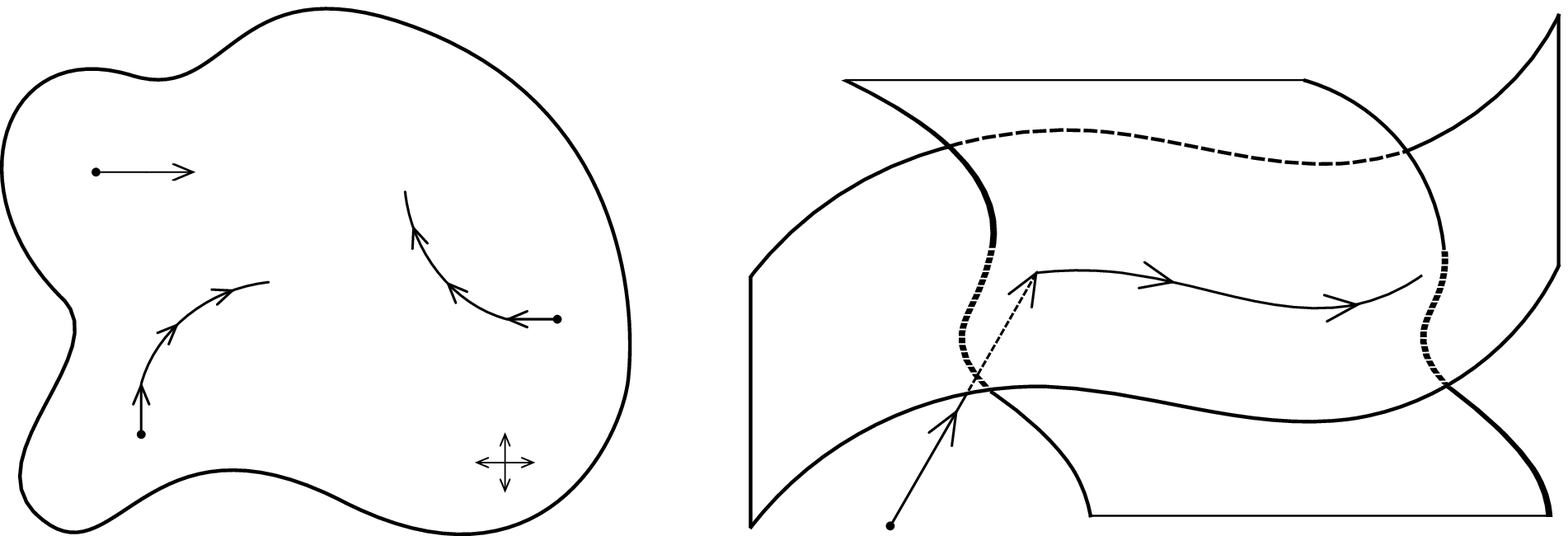}\\ 
}
\caption{Simultaneous fine cross-slip. Let $\cG$ be the same as in Figure \ref{fig:crossslip}. 
	In (a), dislocations $\bz_1,\bz_3\in\Omega$ are undergoing fine cross-slip, switching directions from $\bg_1^-=\be_2$ and $\bg_3^-=-\be_1$, respectively, to curved ones $l_1,l_3$ which are not in $\cG$,
	while dislocation $\bz_2$ glides normally along direction $\bg_2=\be_1$.
	In (b) the same motion is represented in $\R{2N}$: the motion of $\bZ$, after hitting $\cA_1\cap\cA_3$, continues on the intersection of the two surfaces.
	}\label{fig:dfcs}
\end{center}
\end{figure}

Assume that there exists  $\bZ_1 \in \cA_\ell \cap \cA_k$ for $k\neq \ell$, with 
$\bZ_1 \notin \cA_i$ for $i\neq k,\ell$ and
$\bZ_1\notin \cE_{{\rm zero}}\cup \cS_\ell \cup \cS_k$.
Consider the case of fine cross-slip conditions along
both $\cA_\ell$ and $\cA_k$. Specifically, at $\bZ_1$, the 
vectors $\bj_\ell(\bZ_1)$ and $\bj_k(\bZ_1)$ are well-defined and
bisect two maximally dissipative glide directions $\bg_\ell^\pm$ and
$\bg_k^\pm$, respectively. By assumption, the other $\bj_i(\bZ_1)$ have uniquely defined
maximally dissipative glide directions. By Lemma \ref{lem:hulls}, the set-valued
vector field has the form
$  {\rm co}\, F (\bZ_1) = ( {\rm co}\,F_1(\bZ_1),\ldots, {\rm
  co}\,F_N(\bZ_1))$, 
where (see \eqref{f100}),
\begin{equation*}
 {\rm co}\,F_i(\bZ_1) = F_i(\bZ_1) =\{(\bj_i(\bZ_1)\cdot  \bg_i(\bZ_1)) \bg_i(\bZ_1)\}
\end{equation*}
for $i \neq k,\ell$ and
\begin{equation*}
 {\rm co}\,F_i(\bZ_1) = \{s_i (\bj_i(\bZ_1)\cdot  \bg_i^+(\bZ_1)) \bg_i^+(\bZ_1) +
(1-s_i)(\bj_i(\bZ_1)\cdot  \bg_i^-(\bZ_1)) \bg_i^-(\bZ_1),
s_i \in [0,1]\},
\end{equation*}
for $i=k,\ell$.
Additionally,  there is a
ball $B_h(\bZ_1)\subset\mathcal{D}(F)$ that is separated into two open sets $V_\ell^\pm$ by $\cA_\ell$,
such that, for   $\bZ\in V_\ell^+$, 
$F_\ell(\bZ) =  \{(\bj_\ell(\bZ_1)\cdot  \bg_\ell^+,(\bZ_1))\bg_\ell^+,(\bZ_1)\}$
and for   $\bZ\in V_\ell^-$, 
$F_\ell(\bZ) =  \{(\bj_\ell(\bZ_1)\cdot  \bg_\ell^-,(\bZ_1))\bg_\ell^-,(\bZ_1)\}$.
Similarly,  $B_h(\bZ_1)$ is separated into two open sets $V_k^\pm$ by $\cA_k$
where the corresponding equalities hold.
Since we are avoiding singular points, let $\bn_\ell(\bZ_1)$ and $\bn_k(\bZ_1)$ denote
the normals to $\cA_\ell$ and $\cA_k$ at $\bZ_1$, where $\bn_i(\bZ_1)$ points
from $V_i^-$ to $V_i^+$ for $i=k,\ell$.

Now, at the intersection of two surfaces, $B_h$ is divided into four regions, so there will
be four vector fields that will need to satisfy some projection conditions in order
for fine cross-slip to occur. 
For $i\neq k,\ell$, set $\bef_i(\bZ): = (\bj_i(\bZ)\cdot  \bg_i(\bZ)) \bg_i(\bZ)$
for $\bZ\in B_h(\bZ_1)$.
Set $\bef_k^\pm(\bZ) := (\bj_k(\bZ)\cdot  \bg_k^\pm(\bZ)) \bg_k^\pm(\bZ)$ for $\bZ\in V_k^\pm$
and $\bef_\ell^\pm(\bZ) := (\bj_\ell(\bZ)\cdot  \bg_\ell^\pm(\bZ)) \bg_\ell^\pm(\bZ)$
for $\bZ\in V_\ell^\pm$. By assumption, $\bef_k^\pm$ and $\bef_\ell^\pm$ can 
be extended in a $C^1$ way to $B_h(\bZ_1)$, we denote these extensions by $\hbf_k^\pm$ and $\hbf_\ell^\pm$.
Define the extended vector fields in $B_h(\bZ_1)$,
\begin{subequations}\label{f plus minus}
\begin{align}
  \bef^{(+,+)}(\bZ) &= (\bef_1(\bZ),\ldots,\hbf_k^+(\bZ),\ldots,\hbf_\ell^+(\bZ),\ldots,\bef_N(\bZ)), \\
  \bef^{(+,-)}(\bZ) &= (\bef_1(\bZ),\ldots,\hbf_k^+(\bZ),\ldots,\hbf_\ell^-(\bZ),\ldots,\bef_N(\bZ)),\\
  \bef^{(-,+)}(\bZ) &= (\bef_1(\bZ),\ldots,\hbf_k^-(\bZ),\ldots,\hbf_\ell^+(\bZ),\ldots,\bef_N(\bZ)),\\
  \bef^{(-,-)}(\bZ) &= (\bef_1(\bZ),\ldots,\hbf_k^-(\bZ),\ldots,\hbf_\ell^-(\bZ),\ldots,\bef_N(\bZ)).
\end{align}
\end{subequations}
The fine cross-slip conditions are that the surfaces
$\cA_k$ and $\cA_\ell$ are attracting at $\bZ_1$, so that
\begin{subequations}\label{conditions}
\begin{align}
&  \bef^{(+,+)}(\bZ_1)\cdot \bn_k(\bZ_1) <0,  \qquad \bef^{(+,+)}(\bZ_1)\cdot \bn_\ell(\bZ_1) <0,
\label{cond1} \\
& \bef^{(+,-)}(\bZ_1)\cdot \bn_k(\bZ_1) <0, \qquad \bef^{(+,-)}(\bZ_1)\cdot \bn_\ell(\bZ_1) > 0,  
\label{cond2}\\
&  \bef^{(-,+)}(\bZ_1) \cdot \bn_k(\bZ_1) >0, \qquad  \bef^{(-,+)}(\bZ_1) \cdot \bn_\ell(\bZ_1) <0, 
\label{cond3} \\
& \bef^{(-,-)}(\bZ_1) \cdot \bn_k(\bZ_1) >0, \qquad  \bef^{(-,-)}(\bZ_1) \cdot \bn_\ell(\bZ_1) >0.
\label{cond4}
\end{align}
\end{subequations}
By taking $h$ smaller, if necessary, we can assume that $\bZ \notin \cA_i$ for $i\neq k,\ell$,  that $\bZ\notin \cE_{{\rm zero}}\cup \cS_\ell \cup \cS_k$, and that
\eqref{cond1}-\eqref{cond4} continue to hold for all $\bZ\in B_h(\bZ_1)$. 

We now show that the only possible motion is along the intersection $\cA_k\cap\cA_\ell$. 

\begin{theorem}\label{theorem intersections} Let $T>0$ and let $\bZ:[-T,T]\to\mathbb{R}^{2N}$ be a solution to \eqref{f010}.
Assume that there exist $t_1\in [-T,T)$ and $\bZ_1$ as above 
such that $\bZ(t_1)=\bZ_1$. 
Then there exists $\delta>0$  such that $\bZ$ is unique in $ [t_1,t_1+\delta]$ and $\bZ(t)$ belongs to $\cA_k\cap \cA_{\ell}$ for all $t\in [t_1,t_1+\delta]$. 
 \end{theorem}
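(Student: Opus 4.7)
The plan is to adapt the one-surface fine cross-slip argument from Corollary~\ref{cor:f0} to the setting of two transversely intersecting ambiguity surfaces. I would break the work into three steps: (i) produce a solution, (ii) show that every solution is trapped on $\cA_k\cap\cA_\ell$ for short times, and (iii) derive uniqueness of the trapped motion.

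For step (i), I would apply Theorem~\ref{thm:FilippovEx} together with Corollary~\ref{f1071}, which yields a solution $\bZ$ on $[t_1,t_1+\delta_0]$ for some $\delta_0>0$. Shrinking $\delta_0$ and the reference ball $B_h(\bZ_1)$ as needed, I can assume the orbit stays in $B_h(\bZ_1)$, that the sign conditions \eqref{cond1}--\eqref{cond4} persist throughout $B_h(\bZ_1)$ by continuity, and that each $\hbf^{(\epsilon_k,\epsilon_\ell)}$ is $C^1$ there.

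Step (ii) is the core of the argument and mimics the proof of Corollary~\ref{cor:f0}. Suppose, for contradiction, that $\bZ(t_*)\notin\cA_k\cap\cA_\ell$ for some $t_*\in(t_1,t_1+\delta_0]$ arbitrarily close to $t_1$. Then $\bZ(t_*)$ lies in one of the four open quadrants $V_k^{\epsilon_k}\cap V_\ell^{\epsilon_\ell}$; say $V_k^+\cap V_\ell^+$. I would set
\begin{equation*}
s_*:=\sup\{s\in[t_1,t_*):\bZ(s)\notin V_k^+\cap V_\ell^+\},
\end{equation*}
so that $\bZ(s_*)\in\cA_k\cup\cA_\ell$ and $\bZ$ satisfies the smooth ODE $\dot\bZ=\hbf^{(+,+)}(\bZ)$ on $(s_*,t_*]$. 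If $\bZ(s_*)\in\cA_k$, the inequalities $\hbf^{(+,+)}\cdot\bn_k<0$ (from \eqref{cond1}) and $\hbf^{(-,+)}\cdot\bn_k>0$ (from \eqref{cond3}) match the hypotheses of Lemma~\ref{lem:pass} applied to the hypersurface $\cA_k$, forcing $\bZ(t)\in V_k^-$ for $t$ slightly above $s_*$, in contradiction with $\bZ\in V_k^+\cap V_\ell^+$ on $(s_*,t_*]$. The case $\bZ(s_*)\in\cA_\ell$ (and the corner case $\bZ(s_*)\in\cA_k\cap\cA_\ell$) is handled symmetrically, and the three remaining quadrants are treated analogously via \eqref{cond2}--\eqref{cond4}.

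For step (iii), I would first observe that $\bn_k(\bZ_1)$ and $\bn_\ell(\bZ_1)$ must be linearly independent: otherwise the four sign patterns in \eqref{cond1}--\eqref{cond4} could not be jointly satisfied. Hence $\cA_k\cap\cA_\ell$ is locally a smooth $(2N-2)$-dimensional manifold near $\bZ_1$, and any absolutely continuous curve lying on it must satisfy $\dot\bZ\cdot\bn_k=\dot\bZ\cdot\bn_\ell=0$ almost everywhere. Parametrizing $\bv_k\in\ch F_k(\bZ)$ and $\bv_\ell\in\ch F_\ell(\bZ)$ by convex coefficients $(s_k,s_\ell)\in[0,1]^2$ (the other components being determined), the map
\begin{equation*}
(s_k,s_\ell)\mapsto(\dot\bZ\cdot\bn_k,\dot\bZ\cdot\bn_\ell)
\end{equation*}
is affine and sends the four corners of $[0,1]^2$ to points in the four open quadrants of $\mathbb{R}^2$, by \eqref{cond1}--\eqref{cond4}. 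A two-dimensional intermediate-value argument then yields a unique zero $(s_k^*,s_\ell^*)\in(0,1)^2$, which determines $\dot\bZ$ as a smooth function of $\bZ$ on $\cA_k\cap\cA_\ell$ (smoothness of the coefficients of the affine system follows from Lemma~\ref{lem:janalytic}). Classical Cauchy--Lipschitz theory on the manifold $\cA_k\cap\cA_\ell$ then delivers the desired local uniqueness.

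The main obstacle I anticipate is the corner case $\bZ(s_*)\in\cA_k\cap\cA_\ell$ in step (ii): Lemma~\ref{lem:pass} is stated for a single smooth hypersurface, so one must apply it to either $\cA_k$ or $\cA_\ell$ individually while $\bZ(s_*)$ sits on both. This should work because the lemma's hypothesis is local and separable: \eqref{cond1}--\eqref{cond4} supply strict sign conditions on each $\bn_i$ independently. Care is still needed to verify that the ``opposing'' vector fields used in the analog of \eqref{aaa} are the correct smooth extensions on each side of the chosen hypersurface, but this is exactly what the construction of $\hbf^{(\pm,\pm)}$ in \eqref{f plus minus} provides.
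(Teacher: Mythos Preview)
Your three-step plan matches the paper's proof, and Step~(iii) is essentially Lemma~\ref{lemma unique} in the appendix (the paper shows the $2\times2$ linear system has $\det A>0$ by direct sign-chasing from \eqref{cond1}--\eqref{cond4}; your affine-map-sending-corners-to-four-quadrants argument is an equivalent and correct way to see invertibility).

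Step~(ii), however, contains a genuine gap. The sentence ``Then $\bZ(t_*)$ lies in one of the four open quadrants $V_k^{\epsilon_k}\cap V_\ell^{\epsilon_\ell}$'' is false: the complement of $\cA_k\cap\cA_\ell$ in $B_h(\bZ_1)$ also contains the arms $(\cA_k\setminus\cA_\ell)\cup(\cA_\ell\setminus\cA_k)$, and nothing you have written excludes $\bZ(t_*)$ from landing there. More seriously, even after repairing the opening move (say by assuming only $\bZ(t_3)\in V_k^+$ and letting $\tau_1$ be the last time on $\cA_k$), you cannot conclude that $\bZ$ follows a \emph{single} smooth field on $(\tau_1,t_3]$: the trajectory may slide along $\cA_\ell\cap V_k^+$ for part of this interval, in which case $\dot\bZ$ is the genuine convex combination $\bef^{(+,0)}=\alpha\bef^{(+,+)}+(1-\alpha)\bef^{(+,-)}$ from Theorem~\ref{cs004}, not any one of the four corner fields. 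The paper handles exactly this via a nested case analysis (its Case~2): it first proves the trajectory remains on $\cA_\ell$ throughout $[\tau_1,t_3]$, and then exploits that \emph{both} $\bef^{(+,+)}\cdot\bn_k<0$ and $\bef^{(+,-)}\cdot\bn_k<0$ (from \eqref{cond1} and \eqref{cond2}), so the sliding field also satisfies $\bef^{(+,0)}\cdot\bn_k<0$, and the Lemma~\ref{lem:pass}-type graph computation still forces $\bZ$ into $V_k^-$. Your ``corner case'' paragraph correctly anticipates needing Lemma~\ref{lem:pass} at a double point, but misses this sliding-along-one-arm scenario, which is where the full sign structure of \eqref{cond1}--\eqref{cond4} (four inequalities per normal, not two) is actually used.
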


\begin{proof} \noindent {\bf Step 1.} Since $\bZ(t_1)=\bZ_1$, by continuity we can find $t_2>t_1$ such that 
$\bZ(t)\in B_h(\bZ_1)$ for all $t\in [t_1,t_2]$. We claim that $\bZ(t)$ belongs to $\cA_k\cap \cA_{\ell}$ for all $t\in [t_1,t_2]$.
Indeed, suppose by contradiction that there exists $t_3\in [t_1,t_2]$ such that $\bZ$ leaves $\cA_k$, that is, 
 $\bZ (t_3) \in V_k^+$ (the case of $V_k^-$ is similar, as well as
the case of leaving $\cA_\ell$ and going into $V_\ell^\pm$). 
Define 
\[
 \tau_1:= \sup \{ s\in [t_1,t_3)\, :\, \bZ(s)\notin V_k^+\},
\] 
which is
the last time $\bZ$ was in $\cA_k$ before entering and remaining in $V_k^+$. 

\noindent {\bf Case 1.}  Suppose that $\bZ(\tau_1)\notin \cA_\ell$. Then $\bZ(\tau_1)$ belongs to either
$V_\ell^+$ or $V_\ell^-$. Without loss of generality,
we assume that $\bZ(\tau_1)\in V_\ell^+$. 
Since $\bZ(\tau_1)\in \cA_k$  by definition, and it does not belong to any
other  $\cA_i$, only the $k$-th component of the force is
double-valued at $\bZ(\tau_1)$. Thus, $\bZ(\tau_1)$ is a point
satisfying the hypotheses of Theorem \ref{cs004}
because $\bef^{(+, \pm)}(\bZ(\tau_1))\cdot \bn_k
(\bZ(\tau_1))<0$. 
Therefore there is $\delta >0$ such that
$\bZ(t)\in \cA_k$ for $t\in [\tau_1,\tau_1+\delta]$, which contradicts the
definition of $\tau_1$.

\noindent {\bf Case 2.} By Case 1,  $\bZ(\tau_1)\in \cA_\ell$. 
We claim that
\begin{equation}\label{int1}
\bZ(t)\in\cA_\ell\quad\text{ for all }t\in [\tau_1,t_3].
\end{equation}
If \eqref{int1} fails, then there is $t_4\in (\tau_1,t_3]$
such that  $\bZ(t_4)\notin \cA_\ell$, and so $\bZ(t_4)$ is in
$V_\ell^+\cup V_\ell^-$. Without loss of generality,
assume $\bZ(t_4)\in V_\ell^+$,
and define 
\[
\tau_2:= \sup \{ s\in [\tau_1,t_4]\, :\, \bZ(s)\notin V_\ell^+\}.
\]
which is
the last time $\bZ$ was in $\cA_\ell$. 
If $\tau_2 > \tau_1$, then $\bZ(\tau_2)\in \cA_\ell$
and $\bZ(\tau_2)\notin \cA_k$ because $\bZ(t) \in V_k^+$ on $[\tau_1,t_4]$. 
Hence $\bZ(\tau_2)$ is a point that satisfies the hypotheses of the
fine cross-slip theorem because 
$\bef^{(+,\pm)}(\bZ(\tau_2))\cdot \bn (\bZ(\tau_2))<0$,
and so there is $\delta >0$ such that $\bZ(t) \in \cA_\ell$ for 
$t\in [\tau_2,\tau_2+\delta]$. This contradicts the definition of $\tau_2$. 

Therefore $\tau_2 = \tau_1$,
$\bZ(\tau_2)\in \cA_k\cap\cA_\ell$, 
and $\bZ(t) \in V_k^+\cap V_\ell^+$ for
$t\in(\tau_2,t_1]$.
We deduce that $\bZ$ satisfies $\dot \bZ = \bef^{(+,+)}(\bZ)$
on $(\tau_2,t_3]$, thus
\begin{equation}
  \label{Zpp}
  \bZ(t) = \bZ(\tau_2) + \int_{\tau_2}^t \bef^{(+,+)}(\bZ(s))\, \de s,
\qquad t\in [\tau_2,t_4].
\end{equation}
Applying the argument from the proof of Corollary \ref{cor:f0}, we can
reach a contradiction as follows. Locally $\cA_k$ is given by the
graph of a function, so 
without loss of generality we can write 
$\cA_k \cap B_r(\bZ(\tau_2)) = \{ \bZ = (\bxi,y)\in B_r(\bZ(\tau_2)) \, :\, y =
\Phi(\bxi)\}$ for a function 
$\Phi$ of class $C^2$. 
Denote $\bZ(\tau_2)$ as $( \bxi_0,y_0) = \bZ(\tau_2)$.
Without loss of generality, we can assume that
$\nabla \Phi(\bxi_0)=\bzero$ so $\bn_k(\bZ(\tau_2)) = (\bzero,1)$ and 
\begin{align*}
V_k^+\cap B_r(\bZ(\tau_2)) = \{(\bxi,y)\in B_r(\bZ(\tau_2)) \, :\, y >
\Phi(\bxi)\},\\
V_k^-\cap B_r(\bZ(\tau_2)) = \{(\bxi,y)\in B_r(\bZ(\tau_2)) \, :\, y <
\Phi(\bxi)\}.
\end{align*}
From \eqref{cond1}, which holds in $B_h(\hat \bZ)$,
we have the same condition as \eqref{cond1} at the point
$\bZ(\tau_2)\in B_h(\hat \bZ)$. 
Set $h: = -\bef^{(+,+)}(\bZ(\tau_2))\cdot \bn(\bZ(\tau_2))
>0$, and find a neighborhood $V$ of $\bZ(\tau_2)$ such that 
$ -\bef^{(+,+)}(\bZ)\cdot \bn(\widetilde \bZ)>\frac12 h$
for $\bZ\in V$ and $\widetilde \bZ \in V\cap \cA_k$. From \eqref{Zpp}
we have
\begin{align*}
  \bZ(t) \cdot \bn(\bZ(\tau_2))  &= \bZ(\tau_2) \cdot \bn(\bZ(\tau_2)) 
 + \int_{\tau_2}^t \bef^{(+,+)}(\bZ(s)) \cdot \bn(\bZ(\tau_2)) \, \de s\\
&<\bZ(\tau_2) \cdot \bn(\bZ(\tau_2))
-\frac{t-\tau_2}{2}h.
\end{align*}
Using $\bn(\bZ(\tau_2)) = (\bzero,1)$ 
and writing $\bZ(t) = (\bxi(t),y(t))$,
we obtain
\begin{equation}
  \label{Zpp2}
  y(t) < y_0 -\frac{t-\tau_2}{2}h.
\end{equation}
But $\Phi(\bxi(t)) = \Phi(\bxi(\tau_2)) + \bzero + o(t-\tau_2) = 
 \Phi(\bxi_0)  + o(t-\tau_2) = y_0 + o(t-\tau_2)$. So \eqref{Zpp2} becomes
\[
 y(t) < y_0 -\frac{t-\tau_2}{2}h = \Phi(\bxi(t))
-\frac{t-\tau_2}{2}h +o(t-\tau_2) <\Phi(\bxi(t))
\]
for $0<t-\tau_2 < \delta$ for some $\delta >0$. 
This implies that $\bZ(t) \in V_k^-$ for $t\in (\tau_2,\tau_2+\delta]$,
which contradicts the fact that
$\bZ(t)\in V_k^+$, for $t\in (\tau_2,t_4]$. 

Thus, we have shown that \eqref{int1} holds. Since $\bZ(t_3)\in V_k^+$
by the definition of $\tau_1$,
$\bZ(t)\in V_k^+$ for all $t\in (\tau_1,t_3]$. 
This, together with \eqref{int1} and Theorem \ref{cs004}, implies that 
\[
\dot \bZ(t) = \bef^{(+,0)}(\bZ(t))
  = \alpha (\bZ(t)) \bef^{(+,+)}(\bZ(t)) +
(1-\alpha (\bZ(t)))\bef^{(+,-)}(\bZ(t))
\]
for $t\in  (\tau_1,t_3]$, where 
\begin{equation}\label{3.5*}
\alpha(\bZ(t))=\frac{\bef^{(+,-)}(\bZ(t))\cdot\bn_\ell(\bZ(t))}{\bef^{(+,-)}(\bZ(t))\cdot\bn_\ell(\bZ(t))-\bef^{(+,+)}(\bZ(t))\cdot\bn_\ell(\bZ(t))}.
\end{equation}
Using the same argument 
with $\Phi$ as above (starting from \eqref{Zpp}) and the fact that
$\bef^{(+,0)}(\bZ(t))\cdot \bn_k(\bZ(t))<0$, 
we conclude that $\bZ(t) \in V_k^-$, yielding a contradiction.
 This shows that $t_3$  cannot exist, and, in turn, that
$\bZ(t)\in \cA_k\cap\cA_\ell$ for all $t\in [t_1,t_2]$.

\noindent {\bf Step 2.}  In view of the previous step, we have that $\bZ(t)\in \cA_k\cap\cA_\ell$ for all $t\in [t_1,t_2]$.
In turn, 
$$ \dot \bZ(t)\cdot \bn_k(\bZ(t))=0\quad\text{and}\quad \dot \bZ(t)\cdot \bn_{\ell}(\bZ(t))=0
$$
for $\mathcal L^1$-a.e.\@ $t\in [t_1,t_2]$. Moreover, $\dot \bZ(t)\in {\rm co} F(\bZ(t))$ for $\mathcal L^1$-a.e.\@ $t\in [t_1,t_2]$.
Finally, since $\bZ(t)\in B_h(\bZ_1)$ for all $t\in [t_1,t_2]$, we have that \eqref{cond1}-\eqref{cond4} hold 
with $\bZ(t)$ in place of $\bZ_1$ for all $t\in [t_1,t_2]$ and $\bZ(t) \notin \cA_i$ for $i\neq k,\ell$ and $\bZ(t)\notin \cE_{{\rm zero}}\cup \cS_\ell \cup \cS_k$ for all $t\in [t_1,t_2]$. 
Hence, we can apply  Lemma \ref{lemma unique} in the appendix with $\bZ(t)$ in place of $\bZ_1$ to conclude that $\dot \bZ(t)$ is uniquely determined for $\mathcal L^1$-a.e. $t\in [t_1,t_2]$.
This concludes the proof.
\end{proof}

\begin{remark}
The argument in Step 1 does not rely on the fact that only two surfaces are intersecting.
Any number of surfaces would be treated the same way, but with more subcases for
showing the motion does not leave the intersection. However,
establishing uniqueness would require a different argument from the one in
Lemma \ref{lemma unique}.
\end{remark}



\subsection{Identification of $\cA_\ell$ with a curve in $\Omega$}

Each dislocation point $\bz_\ell$ moves in $\Omega\subset \R{2}$ according to 
$\dot \bz_\ell = (\bj_\ell (\bZ) \cdot \bg_\ell (\bZ) ) \bg_\ell(\bZ)$,
but the dynamics is
understood in the larger space $\Omega^N \subset \R{2N}$. If
$\bz_\ell$ is exhibiting fine cross-slip,  then $\bz_\ell$ 
moves along a curve that is not a straight line parallel to a glide
direction.
In this section, we describe the fine
cross-slip motion of $\bz_\ell$ in $\Omega$ in terms of the dynamics
of the system in $\Omega^N$. That is, we will examine
fine cross-slip for $\bz_\ell$, which
 occurs when the
solution curve $\bZ(t) \in \Omega^{N}$ lies inside the set
 $\cA_\ell$, via a projection
 into $\Omega$.

The projection $\bz_\ell(t)$ of $\bZ(t)$ onto its $\ell$-th components is the fine cross-slip curve in
$\Omega$, with 
$\bz_\ell(t) = (z_{\ell,1}(t),\,z_{\ell,2}(t))$ for $t\in [t_0,t_1]$.

Recall that $\cA_\ell$ is locally given by the zero-level set of the
function $\bj_\ell \cdot \bg_0$. Specifically, if $\bZ_0
=(\bz_{0,1},\ldots,\bz_{0,N})\in \cA_\ell$,
then there exists $r>0$ such that 
\begin{equation}
  \label{Aell}
  \cA_\ell \cap B_r(\bZ_0) = \{\bZ\in \Omega^{N}\, :\,  \bj_\ell (\bZ)
  \cdot \bg_\ell^0(\bZ) = 0\},
\end{equation}
where $\bg_\ell^0(\bZ) = \bg_\ell^+ - \bg_\ell^-$ is constant in
$B_r(\bZ_0)$. Additionally, the normal to $\cA_\ell$ is given 
(up to a sign) by
\begin{equation}
  \label{normal}
  \bn : = \frac{\nabla \left(\bj_\ell (\bZ)
  \cdot \bg_\ell^0(\bZ)\right)}{\left|\nabla \left(\bj_\ell (\bZ)
  \cdot \bg_\ell^0(\bZ)\right)\right|}
  \in \R{2N},
\end{equation}
which is assumed to be non-zero in $\cA_\ell\cap B_r(\bZ_0)$. 
We write
$\bn  = (\bn _1,\ldots,\bn _N)$, with $\bn _i\in\R2$, for $i=1,\ldots,N$.

Assuming that
no other dislocations exhibit fine cross-slip, the fine cross-slip
conditions at
 $\bZ_0\in \cA_\ell$ are (with the appropriate sign for $\bn$)
\begin{align*}
&  \bn  \cdot \Big(
(\bj_1(\bZ_0)\cdot \bg_1)\bg_1,\ldots,
(\bj_\ell(\bZ_0)\cdot \bg^+_\ell)\bg^+_\ell,\ldots,
(\bj_N(\bZ_0)\cdot \bg_N)\bg_N\Big) <0,\\
&  \bn  \cdot \Big(
(\bj_1(\bZ_0)\cdot \bg_1)\bg_1,\ldots,
(\bj_\ell(\bZ_0)\cdot \bg^-_\ell)\bg^-_\ell,\ldots,
(\bj_N(\bZ_0)\cdot \bg_N)\bg_N\Big) >0.
\end{align*}
Note that we dropped the explicit dependence of each $\bg_i$ on $\bZ$
because they are constant in $B_r(\bZ_0)$.
Thus,
since $(\bj_\ell(\bZ_0)\cdot \bg^+_\ell)=(\bj_\ell(\bZ_0)\cdot \bg^-_\ell)$,
\begin{equation*}
   0 > \bn  \cdot \Big(
{\bf 0},\ldots,
(\bj_\ell(\bZ_0)\cdot \bg^\pm_\ell)\bg^0_\ell,\ldots,
{\bf 0}\Big) =
(\bj_\ell(\bZ_0)\cdot \bg^\pm_\ell)\bn _\ell \cdot \bg^0_\ell.
\end{equation*}
This implies $\bn _\ell \neq {\bf 0}\in\R2$, i.e., 
by \eqref{normal}, we have
\begin{equation}
  \label{grad}
\frac{\partial}{\partial z_{\ell,1}} \left( \bj_\ell
  (\bz_1,\ldots,\bz_N) \cdot \bg_\ell^0  (\bz_1,\ldots,\bz_N) \right) \neq 0.
\end{equation}
Let us write $\check \bZ$ for points in $\R{2N-1}$ 
of the form
$\check \bZ
:=(\bz_1,\ldots,\bz_{\ell-1},z_{\ell,2},\bz_{\ell+1},\ldots,\bz_N)$,
where the $z_{\ell,1}$ component is omitted. 

From \eqref{Aell} and \eqref{grad}, the Implicit Function Theorem yields
$r_1>0$, $r_2\in (0,r)$,
and a function
$\varphi :B_{r_1}(\check \bZ_0)\subset\R{2N-1} \to \R{}$, where  
$\check \bZ_0 :=(\bz_{0,1},\ldots, z_{0,\ell,2}, \ldots,\bz_{0,N})$, such that
$\varphi(\check \bZ_0) = z_{0,\ell,1}$ and
\begin{equation*}
  \cA_\ell \cap B_{r_2}(\bZ_0) = \{\bZ\in\Omega^{N}\, :\,
z_{\ell,1} = \varphi (\check \bZ)\}. 
\end{equation*}
That is, locally, $\cA_\ell$ is the graph of $\varphi$. 
If $\bZ(t)$ is a solution curve lying in $\cA_\ell\cap B_{r_2}(\bZ_0)$
for $t\in[t_0,t_1]$ with $\bZ(t_0)=\bZ_0$, then
\begin{equation*}
  \bZ(t) = (\bz_1(t),\ldots,\varphi(\check \bZ(t)),z_{\ell,2}(t),\ldots,\bz_N(t))\in\cA_\ell
\end{equation*}
for $t\in[t_0,t_1]$. In particular, the projection of $\bZ(t)$ onto its $\ell$-th
components gives the fine cross-slip curve
\begin{equation}
  \label{curve}
\bz_\ell(t) = (z_{\ell,1}(t),z_{\ell,2}(t)) =
(\varphi(\check \bZ(t)),z_{\ell,2}(t))  ,\quad
t\in[t_0,t_1].
\end{equation}

Note that $\bn _\ell (\bZ(t))$ is not directly related to the
fine cross-slip curve given by \eqref{curve} because 
$\bn _\ell (\bZ(t))$ is not orthogonal to $\dot \bz_\ell(t)$,
in general. We have
\[
0=\bn  (\bZ(t)) \cdot \dot\bZ(t) = \sum_{i=1}^N \bn _i
(\bZ(t))\cdot \dot\bz_i(t),
\]
 so
\begin{equation*}
  \label{not orthogonal}
  \bn _\ell (\bZ(t))\cdot  \dot\bz_\ell(t) = 
-\sum_{i\neq \ell} \bn _i
(\bZ(t))\cdot\dot \bz_i(t),
\end{equation*}
and the sum on the right-hand side need not be zero.




\subsection{Numerical Simulations}\label{numerics}
The simulation of \eqref{f010} may be undertaken using standard numerical ODE
integrators, provided sufficient care is taken in resolving the 
evolution near the ``ambiguity surfaces'' $\cA_\ell$.
A discrete time step leads
to a numerical integration that oscillates back and forth across an
attracting ambiguity surface in case of fine cross-slip. 
On the macro-scale, this appears as fine cross-slip since the
small oscillations across the surface average out and what remains is
motion approximately tangent to $\cA_\ell$. To compute the
vector field, one must solve the Neumann problem \eqref{u0Neumann} at
each time step, so a fast elliptic PDE solver is needed in practice. 

An example is
shown in Figures \ref{fig:twelve1} and \ref{fig:detail}, where we have simulated
a system of
$N=12$ 
screw dislocations 
with each Burgers modulus $b_i=1$ for $i=1,\ldots,12$, 
and where the domain is
the unit disk. 
The integration is done in $\Omega^{12}\subset\R{24}$, 
but the graphics depict the path
each $\bz_i$ takes in $\Omega\subset \R{2}$.
All but one dislocation exhibit normal
glide motions, while the dislocation at the center exhibits fine
cross-slip, as is visible in Figure \ref{fig:detail}.
 In this case, the solution to the Neumann problem is
explicit (cf. \eqref{pkdisk}), so it is not difficult to simulate
systems with more dislocations and observe more complicate behavior, such as
multiple dislocations simultaneously exhibiting fine cross-slip,
corresponding to motion along the intersection of multiple ambiguity
surfaces in the full space $\Omega^N$. 
The simulation depicted in Figures \ref{fig:twelve1} and
\ref{fig:detail} was run until a dislocation collided with the
boundary. Since all dislocations have positive Burgers moduli, they
repel each other, and no collision between dislocations occurs, and the
dynamics can be continued until a boundary collision. 

\begin{figure}[ht]
\begin{center}
\centering{
\labellist
\hair 2pt
\pinlabel $\mathbf{z}_1$ at 115 122
\pinlabel $\searrow$ at 117 113
\pinlabel $\mathbf{z}_2$ at 65 132
\pinlabel $\mathbf{z}_3$ at 85 112
\pinlabel $\mathbf{z}_4$ at 65 55
\pinlabel $\mathbf{z}_5$ at 95 56
\pinlabel $\mathbf{z}_6$ at 130 53
\pinlabel $\mathbf{z}_7$ at 150 30
\pinlabel $\mathbf{z}_8$ at 200 84
\pinlabel $\mathbf{z}_9$ at 168 108
\pinlabel $\mathbf{z}_{10}$ at 165 125
\pinlabel $\mathbf{z}_{11}$ at 185 157
\pinlabel $\mathbf{z}_{12}$ at 137 160
\pinlabel $\mathcal{G}$ at 237 40
	\endlabellist				
\includegraphics[scale=.7]{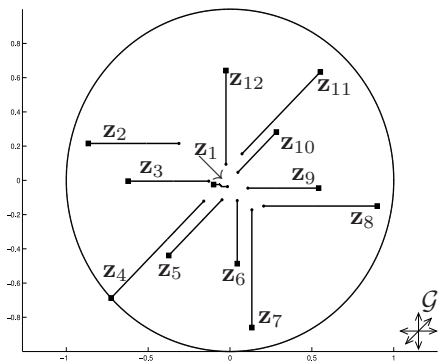}
}
\caption{The forces are repulsive and the dislocations move mostly along the glide directions $\cG=\{\pm\be_1,\pm\be_2,\pm\tfrac1{\sqrt2}(\be_1+\be_2)\}$. All but one (the one at the 	center) move along a glide direction until one of them hits the boundary. The dislocation in the middle moves along $-\be_1$ but then exhibits fine cross-slip.}
\label{fig:twelve1}
\end{center}
\end{figure}

\begin{figure}[ht]
\begin{center}
\centering{
\labellist
\hair 2pt
\pinlabel $\mathbf{z}_1(0)$ at 165 80
\pinlabel $\mathbf{z}_1(T)$ at 50 95
\pinlabel $\mathcal{G}$ at 215 40
	\endlabellist				
\includegraphics[scale=.7]{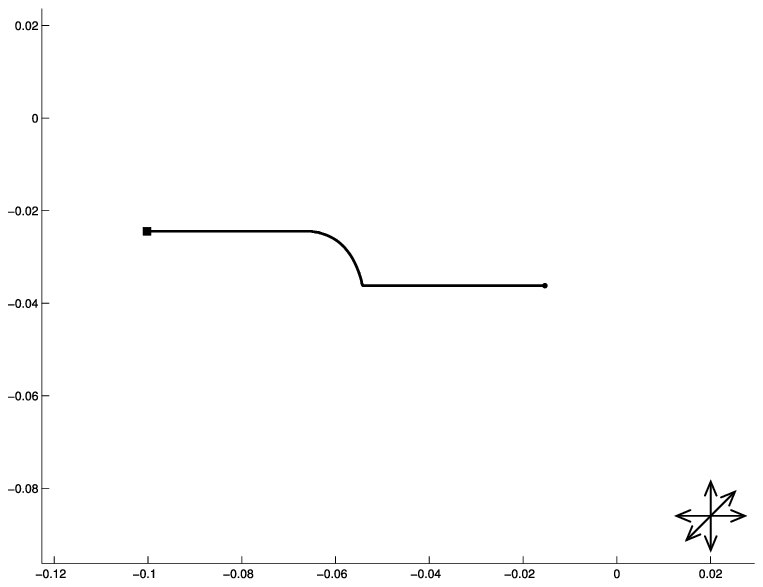}\quad
\labellist
\pinlabel $\mathbf{z}_1(0)$ at 125 77
\pinlabel $\mathbf{z}_1(T)$ at 70 85
\pinlabel $\mathbf{z}_3(0)$ at 48 120
\pinlabel $\mathbf{z}_4(0)$ at 28 40
\pinlabel $\mathbf{z}_5(0)$ at 101 47
\pinlabel $\mathbf{z}_6(0)$ at 161 43
\pinlabel $\mathbf{z}_9(0)$ at 195 90
\pinlabel $\mathbf{z}_{10}(0)$ at 164 133
\pinlabel $\mathcal{G}$ at 205 40
\endlabellist
\includegraphics[scale=.7]{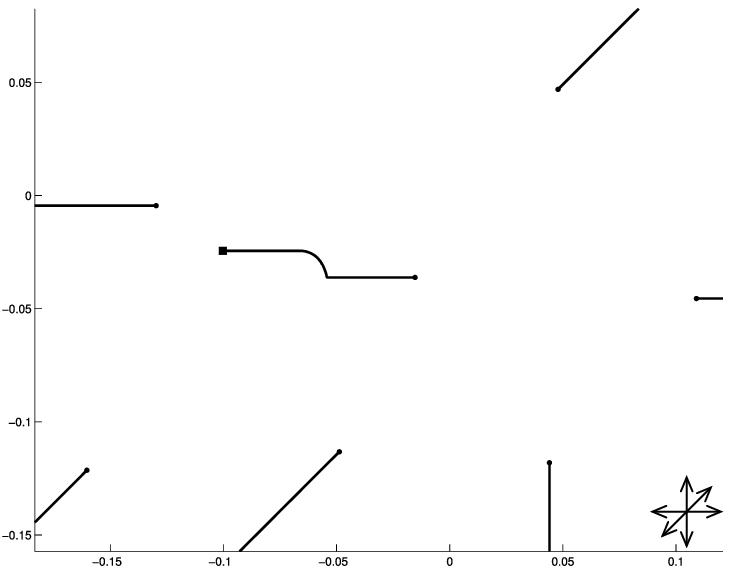}\\ 
}
\caption{These plots are magnified views of the motion of $\bz_1$. The motion begins at the dot on the right and ends at the square on the left. The motion abruptly begins to fine cross-slip 	and eventually moves back to a gliding motion as the fine cross-slip motion becomes aligned with $-\be_1$.}
\label{fig:detail}
\end{center}
\end{figure}

\section{Special Cases }\label{sec:special}
In this section we consider some special domains $\Omega$ for which
the Peach-K\"ohler force can be explicitly determined
(i.e. the solution to the Neumann problem \eqref{u0Neumann} is known), specifically
the unit disk $B_1$, the half-plane $\R{2}_+ $, and the plane $\R2$.
The last two cases do not technically fit in our previous
discussion, because $\Omega$ is unbounded. However, 
the Neumann problem is well-defined for these settings 
and we are able to discuss the dislocation dynamics.

In what follows we will use the fact that the 
boundary-response strains generated from each dislocation
are ``decoupled'' in the following sense.
Define $u_0^i$ as
\begin{equation*}
u_0^i (\bx;\bz_i) := \int_{\partial \Omega}G(\bx,\by)
\bL \bk_i (\by;\bz_i)\cdot \bn(\by)\, \de s(\by),
\end{equation*}
where $G$ is the Green's function for the Neumann problem.
Then $u_0^i(\cdot;\bz_i)$ solves \eqref{u0Neumann} with only one dislocation, i.e.,
\begin{equation*}
\left\{  \begin{array}{ll}
{\rm div}_{\bx}\left( {\bf L}\nabla_{\bx} u_0^i (\bx;\bz_i)\right) = 0,&  \bx\in\Omega, \\
{\bf L}\left(\nabla_{\bx} u_0^i (\bx;\bz_i)+ {\bf k}_i (\bx;\bz_i) \right)\cdot \bn(\bx)\ = 0,&
\bx\in \partial \Omega .
\end{array}\right.
\end{equation*}
Thus the boundary-response strain at $\bx$ due to a dislocation 
at $\bz_i$ with Burgers modulus $b_i$ is given by 
$\nabla_\bx u_0^i(\bx;\bz_i)$, 
and the total boundary-response strain at 
$\bx$ due to the system $\cZ$ is 
$\nabla_\bx u_0 (\bx;\bz_1,\ldots,\bz_N) = 
\sum_{i=1}^N \nabla_\bx u_0^i(\bx;\bz_i)$.

If we consider two dislocations 
$\bz_1$ and $\bz_2$ with Burgers moduli $b_1$ and $b_2$, respectively, that collide in $\Omega$,
then by \eqref{k_def} the boundary data in \eqref{u0Neumann} satisfies
\[
\bL(\bk_1(\bx;\bz_1)+\bk_2(\bx;\bz_2))\cdot\bn(\bx)\to \bL\Big(\bk_1(\bx;\bz_1)+
\frac{b_2}{b_1}\bk_1(\bx;\bz_1)\Big)\cdot\bn(\bx),\quad\text{as $\bz_2\to\bz_1$}.
\]
Notice that $\bk_1(\cdot;\bz_1)+(b_2/b_1)\bk_1(\cdot;\bz_1)$ is the 
singular strain generated by a single dislocation located at $\bz_1$ 
with Burgers modulus $b_1+b_2$.
The same argument applies to an arbitrary number $N$ 
of dislocation by linearity of \eqref{u0Neumann}.
Thus, unlike the singular strain which becomes infinite if any two 
dislocations collide in $\Omega$ (see \eqref{k_def}), 
the boundary-response strain is oblivious 
to collisions between dislocations. 
Although the boundary-response
strain is well-defined  when dislocations collide with each other,
it is not well-defined if
a dislocation collides with $\partial \Omega$.

\subsection{The Unit Disk}\label{sec:undisk}
Consider the case $\Omega = B_1 = \{\bx\in\R2\, : \, |\bx|<1\}$ and $\lambda=\mu =1$, so that $\bL=\bI$.
For  $\bz \in B_1$ we define $\bbz\in B_1^c$ to be the 
reflection of $\bz$ across the unit circle $\partial B_1$,
\begin{equation*}
  \bbz: = \begin{cases}
  \displaystyle\frac{\bz}{|\bz|^2} & \text{if $\bz\in B_1\setminus\{\bzero\}$,} \\
  \infty & \text{if $\bz=0$.}
  \end{cases}
\end{equation*}

For fixed $\bz_i \in B_1$,
 it can be seen  that the function
\begin{equation}
  \label{ui0disk}
   u^i_0(\bx;\bz_i): =\begin{cases}
   \displaystyle -\frac{b_i}{\pi}
\arctan \left(\frac{x_2-\bzz_{i,2}}{x_1-\bzz_{i,1} + |\bx-\bbz_i|\,}\right) & \text{if $\bz\neq0$,} \\
0 & \text{if $\bz=0$}
\end{cases}
\end{equation}
satisfies
\begin{equation*}
  \left\{
    \begin{array}{ll}
      \Delta_\bx u_0^i(\bx;\bz_i) = 0,& \bx\in B_1, \\
      \nabla_\bx u_0^i (\bx;\bz_i)\cdot \bn(\bx) = -\bk_i(\bx;\bz_i)\cdot \bn(\bx),
      & \bx\in \partial B_1,
    \end{array}
\right.
\end{equation*}
and
\begin{equation}
  \label{gradu0disk}
  \nabla_\bx u_0^i (\bx;\bz_i) = -\bk_i(\bx;\bbz_i)\quad\text{for all $\bx\in B_1$}.
\end{equation}
Note that $\nabla u_0^i$
is singular only at the point $\bx = \bbz_i\notin B_1$. 

As discussed at the beginning of Section \ref{sec:special}, for a system of dislocations
given by $\cZ$ and $\cB$, the solution to 
the Neumann problem \eqref{u0Neumann}
is given by
\begin{equation*}
  u_0(\bx;\bz_1,\ldots,\bz_N) = \sum_{i=1}^Nu_0^i(\bx;\bz_i)
\end{equation*}
with $u_0^i$ as in \eqref{ui0disk}. Thus, combining \eqref{PKj1} and
\eqref{gradu0disk}, we have 
\begin{equation}
  \label{pkdisk}
  \bj_\ell(\bz_1,\ldots,\bz_N) =b_\ell\bJ \Bigg(
\sum_{i\neq\ell}\bk_i(\bz_\ell;\bz_i) - \sum_{i=1}^N\bk_i(\bz_\ell; \bbz_i) \Bigg).
\end{equation}
 Formula \eqref{pkdisk} greatly simplifies numerical simulations of the
dislocation dynamics. Without
an explicit formula, one must solve the Neumann problem at each timestep.

From \eqref{pkdisk}, we can see that the boundary of $B_1$ attracts dislocations.
If $N=1$ and $\bz_1 \in B_1\setminus \{ \bzero\}$, then 
\begin{equation*}
  \bj_1(\bz_1) = -b_1\bJ\bk_1(\bz_1;\bbz_1) = 
-\frac{b_1^2}{2\pi}\frac{\bz_1-\bbz_1}{|\bz_1-\bbz_1|^2}=
\frac{b_1^2}{2\pi}\frac{\bz_1}{(1-|\bz_1|^{2})}
\end{equation*}
since $\bz-\bbz = \bz(1-|\bz|^{-2})$. Thus, the force is directed 
radially outward (toward
the nearest boundary point to $\bz_1$) and diverges as $\bz_1 \to \partial B_1$.
If $\bz_1={\bf 0}$ then $\bj_1=\bzero$ and $\bz_1$ will not move. Otherwise, a single
dislocation in $B_1$ will be pulled to $\partial B_1$, and will collide
with $\partial B_1$ in a finite time (assuming the glide directions
span $\R2$).
If $N>1$, then the other dislocations produce boundary forces that will
pull on $\bz_\ell$ in the directions $-b_\ell b_i(\bz_\ell - \bbz_i)$ for each $i$.


The sets $\cA_\ell$ as given in \eqref{Amb_def} are smooth, because
they are locally given by ${\bf j}_\ell \cdot \bg_0=0$ for a fixed
vector $\bg_0$ (cf. equation \eqref{e0}), and by
\eqref{pkdisk}, ${\bf j}_\ell \cdot \bg_0$ is a rational function
with singularities only at collision points.

\subsection{The Half-Plane}\label{sec:hpp}
Although the theory developed in this paper only applies to bounded domains,
the equation for the Peach-K\"ohler force \eqref{PKj} is still well-defined, 
provided there is a weak solution to the Neumann problem \eqref{u0Neumann}.
For the special cases of the half-plane and 
the plane we present an explicit expression for the Peach-K\"ohler force 
without resorting to the renormalized energy.

Let $\Omega = \R2_+ := \{\bx\in \R2\, :\, x_2>0\}$ and let $\lambda=\mu =1$. 
The solution to \eqref{u0Neumann} is given in terms of the inverse tangent,
 using a reflected point across $\partial \R2_+ = \{x_2 = 0\}$. 
For all $\bz=(z_1,z_2)\in\R{2}$ define 
 $\tilde\bz: =(z_1,-z_2)$. 
Then
for $\bz_i \in \R2_+$,
\begin{equation}
  \label{ui0H}
   u^i_0(\bx;\bz_i): = -\frac{b_i}{\pi}
\arctan \left(\frac{x_2-\tilde z_{i,2}}{x_1-\tilde z_{i,1} + |\bx-\tilde\bz_i|\,}\right)
\end{equation}
satisfies
\begin{equation*}
  \left\{
    \begin{array}{ll}
      \Delta_\bx u_0^i(\bx;\bz_i) = 0,& \bx\in \R2_+,\\
      \nabla_\bx u_0^i (\bx;\bz_i)\cdot \bn(\bx) = -\bk_i(\bx;\bz_i)\cdot \bn(\bx),
      & \bx\in \partial \R2_+,
    \end{array}
\right.
\end{equation*}
and
\begin{equation*}
  \nabla_\bx u_0^i (\bx;\bz_i) = -\bk_i(\bx;\tilde\bz_i)\quad\text{for all $\bx\in \R2_+$}.
\end{equation*}
Again, we have 
$u_0(\bx;\bz_1,\ldots,\bz_N) = \sum_{i=1}^Nu_0^i(\bx;\bz_i)$
with $u_0^i$ as in \eqref{ui0H}, and the
Peach-K\"ohler force is 
\begin{equation}
  \label{pkH}
  \bj_\ell(\bz_1,\ldots,\bz_N) =b_\ell \bJ \Bigg(
\sum_{i\neq\ell}\bk_i(\bz_\ell;\bz_i) - \sum_{i=1}^N\bk_i(\bz_\ell; \tilde\bz_i) \Bigg) .
\end{equation}
From \eqref{pkH} it is again not difficult to see that a single dislocation $\bz_1$
in $\R2_+$ with Burgers modulus $b_1$ 
is attracted to $\partial \R2_+$.
As in the case of the disk,
the ambiguity set $\cA$ is smooth except at the intersections of the
$\cA_\ell$. 


\subsection{The Plane}\label{sec:fullplane}
The case $\Omega = \R2$ and $\lambda=\mu =1$ is the simplest case. There  is no boundary so
$u_0 \equiv 0$ and, by \eqref{PKj}, the Peach-K\"ohler force is
then 
\begin{equation}
  \label{jplane}
  \bj_\ell (\bz_1,\ldots,\bz_N)
=b_\ell {\bf J}\sum_{i\neq\ell}{\bf k}_i({\bf z}_\ell;{\bf z}_i).
\end{equation}
Even though the renormalized energy has not been defined for 
unbounded domains, in the case of the plane we can 
formally write $\bj_\ell = -\nabla_{\bz_\ell}U$, where, 
up to an additive constant,
\begin{equation*}
  U(\bz_1,\ldots,\bz_N) = 
- \sum_{i=1}^{N-1}  \sum_{j=i+1}^N \frac{ b_ib_j}{2\pi}\log |\bLam (\bz_i-\bz_j)|,
\end{equation*}
with $\bLam$ defined in \eqref{matrices}.  

In general, it can be difficult to exhibit an example that shows analytically
fine cross-slip (though it is regularly observed
in numerical simulations). However, in the case 
$\Omega = \R{2}$, this can be done with two dislocations as follows.
Suppose we have a system of two dislocations $\bZ = (\bz,\bw)\in\R{4}$ with Burgers moduli $b_1 = -b_2 =: b>0$, respectively.
Under these assumptions, \eqref{jplane} reduces to
\begin{equation}
  \label{j1j2}
\bj_1(\bz,\bw) = -\frac{b^2}{2\pi}
\frac{\bz-\bw}{|\bz-\bw|^2} = -\bj_2(\bz,\bw).   
\end{equation}
 Assume that the glide directions are along the lines $x_2=\pm x_1$,
\begin{equation}
  \label{glide_directions}
  \mathcal{G} = \left\{ 
\pm \bg_1,\; \pm \bg_2
\right\},\quad
\bg_1 := \frac{1}{\sqrt{2}}\left(
    \begin{array}{c}      1 \\ 1   \end{array}\right) ,\;
\bg_2 := \frac{1}{\sqrt{2}}\left(
    \begin{array}{r}      1 \\ -1   \end{array}\right) .
\end{equation}
There are two cases of initial conditions $\bZ_0 = (\bz_0,\bw_0)$ 
with $\bz_0 = (z_{0,1},z_{0,2})$, $\bw_0=(w_{0,1},w_{0,2})$
 to consider: either $\bz_0$ and $\bw_0$ are aligned
along a vertical or horizontal line, or they are not. That is, either
$z_{0,1} = w_{0,1}$ or $z_{0,2}=w_{0,2}$ (but not both), or $z_{0,i} \neq w_{0,i}$ for
$i=1,2$. 

We begin by considering the case $z_{0,2}=w_{0,2}$. 
Let $y:=z_{0,2}=w_{0,2}$, and without loss of
generality take $w_{0,1}>z_{0,1}$.
From \eqref{j1j2} we have 
\begin{equation}
  \label{j1j2_y}
  \bj_1(\bZ_0)=
\bj_1(z_{0,1},y,w_{0,1},y) = \frac{b^2}{2\pi}\frac{1}{w_{0,1}-z_{0,1}}\left(
    \begin{array}{c}      1 \\ 0    \end{array}\right) 
=-\bj_2(\bZ_0).
\end{equation}
Since $w_{0,1}-z_{0,1} >0$, we see that $\bj_1(\bZ_0)$ 
is aligned with $(1,0)$ and $\bj_2(\bZ_0)$ 
is aligned with $(-1,0)$. Thus, the 
maximally dissipative glide directions for $\bz$ are 
$\bg_1$ and $\bg_2$ (see \eqref{glide_directions}) and the maximally
dissipative glide directions for $\bw$ are $-\bg_1$ and $-\bg_2$. 
Define $\bg_1^0: = \bg_1-\bg_2 = (0,\sqrt{2})$ and $\bg_2^0: =
-\bg_1 +\bg_2 = -\bg_1^0$, so that locally, near $\bZ_0$, the ambiguity
surfaces are 
$\cA_1\cap B_r(\bZ_0) = \{ \bZ \, :\, \bj_1(\bZ)\cdot\bg_1^0 = 0\}$,
 $\cA_2\cap B_r(\bZ_0) = \{ \bZ \, :\, \bj_2(\bZ)\cdot\bg_2^0 = 0\}$
for some small $r>0$. 
From \eqref{j1j2} we see that
$\bj_1(\bZ)\cdot\bg_1^0 = 0$ if and only if $z_2 = w_2$, and the same holds
for $\bj_2(\bZ)\cdot\bg_2^0 = 0$, so that
\begin{equation*}
  \cA_1\cap B_r(\bZ_0) = \cA_2\cap B_r(\bZ_0) =
\{ \bZ = (\bz,\bw) \in B_r(\bZ_0)\, :\, z_2 = w_2\}.
\end{equation*}
This is a degenerate situation, since the ambiguity surfaces $\cA_1$ and $\cA_2$ coincide locally,
and instead of having four vector fields near the intersection, we
have two vector fields. That is, the fields $\bef^{(+,+)}$
and $\bef^{(-,-)}$ (see \eqref{f plus minus}) are defined on either side of the surface $\cA_1$,
but since $\cA_1=\cA_2$, there are no regions where the
fields $\bef^{(-,+)}$ or $\bef^{(+,-)}$ are defined. 
We choose a sign for the normal  to
$\cA_1$ and $\cA_2$ at $\bZ_0$ and
set  
\begin{equation}
  \label{normal_plane}
\bn:= \frac{1}{\sqrt{2}}(0,1,0,-1)  .
\end{equation}

Recall the convention that $\cA_1$ (and $\cA_2$) divides $B_r(\bZ_0)$
into two regions, $V^\pm$ and $\bn$ points from $V^-$ to $V^+$.
 A point in $V^+$ is of the form
$\bZ_0 + \ep \bn = (z_{0,1},y+\ep/\sqrt2,w_{0,1}, y-\ep/\sqrt2)$, and from \eqref{j1j2}
\begin{equation*}
  \bj_1(\bZ_0+\ep \bn) = \frac{b^2}{2\pi}\frac{1}{(z_{0,1}-w_{0,1})^2+2\ep^2}\left(
    \begin{array}{c}      w_{0,1}-z_{0,1} \\ -\sqrt2\ep    \end{array}\right) 
= -  \bj_2(\bZ_0+\ep \bn),
\end{equation*}
so $\bg_2$ is the maximally dissipative glide direction for $\bz$, and 
$-\bg_2$ is the maximally dissipative glide direction for $\bw$ if
$\bZ \in V^+$. Similarly, a  point in $V^-$ is of the form
$\bZ_0 -\ep \bn = (z_{0,1},y-\ep/\sqrt2,w_{0,1}, y+\ep/\sqrt2)$, and the maximally
dissipative glide directions for $\bz$ and $\bw$ in this case are
$\bg_1$ and $-\bg_1$, respectively. 
Thus, we have for $\bZ \in B_r(\bZ_0)$,
\begin{align*}
  &\bef^{(+,+)}(\bZ) := ( (\bj_1(\bZ)\cdot \bg_2)\bg_2\, , \,
 (\bj_2(\bZ)\cdot (-\bg_2))(-\bg_2) ), \\ 
  &\bef^{(-,-)}(\bZ) := ( (\bj_1(\bZ)\cdot \bg_1)\bg_1 \, , \,
 (\bj_2(\bZ)\cdot (-\bg_1))(-\bg_1) ).   
\end{align*}
Since $\bj_1(\bZ) = -\bj_2(\bZ)$ we have
\begin{align}
 \bef^{(+,+)}(\bZ) := (\bj_1(\bZ)\cdot \bg_2) ( \bg_2\, , \,
  -\bg_2 )  ,\quad
  \bef^{(-,-)}(\bZ) := (\bj_1(\bZ)\cdot \bg_1) ( \bg_1\, , \,
  -\bg_1 ).   \label{fppfmm}
\end{align}
From \eqref{glide_directions} and \eqref{j1j2_y} we have
$\bj_1(\bZ_0)\cdot \bg_1= 
\bj_1(\bZ_0)\cdot \bg_2 = \frac{b^2}{2\sqrt2 \pi}(w_{0,1}-z_{0,1})^{-1}>0$,
and from
\eqref{glide_directions} and \eqref{normal_plane}  we have
$\bn \cdot (\bg_2, -\bg_2) = -1$ and
$  \bn \cdot (\bg_1, -\bg_1) = 1$. 
Thus,
\begin{equation*}
\bn\cdot \bef^{(+,+)} (\bZ_0)=  -\frac{b^2}{2\sqrt2 \pi (w_{0,1}-z_{0,1})} <0, \quad 
\bn\cdot \bef^{(-,-)} (\bZ_0)=  \frac{b^2}{2\sqrt2 \pi (w_{0,1}-z_{0,1})} >0,
\end{equation*}
so the fine cross-slip conditions \eqref{conditions} are satisfied (there are no
conditions for $\bef^{(+,-)}$ or $\bef^{(-,+)}$ since locally 
$\cA_1 = \cA_2$). 
By \eqref{3.5*}, $\dot \bZ$ must be a convex combination of 
$\bef^{(+,+)}$ and $\bef^{(-,-)}$,
$\dot \bZ = \alpha \bef^{(+,+)} (\bZ)+ (1-\alpha) \bef^{(-,-)}(\bZ)$,
and the trajectory $\bZ(t) \in \cA_1 = \cA_2$ for some time interval
$[0,T]$. 
Therefore, $\bZ(t) = (\bz(t),\bw(t)) = (z_1(t), z_2(t),
w_1(t), w_2(t))$ and $z_2(t) = w_2(t)$ for $t\in [0,T]$. 
From \eqref{fppfmm} and the fact that
$ \bj_1(\bZ)\cdot \bg_1 = \bj_1(\bZ)\cdot \bg_2$ 
whenever $z_2 = w_2$, we have
\begin{align*}
  \dot \bZ &
=\alpha (\bj_1(\bZ)\cdot \bg_2) ( \bg_2\, , \,  -\bg_2 )  
 + (1-\alpha) (\bj_1(\bZ)\cdot \bg_1) ( \bg_1\, , \,  -\bg_1 )  \\
 &=\frac{b^2}{4 \pi (w_1-z_1)} \left(
1,1-2\alpha,-1,2\alpha-1\right).
\end{align*}
The condition $\bn\cdot \dot \bZ = 0$ yields $\alpha = \frac12$, so
the equations of motion \eqref{3.5*} are
\begin{align*}
  \dot \bZ = (\dot z_1,\, \dot z_2,\, \dot w_1,\, \dot w_2)
=\frac12 \left(\bef^{(+,+)}(\bZ) +\bef^{(-,-)}(\bZ)\right)
 = \frac{b^2}{4 \pi (w_1-z_1)} (1,0,-1,0).
\end{align*}
In particular, $\dot z_2 = 0$, $\dot w_2=0$, and
$z_2(0) = y = w_2(0)$,
so $z_2(t) = y = w_2(t)$ for
$t\in[0,T]$. The equations for $z_1$ and $w_1$ are easily
solved with
\begin{align*}
  z_1(t) &= -\frac12 \left( \left (w_{0,1}-z_{0,1} \right)^2-
\frac{b^2}{\pi} t \right)^{\frac12}+\frac12 (z_{0,1}+w_{0,1})\\
  w_1(t) &= \frac12 \left( \left (w_{0,1}-z_{0,1} \right)^2-
\frac{b^2}{\pi} t \right)^{\frac12}+\frac12 (z_{0,1}+w_{0,1}).
\end{align*}
This implies that the trajectory $\bZ(t)$ moves on $\cA_1 = \cA_2$ up to
the maximal time $T = \frac{\pi}{b^2}(w_{0,1}-z_{0,1})^2$, and $z_1(t)$
increases from $z_{0,1}$ while $w_1(t)$ decreases from $w_{0,1}$, with the two
meeting at $z_1(T) = w_1(T) = \frac12(z_{0,1}+w_{0,1})$. At this collision,
the dynamics are no longer well-defined.

If the initial condition has $\bz_0$ and $\bw_0$ vertically aligned, then
the same analysis applies, but the situation is rotated. 

If $\bz_0$ and $\bw_0$ are not aligned vertically or horizontally, then a regular glide
motion occurs until either $z_1 = w_1$ or $z_2 = w_2$, and then the
above analysis applies. To see this, consider $\bz_0 = (z_{0,1},z_{0,2})$ and
$\bw_0 = (w_{0,1},w_{0,2})$, and without loss of generality, assume that
 $w_{0,1}>z_{0,1}$ and
$w_{0,2}>z_{0,2}$ (the other cases are similar). 
In this case
\begin{equation*}
  \bj_1(\bZ_0)= 
\frac{b^2}{2\pi |\bz_0 - \bw_0|^2}
\left(  \begin{array}{c}
    w_{0,1}-z_{0,1}\\w_{0,2}-z_{0,2} 
  \end{array}\right) \\ = -\bj_2(\bZ_0).
\end{equation*}
Since $w_{0,1}-z_{0,1}>0$ and $w_{0,2}-z_{0,2}>0$, the maximally dissipative glide
directions for $\bj_1$ and $\bj_2$ are $\bg_1$ and $-\bg_1$,
respectively. Thus, $\bz$ glides in the $\bg_1$ direction, so that 
$z_1$ and $z_2$ increase from $z_{0,1}$ and $z_{0,2}$, while $\bw$ glides in
the $-\bg_1$ direction, so $w_1$ and $w_2$ decrease from $w_{0,1}$ and
$w_{0,2}$. At some time $t_1$ we must obtain either $z_1(t_1)=w_1(t_1)$ or $z_2(t_1) =
w_2(t_1)$. If only one of these equalities holds, we are in the situations
described above and fine cross-slip occurs. If both of these
equalities hold, then $\bz$ and $\bw$ have collided and the  dynamics
is no longer defined.

\begin{remark}[Mirror Dislocations]\label{sec:mirror}
A direct inspection of equations \eqref{pkdisk} and \eqref{pkH} shows
 that the force on $\bz_\ell$ in $\Omega=B_1$ and $\Omega=\R2_+$
is the same as the force on $\bz_\ell$ in $\R2$ if one adds $N$ 
dislocations with opposite Burgers moduli at the points $\bar \bz_i$
in the case $\Omega=B_1$, and at $\tilde \bz_i$ in the case
$\Omega=\R2_+$, for $i=1,\ldots,N$.
\end{remark}
\section{Appendix} \label{sec:hull_proof}
We collect some technical results that are needed in the proofs from
Section \ref{sec:dynamics}.
\subsection{Proof of Lemma \ref{lem:hulls}}\label{proofoflem:hulls}
\begin{proof}[Proof of Lemma \ref{lem:hulls}]
Let $\bZ\in\R{2N}$ be fixed. For simplicity, in this proof we drop the explicit 
dependence on $\bZ$.
By \eqref{f023} we can write $F_\ell
=\{\bp_\ell,\bq_\ell\}$, with $\bp_\ell,\bq_\ell\in\R2$, 
for all 
$\ell=1,\ldots,N$. By definition, we have
\begin{align*}
\hat F
& = \left\{\vecc{s_1\bp_1+(1-s_1)\bq_1}{s_N\bp_N+(1-s_N)\bq_N}, 
\quad s_1,\ldots ,s_N \in [0,1] \right\} \quad \mbox{and} \\ 
\ch F
& =  \left\{\bV\in \R{2N} \!: \bV = 
\alpha_1 \vecN{\bp_1}{\bp_2}{\bp_N} +\alpha_2 \vecN{\bq_1}{\bp_2}{\bp_N} + 
\ldots +\alpha_{2^N} \vecN{\bq_1}{\bq_2}{\bq_N}, \right.\nonumber \\
&\qquad \qquad \mbox{where}\quad 
\left. \alpha_i \in [0,1] \text{ for all } i=1, \ldots, 2^N,\,\,\mbox{and}\,\, \sum_{i=1}^{2^N} \alpha_i = 1
 \right\}.
\end{align*}

To see that $\ch F 
\subseteq \hat F$, 
first note that $F 
\subseteq \hat F$ 
because if $\bX \in F$ 
then each component is either $\bp_i$ or $\bq_i$, which is
a point in $\hat F$ 
with $s_i = 1$ or $0$.

Next we show that 
$\hat F$ 
is convex.
Let $\bV,\bW \in \hat F$. 
Then their $i$-th components are
$\bv_i = s_i\bp_i + (1-s_i)\bq_i, \,\, \bw_i = r_i\bp_i + (1-r_i)\bq_i$, respectively.
Let $\lambda \in [0,1]$, then the $i$-th component of
$\lambda \bV + (1-\lambda)\bW$ is 
\begin{align*}
\lambda \bv_ i +(1-\lambda)\bw_i&=
\lambda (s_i\bp_i + (1-s_i)\bq_i) + (1-\lambda)(r_i\bp_i + (1-r_i)\bq_i)   \\
&=(\lambda s_i + (1-\lambda)r_i)\bp_i  + 
(\lambda (1-s_i) + (1-\lambda)(1-r_i))\bq_i.
\end{align*}
Setting $\theta_i := \lambda s_i + (1-\lambda)r_i$,
then $\theta_i \in [0,1]$ because $s_i, r_i \in [0,1]$
and 
$$\lambda (1-s_i) + (1-\lambda)(1-r_i) = 
1 - (\lambda s_i + (1-\lambda)r_i) = 1-\theta_i,$$
so 
$\lambda \bv_ i +(1-\lambda)\bw_i = \theta_i \bp_i + (1-\theta_i)\bq_i$,
with $\theta_i \in [0,1],$ for every $ i=1,\ldots,2N.$
Hence, $\lambda \bV + (1-\lambda)\bW \in \hat F$, 
so $\hat F$ 
is convex. 

We prove that $\hat F (\bZ)
\subseteq \ch F (\bZ)$
by induction on $N$. 
To highlight the dependence on the dimension, we write
$F^{(N)}(\bZ)\subseteq\R{2N}$ 
and $\hat F^{(N)}(\bZ)\subseteq\R{2N}$ 
for the sets $F (\bZ)$ and $\hat F (\bZ)$ defined in \eqref{f009} and 
\eqref{f026}. 

The case $N=1$ is trivial since
$F^{(1)} (\bZ)= \{\bp_1,\,\bq_1\}$ and
 any $\bV^{(1)}\in \hat F^{(1)}(\bZ)$ is of the form
$\bV^{(1)} = s_1\bp_1+(1-s_1)\bq_1 \in \ch F^{(1)}(\bZ)$.
Now assume that $\hat F^{(N-1)}(\bZ) 
\subseteq \ch F^{(N-1)}(\bZ)$ for some $N$. 
Let $\bV^{(N)}\in \hat F^{(N)}(\bZ)$, so
\begin{equation*}
\bV  ^{(N)} = \left(
  \begin{array}{c}
    s_1\bp_1+(1-s_1)\bq_1 \\ \vdots \\
    s_{N}\bp_{N}+(1-s_{N})\bq_{N} 
  \end{array}
\right) = \left(
  \begin{array}{c}
    \bV^{(N-1)}  \\
    s_{N}\bp_{N}+(1-s_{N})\bq_{N} 
  \end{array}
\right)
\end{equation*}
for $\bV^{(N-1)}\in \hat F^{(N-1)}(\bZ)$. By the induction hypothesis,
$\bV^{(N-1)}\in \ch F^{(N-1)}(\bZ)$, so there exist
$\{\alpha_i\}_{i=1}^{2^{N-1}}$
and $\hat \bV_i^{(N-1)}\in F^{(N-1)}(\bZ)$ 
 such that $\alpha_i\in [0,1]$,
$\sum _{i=1}^{2^{N-1}}\alpha_i = 1$ and
\begin{equation*}
  \bV^{(N-1)} =
\left(
  \begin{array}{c}
    s_1\bp_1+(1-s_1)\bq_1 \\ \vdots \\
    s_{N-1}\bp_{N-1}+(1-s_{N-1})\bq_{N-1} 
  \end{array}
\right) 
= \sum_{i=1}^{2^{N-1}}\alpha_i \hat \bV_i^{(N-1)}.
\end{equation*}
We define $\hat \bV_i^{(N)}\in F^{(N)}(\bZ)$ for $i=1,\ldots, 2^N$ as
\begin{equation*}
  \hat \bV_i^{(N)} := \left(
  \begin{array}{c}
    \hat \bV_i^{(N-1)}  \\
  \bp_{N}  
  \end{array}
\right),\quad
 \hat \bV_{i+2^{N-1}}^{(N)}: = \left(
  \begin{array}{c}
    \hat \bV_i^{(N-1)}  \\
  \bq_{N}  
  \end{array}
\right)\quad 
\mbox{for $i=1,\ldots,2^{N-1}$,}
\end{equation*}
and we define the coefficients $\beta_i\in[0,1]$
for $i=1,\ldots, 2^N$ as
\begin{equation*}
  \beta_i: = s_N\alpha_i,\quad \beta_{i+2^{N-1}}: = (1-s_N)\alpha_i
\quad \mbox{for $i=1,\ldots,2^{N-1}$.}
\end{equation*}
Hence, $\sum_{i=1}^{2^N}\beta_i = 1$ and
\begin{align*}
  \bV  ^{(N)} &= \left(
  \begin{array}{c}
    \bV^{(N-1)}  \\
    s_{N}\bp_{N}+(1-s_{N})\bq_{N} 
  \end{array}
\right)
= \left(
  \begin{array}{c}
    \sum_{i=1}^{2^{N-1}}\alpha_i \hat \bV_i^{(N-1)}\\
    s_{N}\bp_{N}+(1-s_{N})\bq_{N} 
  \end{array}
\right)\\
& = \left(
  \begin{array}{c}
    \sum_{i=1}^{2^{N-1}}s_N\alpha_i \hat \bV_i^{(N-1)}
+    \sum_{i=1}^{2^{N-1}}(1-s_N)\alpha_i \hat \bV_i^{(N-1)}\\
    \sum_{i=1}^{2^{N-1}}\alpha_i  s_{N}\bp_{N}+
\sum_{i=1}^{2^{N-1}}\alpha_i  (1-s_{N})\bq_{N} 
  \end{array}
\right)\\
&= \sum_{i=1}^{2^{N-1}}s_N\alpha_i \left(
  \begin{array}{c}
    \hat \bV_i^{(N-1)}  \\
    \bp_{N} 
  \end{array}
\right)
+  \sum_{i=1}^{2^{N-1}}(1-s_N)\alpha_i \left(
  \begin{array}{c}
    \hat \bV_i^{(N-1)}  \\
    \bq_{N} 
  \end{array}
\right)\\
&= \sum_{i=1}^{2^{N-1}}\beta_i  \hat \bV_i^{(N)}
+  \sum_{i=1}^{2^{N-1}}\beta_{i+2^{N-1}}  \hat \bV_{i+2^{N-1}}^{(N)}
= \sum_{i=1}^{2^{N}}\beta_i  \hat \bV_i^{(N)} \in \ch F^{(N)}(\bZ).
\end{align*}

\end{proof}

\subsection{Lemmas on the Singular Set}
\begin{lemma}\label{lem:path}
The set $\cD (F)$, as defined in \eqref{f007}, is open and connected.
\end{lemma}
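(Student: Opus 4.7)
The plan is to treat openness and connectedness separately, since openness is essentially a bookkeeping exercise while connectedness requires a genuine geometric argument.

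For openness, I observe that $\Omega^{N}$ is open in $\R{2N}$ as a product of open sets, and each $\Pi_{jk}$ defined in \eqref{f024} is the intersection of $\Omega^{N}$ with the closed linear subspace $\{\bZ \in \R{2N} : \bz_j = \bz_k\}$, hence relatively closed in $\Omega^{N}$. A finite union of relatively closed sets is relatively closed, and removing it from the open set $\Omega^{N}$ leaves an open subset of $\R{2N}$.

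For connectedness I will prove path-connectedness by induction on $N$. The key auxiliary fact is that if $U \subseteq \R{2}$ is open and connected and $\{p_1, \ldots, p_m\} \subset U$ is finite, then $U \setminus \{p_1, \ldots, p_m\}$ is open and path-connected. This follows by taking any continuous path $\gamma$ in $U$ joining two given points and rerouting $\gamma$ within small open disks $D_i \subset U$ centered at each $p_i$ (chosen pairwise disjoint and disjoint from the endpoints) by replacing the portion of $\gamma$ inside $D_i$ with an arc along $\partial D_i'$ for some concentric disk $D_i' \subset D_i$ with $p_i \in D_i'$. This is the place where dimension two is essential: a $1$-dimensional path can always circumvent a $0$-dimensional obstacle.

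The base case $N=1$ reduces to $\cD(F) = \Omega$, which is path-connected as an open connected subset of $\R{2}$. For the induction step, given $\bZ^{0}, \bZ^{1} \in \cD(F)$, I pick a point $\bz_N^{*} \in \Omega \setminus \{\bz_1^{0}, \ldots, \bz_{N-1}^{0}, \bz_1^{1}, \ldots, \bz_{N-1}^{1}\}$, which exists because $\Omega$ is a nonempty open set and only finitely many points are excluded. I then construct the connecting path in three stages: (i) holding $(\bz_1^{0}, \ldots, \bz_{N-1}^{0})$ fixed, move $\bz_N$ from $\bz_N^{0}$ to $\bz_N^{*}$ along a path in $\Omega \setminus \{\bz_1^{0}, \ldots, \bz_{N-1}^{0}\}$ supplied by the auxiliary fact; (ii) holding $\bz_N = \bz_N^{*}$ fixed, apply the induction hypothesis in the open connected set $\Omega' := \Omega \setminus \{\bz_N^{*}\}$ to connect $(\bz_1^{0}, \ldots, \bz_{N-1}^{0})$ to $(\bz_1^{1}, \ldots, \bz_{N-1}^{1})$ in the corresponding configuration space; (iii) symmetrically to (i), move $\bz_N$ from $\bz_N^{*}$ to $\bz_N^{1}$ while holding $(\bz_1^{1}, \ldots, \bz_{N-1}^{1})$ fixed. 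Concatenating the three paths yields a continuous curve in $\cD(F)$ from $\bZ^{0}$ to $\bZ^{1}$.

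The main subtlety, though entirely classical, is the auxiliary rerouting argument: because $\Omega$ is not assumed convex, straight-line interpolations in $\Omega^{N}$ are unavailable, so one must exploit path-connectedness of open connected subsets of $\R{2}$ together with the local perturbation around each puncture. Once that tool is in hand, the inductive scheme above mechanically reduces the $N$-dislocation problem to the $(N-1)$-dislocation problem in the punctured domain $\Omega'$.
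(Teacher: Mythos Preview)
Your argument is correct and follows essentially the same idea as the paper: both proofs establish path-connectedness by moving one coordinate at a time along a path in the punctured domain $\Omega\setminus\{\text{other dislocations}\}$. Your inductive packaging with the intermediate parking point $\bz_N^{*}$ is slightly more careful than the paper's direct iteration, since it cleanly avoids the possibility that a target coordinate $\bw_i$ coincides with one of the not-yet-moved $\bz_j$; the paper's proof tacitly assumes this collision does not occur.
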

\begin{proof}
From \eqref{f007} and \eqref{f024}, it is clear that $\cD(F)$ is open. We
will now show that $\cD(F)$ is path connected.
Let $\bw, \bz_1, \ldots, \bz_N \in \Omega$ be distinct points, and
  let $\bZ, \widehat \bZ\in \cD(F)$ be given by
 $\bZ = (\bz_1, \ldots,\bz_N)$ and 
$\widehat \bZ =(\bz_1,\ldots,\bz_{\ell-1},\bw,\bz_{\ell+1},\ldots,\bz_N)$. 
We construct  a continuous path
 $\gamma : [0,1]\to\cD(F)$ with  $\gamma(0) = \bZ$ and 
$\gamma (1) =  \widehat \bZ$ as follows.

Note that $\Omega \setminus\{\bz_1,\ldots,\bz_{\ell-1},\bz_{\ell+1},\ldots,\bz_N\}$ is path
connected. 
Thus there is a path 
$\gamma_\ell:[0,1]\to \Omega  \setminus\{\bz_1,\ldots,\bz_{\ell-1},\bz_{\ell+1},\ldots,\bz_N\}$
with $\gamma_\ell(0) = \bz_\ell$ and $\gamma_\ell(1) = \bw$. Then setting 
$\gamma(t) = (\bz_1,\ldots,\bz_{\ell-1},\gamma_\ell(t),\bz_{\ell+1},\ldots,\bz_N)$ 
for each $t\in [0,1]$
gives a path in $\cD(F)$ from $\bZ$ to $\widehat \bZ$.

We can now connect any 
$\bZ = (\bz_1, \ldots,\bz_N)\in \cD(F)$
to any other
$\bW = (\bw_1, \ldots,\bw_N)\in \cD(F)$
by first moving $\bz_1$ to $\bw_1$ as above, then $\bz_2$ to
$\bw_2$, and so on, until all the $\bz_i$ are moved to $\bw_i$,
producing a path from $\bZ$ to $\bW$. 
\end{proof}

To prove the following lemma we will use the fact that the renormalized energy (see \eqref{expansion}) diverges logarithmically with
the relative distance between the dislocations, that is,
\begin{equation}\label{U diverges}
U (\bz_1,\ldots,\bz_N)=-\sum_{i=1}^{N-1}  \sum_{j=i+1}^N
\frac{\mu \lambda b_i b_j}{4\pi}\log|\bLam(\bz_i - \bz_j)| + O(1)
\end{equation}
as $|\bz_i - \bz_j| \longrightarrow 0$. 
We refer to \cite{BM14} for a proof.

\begin{lemma}\label{lem:open}
Fix  $\ell \in \{1,\ldots,N\}$
and let $\be \in \R2 \setminus \{0\}$ be fixed. 
Then the set 
$V=\{\bZ \in \cD(F)\, :\, \bj_\ell (\bZ) \cdot \be = 0\}$
has empty interior.
\end{lemma}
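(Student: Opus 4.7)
The plan is to exploit real analyticity together with a blow-up argument near collisions. By Lemma \ref{lem:janalytic}, the map $\bZ\mapsto \bj_\ell(\bZ)\cdot\be$ is real analytic on every compact subset of $\cD(F)$, hence real analytic on $\cD(F)$. Since $\cD(F)$ is open and connected by Lemma \ref{lem:path}, the identity principle for real analytic functions on connected open sets applies: if $\bj_\ell\cdot\be$ vanishes on any nonempty open subset of $\cD(F)$, then it vanishes identically on $\cD(F)$. Consequently, to show that the interior of $V$ is empty, it suffices to exhibit a single configuration $\bZ^*\in\cD(F)$ at which $\bj_\ell(\bZ^*)\cdot\be\neq 0$.

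To produce such a configuration, I would pick any $j\neq\ell$, fix a point $\bz_j\in\Omega$ together with the remaining dislocations $\{\bz_i\}_{i\neq\ell,j}$ in $\Omega$ so that all of $\bz_j$ and these $\bz_i$ lie at some positive distance both from each other and from $\partial\Omega$, and then let $\bz_\ell := \bz_j + t\be$ for a small parameter $t>0$. For $t$ small enough, the resulting configuration $\bZ^*(t)$ belongs to $\cD(F)$. From \eqref{PKj1} and \eqref{k_def}, the only singular contribution to $\bj_\ell(\bZ^*(t))$ as $t\to 0^+$ comes from $b_\ell\bJ\bL\,\bk_j(\bz_\ell;\bz_j)$; the terms $\bk_i(\bz_\ell;\bz_i)$ with $i\neq\ell,j$ and the boundary-response term $\nabla u_0(\bz_\ell;\bz_1,\ldots,\bz_N)$ remain bounded, since the former are smooth away from collisions and the latter is smooth so long as no dislocation hits $\partial\Omega$ (the boundary response being insensitive to collisions between dislocations, as remarked in Section \ref{sec:special}).

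A short matrix computation shows that $\bJ\bL\bJ^T=\mu\,\bLam^T\bLam$, so the leading term of $\bj_\ell(\bZ^*(t))\cdot\be$ equals
\begin{equation*}
\frac{b_\ell b_j\lambda\mu}{2\pi}\,\frac{\bLam^T\bLam(t\be)\cdot\be}{|\bLam(t\be)|^2}+O(1)=\frac{b_\ell b_j\lambda\mu}{2\pi\, t}+O(1),
\end{equation*}
where I used $\bLam^T\bLam(t\be)\cdot\be=t|\bLam\be|^2$ and $\be\neq\bzero$ implies $|\bLam\be|^2>0$. Since this expression diverges as $t\to 0^+$, we have $\bj_\ell(\bZ^*(t))\cdot\be\neq 0$ for all sufficiently small $t>0$, providing the required contradiction.

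The only real obstacle is justifying that the non-singular contributions to $\bj_\ell$ stay bounded uniformly in $t$ as $\bz_\ell\to\bz_j$; this is where I would rely on the structural property that $\nabla u_0$ and the Cauchy-type kernels $\bk_i$ $(i\neq\ell,j)$ are smooth on compact subsets of $\Omega$ away from their own singularities, so their values at $\bz_\ell=\bz_j+t\be$ converge to their values at $\bz_j$. Everything else is either the elementary matrix identity above or a direct invocation of the real analytic identity theorem.
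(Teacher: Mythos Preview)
Your proof is correct and shares the paper's overall strategy: use analyticity of $\bj_\ell$ (Lemma \ref{lem:janalytic}) and connectedness of $\cD(F)$ (Lemma \ref{lem:path}) to reduce the problem to exhibiting one configuration where $\bj_\ell\cdot\be\neq 0$, and then produce such a configuration by driving two dislocations into collision along the direction $\be$.

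The execution differs in one useful way. The paper routes the contradiction through the renormalized energy: from $\bj_\ell\cdot\be\equiv 0$ and \eqref{PK=DU} it deduces $\nabla_{\bz_\ell}U\cdot\be\equiv 0$, places $\bz_k$ at $\bz_\ell+h\be$, slides $\bz_\ell$ toward it along $\be$, and invokes the logarithmic blow-up \eqref{U diverges} of $U$ (imported from \cite{BM14}) to contradict constancy. You instead compute $\bj_\ell$ directly from \eqref{PKj1} and \eqref{k_def}, use the clean identity $\bJ\bL\bJ^T=\mu\,\bLam^T\bLam$ to extract the $1/t$ singularity in $\bj_\ell(\bZ^*(t))\cdot\be$, and bound the remaining terms. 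Your route is more self-contained: it avoids the dependence on the energy expansion \eqref{U diverges} and on the identity $\bj_\ell=-\nabla_{\bz_\ell}U$, at the cost of a short matrix computation. The paper's route, on the other hand, makes the variational structure do the work and needs no explicit algebra with $\bJ$, $\bL$, $\bLam$. Both arguments implicitly require $N\ge 2$ to have a second dislocation to collide with.
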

\begin{proof}
The set $V$ is closed because $\bj_\ell$ is continuous. Suppose
there is a ball $B\subset V$.
From Lemma \ref{lem:janalytic}, we have that $\bj_\ell (\bZ)\cdot \be$ is
analytic in $B$ and is constant, therefore $\bj_\ell (\bZ)\cdot \be$ is
constant in the largest connected component of $\cD(F)$ 
containing $B$. Hence, by Lemma \ref{lem:path}, $\bj_\ell (\bZ)\cdot \be=0$
in $\cD(F)$. From \eqref{PK=DU}, we have that
\begin{equation}
  \label{Ue0}
  \nabla_{\bz_\ell}U(\bZ)\cdot \be = 0 \quad {\rm in}\,\,\, \cD(F), 
\end{equation}
so $U$ is constant when  $\bz_\ell$ varies along the direction $\be$.

Consider a fixed $\bZ^* = (\bz_1,\bz_2, \ldots,\bz_N)\in \cD(F)$.
Let $h >0$, and for  $\delta\in (0,h]$ define $\bz_\ell^\delta:=\bz_\ell+\delta\be$. 
We assume that $h_0$ small enough so that $\bz_\ell^\delta \in \Omega \setminus\{\bz_1,\ldots,\bz_N\}$
for  $\delta \in (0,h_0]$. 
Fix a $k\neq \ell$ and $h\in(0,h_0]$, and let $\bZ^h$ be the point in $\cD(F)$
obtained by replacing $\bz_k$ in $\bZ^*$ with $\bz_\ell^h$, i.e.,
\[
\bZ^h:=\{\bz_1,\ldots,\bz_\ell,\ldots,\bz_{k-1},\bz_\ell^h,\bz_{k+1},\ldots,\bz_N\}.
\]
Letting $\delta_n = \left(1-\frac1n \right)h$,
we construct the sequence $\{\bZ_n\}\subset \cD(F)$ given by
$$\bZ_n:=\left\{\bz_1,\ldots,\bz_\ell+\delta_n\be,
\ldots,\bz_{k-1},\bz_\ell^{h},\bz_{k+1},\ldots,\bz_N\right\}.$$
We have $\bZ_1=\bZ^h$, and 
$$\bZ_n\to\bZ_{\infty}:=
\{\bz_1,\ldots,\bz_\ell^h,\ldots,\bz_{k-1},\bz_\ell^h,\bz_{k+1},\ldots,\bz_N\}
\quad {\rm as} \,\,\, n\to\infty.$$
Note that $\bZ_{\infty}\notin \cD(F)$ because $\bz_\ell$ and $\bz_k$ are colliding
as $n\to \infty$. In particular, by \eqref{U diverges},
$|U(\bZ_n)|\to \infty$ as $n \to \infty$.
On the other hand, in the sequence $\{\bZ_n\}$, only the 
$\ell$-th dislocation is moving, and it is moving along the direction $\be$, 
so from \eqref{Ue0}, 
$U(\bZ_n)$ remains constant 
for all $n$.
We have reached a contradiction and we conclude that
$V$ does not contain any ball.
\end{proof}

\begin{lemma}
  \label{lem:Mempty}
The set $\widetilde M_\ell^\infty $, as defined in \eqref{Minfty2}, is empty.
\begin{proof}
Without loss of generality, let $\ell = 1$.
Recall that
\[
 \widetilde M_1^\infty  =  \{ \bZ\, : \,  \bj_1(\bZ)\cdot \bg_0 = 0, \
\partial^{\balpha}(\bj_1(\bZ)\cdot \bg_0) = 0 \text{ for all }\balpha \in N_1 \},
\]
with $N_1$ defined in \eqref{Nell}.
Suppose that $\widetilde M_1^\infty \neq \emptyset$ and 
$\tilde\bZ = (\tilde\bz_{1},\ldots,\tilde\bz_{N}) \in \widetilde M_1^\infty$.
Since $\bj_1\cdot \bg_0$ is analytic and $\tilde\bZ\in\widetilde M_1^\infty$,
we have that
$\bj_1(\bZ)\cdot \bg_0=\bj_1(\tilde\bZ)\cdot \bg_0$ for 
$\bZ \in \{\tilde\bz_{1}\}\times V$,
where $V$ is open in $\R{2N-2}$. 
Take $V$ to be the largest connected
component of $\cD(F)$ with $\bz_1 = \tilde\bz_{1}$, which, by the same
argument as Lemma \ref{lem:path}, can be written 
$\{\tilde\bz_{1}\}\times V = \{\bZ\in \cD(F)\, : \, \bz_1 = \tilde\bz_{1}\}$. 
We cannot follow the energy approach of Lemma \ref{lem:open},
because that would require moving $\bz_1$, which is fixed.
Instead, let $0<\ep_0 \ll 1$ and
construct a sequence $\{\bZ_n\}\subset  V_0$, where
\[
 V_0 : = \left\{\bZ\in V\, : \, \min_{i\in\{1,\ldots,N\}}\dist (\bz_i,\partial \Omega)>\ep_0\right\},
\]
($\ep_0$ is only required to assure we do not have boundary collisions).
To be precise, choose $\bz_3,\ldots,\bz_N\in\Omega$ 
pairwise distinct and such that $\bz_k\neq \tilde\bz_1$ and 
$\dist(\bz_k,\partial\Omega)>\ep_0$ for every $k=3,\ldots,N$.
Therefore, for $n\geq1$ and $\delta_0>0$ sufficiently small, 
$\bZ_n:=(\tilde\bz_{1}, \tilde\bz_{1} + \delta_n \bg_0,\bz_3,\ldots,\bz_N)$ 
belongs to $V_0$, where
where   $\delta_n =\delta_0/n$.
Then $\bj_1(\bZ_n)\cdot \bg_0=\bj_1(\tilde\bZ)\cdot \bg_0$ by construction,
but $\bZ_\infty \notin \cD(F)$, where
$\bZ_\infty = \lim_{n\to \infty}\bZ_n$, because 
the first and second dislocations have collided. 

For each $n$, all the components of $\bZ_n$ are a bounded distance from $\partial \Omega$.
Thus, by \eqref{k_def}, \eqref{u0Neumann}, and standard elliptic estimates, there exists $C>0$ such that
$|\nabla u(\tilde\bz_1;\bZ_n)|\leq C$ for all $n$. For  each $n$ the singular strains
$|\bk_i(\tilde\bz_1;\bz_i)|$ are  bounded 
for $i\geq 3$.
However, 
$|\bk_2(\tilde\bz_1;\tilde\bz_1+\delta_n\bg_0)|\geq c/\delta_n \to \infty$ as $n\to \infty$, for some $c>0$.
Thus, we see from \eqref{k_def} and \eqref{PKj} that for large $n$, the force $\bj_1(\bZ_n)$
will be large in magnitude and  
aligned closely with $b_1b_2\big( \tilde\bz_1 -(\tilde\bz_1 +\delta_n\bg_0)\big)$ 
(i.e., $\bj_1(\bZ_n)$ will be nearly parallel or anti-parallel  to  $\bg_0$). 
Therefore, 
$\bj_1(\tilde\bZ)\cdot \bg_0=\bj_1(\bZ_n)\cdot \bg_0 
\geq c_1 |\bj_1(\bZ_n) |{\cdot}|\bg_0| \to \infty$
as $n\to \infty$, for some $c_1>0$, which contradicts the fact that 
$\bj_1(\bZ_n)\cdot \bg_0=\bj_1(\tilde\bZ)\cdot \bg_0$
We conclude that $\widetilde M_1^\infty =\emptyset$.
\end{proof}
\end{lemma}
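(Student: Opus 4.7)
The plan is to combine the analyticity of $\bj_\ell \cdot \bg_0$ (Lemma \ref{lem:janalytic}) with the divergence of the Peach--K\"ohler force as two dislocations collide. Suppose for contradiction that $\tilde\bZ = (\tilde\bz_1, \ldots, \tilde\bz_N) \in \widetilde M_\ell^\infty$; without loss of generality take $\ell = 1$. Consider the slice function $\phi(\bz_2, \ldots, \bz_N) := \bj_1(\tilde\bz_1, \bz_2, \ldots, \bz_N) \cdot \bg_0$. By Lemma \ref{lem:janalytic}, $\phi$ is real analytic on its natural domain, and by \eqref{Minfty2} together with \eqref{Nell}, every partial derivative of $\phi$ vanishes at $(\tilde\bz_2, \ldots, \tilde\bz_N)$.

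Next I would identify that natural domain as $V := \{(\bz_2, \ldots, \bz_N) \in \Omega^{N-1} : \bz_i \neq \bz_j \text{ for all } i \neq j,\ \text{and } \bz_i \neq \tilde\bz_1 \text{ for all } i\}$ and show it is connected. This follows by a mild adaptation of the argument used in Lemma \ref{lem:path}: move the points one at a time along paths in $\Omega$ that avoid the finitely many forbidden positions. Since $\phi$ is real analytic on the connected open set $V$ and vanishes to infinite order at an interior point, the identity theorem forces $\phi \equiv 0$ on $V$.

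To reach a contradiction I will exhibit a sequence in $V$ along which $|\phi|$ diverges. Fix distinct interior points $\bz_3, \ldots, \bz_N \in \Omega \setminus \{\tilde\bz_1\}$ bounded away from $\partial \Omega$, and for $\delta_0 > 0$ small enough set $\bz_2^{(n)} := \tilde\bz_1 + (\delta_0/n)\, \bg_0$, so that $\bZ_n := (\tilde\bz_1, \bz_2^{(n)}, \bz_3, \ldots, \bz_N) \in \cD(F)$ for all $n \geq 1$. Standard elliptic estimates applied to \eqref{u0Neumann} show that $\nabla_{\bz_1} u_0(\tilde\bz_1; \bZ_n)$ is bounded uniformly in $n$, and each $\bk_i(\tilde\bz_1; \bz_i)$ for $i \geq 3$ is trivially bounded. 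By contrast, a direct computation from \eqref{k_def} shows that $\bk_2(\tilde\bz_1; \bz_2^{(n)})$ is a nonzero scalar multiple of $n\, \bJ^T \bg_0$, and combining this with \eqref{PKj1} yields
\begin{equation*}
\bj_1(\bZ_n) \cdot \bg_0 = -\frac{b_1 b_2\, \lambda\, n}{2\pi \delta_0\, |\bLam \bg_0|^2}\, \bg_0 \cdot \bJ \bL \bJ^T \bg_0 + O(1).
\end{equation*}

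The main obstacle is verifying that the leading coefficient does not vanish; otherwise the singular contributions could conceivably cancel in the direction $\bg_0$. A direct computation shows that $\bJ \bL \bJ^T = \mathrm{diag}(\mu \lambda^2, \mu)$ is positive definite, so $\bg_0 \cdot \bJ \bL \bJ^T \bg_0 > 0$ since $\bg_0 \neq \bzero$. Therefore $|\phi(\bz_2^{(n)}, \bz_3, \ldots, \bz_N)| \to \infty$ as $n \to \infty$, contradicting $\phi \equiv 0$ and proving $\widetilde M_1^\infty = \emptyset$.
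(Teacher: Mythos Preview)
Your argument is correct and follows essentially the same route as the paper's: analytic continuation of $\bj_1\cdot\bg_0$ on the slice $\{\bz_1=\tilde\bz_1\}$, followed by sending $\bz_2\to\tilde\bz_1$ along $\bg_0$ so that the singular strain $\bk_2$ dominates and forces $|\bj_1\cdot\bg_0|\to\infty$. Your explicit computation $\bJ\bL\bJ^T=\mathrm{diag}(\mu\lambda^2,\mu)$ to verify non-vanishing of the leading coefficient is in fact cleaner than the paper's treatment of the anisotropic case, where the claim that $\bj_1(\bZ_n)$ is ``aligned closely with $b_1b_2(\tilde\bz_1-(\tilde\bz_1+\delta_n\bg_0))$'' is only literally true when $\lambda=1$.
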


The following lemma was used in the proof of Theorem \ref{theorem intersections}.
\begin{lemma}\label{lemma unique}
Let $\bZ_1 \in \cA_\ell \cap \cA_k$ for $k\neq \ell$, but 
$\bZ_1 \notin \cA_i$ for $i\neq k,\ell$.  
Also, assume that
$\bZ_1\notin \cE_{{\rm zero}}\cup \cS_\ell \cup \cS_k$ and that \eqref{cond1}-\eqref{cond4} hold. 
Then there is at most one  $\bZ \in{\rm co}\, F (\bZ_1)$ such that
\begin{equation}\label{ortho} 
 \bn_k(\bZ_1)\cdot \bZ=0\quad \text{and}\quad \bn_{\ell}(\bZ_1)\cdot \bZ=0,
\end{equation}
where $\bn_k$ and $\bn_\ell$ are normal vectors for $\cA_k$ and
$\cA_\ell$, respectively.
\end{lemma}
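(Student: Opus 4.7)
The plan is to parameterize $\ch F(\bZ_1)$ explicitly as an affine image of $[0,1]^2$, to translate the two orthogonality conditions in \eqref{ortho} into an affine $2{\times}2$ system in those parameters, and then to extract uniqueness from the observation that the fine cross-slip conditions \eqref{cond1}-\eqref{cond4} force the four vertices of the image to land in the four distinct open quadrants of $\R2$.

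First, I would invoke Lemma \ref{lem:hulls} to write $\ch F(\bZ_1) = \ch F_1(\bZ_1) \times \cdots \times \ch F_N(\bZ_1)$. Under the standing hypotheses, $\ch F_i(\bZ_1)$ is a single point for $i \notin \{k,\ell\}$ (since $\bZ_1 \notin \cA_i \cup \cE_{\rm zero}$), while $\ch F_k(\bZ_1)$ and $\ch F_\ell(\bZ_1)$ are non-degenerate segments (non-degeneracy relying on $\bj_k(\bZ_1), \bj_\ell(\bZ_1) \neq \bzero$ together with $\bg_k^+ \neq \bg_k^-$ and $\bg_\ell^+ \neq \bg_\ell^-$). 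I would then use the bilinear map
\begin{equation*}
\bZ(s,t) := st\,\bef^{(+,+)}(\bZ_1) + s(1-t)\,\bef^{(+,-)}(\bZ_1) + (1-s)t\,\bef^{(-,+)}(\bZ_1) + (1-s)(1-t)\,\bef^{(-,-)}(\bZ_1),
\end{equation*}
$(s,t) \in [0,1]^2$, as a bijective affine parameterization of $\ch F(\bZ_1)$. This works because the vertices $\bef^{(\pm,\pm)}(\bZ_1)$ from \eqref{f plus minus} differ only in their $k$-th and $\ell$-th slots; a direct expansion shows that the $k$-th component of $\bZ(s,t)$ depends only on $s$ and the $\ell$-th only on $t$, so $(s,t) \mapsto \bZ(s,t)$ is affine and injective.

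Next, I would recast \eqref{ortho} as $\Psi(s,t) = \bzero$, where
\begin{equation*}
\Psi(s,t) := \bigl(\bn_k(\bZ_1) \cdot \bZ(s,t),\ \bn_\ell(\bZ_1) \cdot \bZ(s,t)\bigr)
\end{equation*}
is affine from $\R2$ to $\R2$. Its values at the four corners of $[0,1]^2$ are exactly the pairs $(\bn_k(\bZ_1)\cdot \bef^{(\sigma,\tau)}(\bZ_1),\, \bn_\ell(\bZ_1)\cdot \bef^{(\sigma,\tau)}(\bZ_1))$ for $(\sigma,\tau) \in \{+,-\}^2$, and the fine cross-slip hypothesis \eqref{cond1}-\eqref{cond4} places these four points in the four distinct open quadrants of $\R2$.

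The main step is then the elementary geometric observation that no affine line in $\R2$ can meet all four open quadrants (it meets at most three). Hence the four corner values of $\Psi$ are not collinear, the linear part of $\Psi$ has nonzero determinant, and $\Psi$ is an affine bijection of $\R2$ onto itself. In particular, $\Psi^{-1}(\bzero)$ consists of at most one point, and under the parameterization this yields at most one $\bZ \in \ch F(\bZ_1)$ satisfying \eqref{ortho}. The main obstacle I anticipate is the bookkeeping required to match the four vertices of $\ch F(\bZ_1)$ with the $\bef^{(\pm,\pm)}(\bZ_1)$ and to confirm non-degeneracy of the two segments; the quadrant argument itself is short, and no estimate on the determinant of the Jacobian of $\Psi$ is needed beyond its non-vanishing.
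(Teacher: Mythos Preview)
Your argument is correct and lands on the same reduction as the paper: parameterize $\ch F(\bZ_1)$ by $(s,t)\in[0,1]^2$ via Lemma~\ref{lem:hulls}, then show the affine map $\Psi$ recording the two normal projections is invertible. Where you diverge is in the last step. The paper writes the system as $A\binom{s}{t}=\bb$ with $A$ built from $\bn_k\cdot(\bef^{(+,+)}-\bef^{(-,+)})$, etc., and then uses \eqref{cond1}--\eqref{cond4} to derive the explicit inequalities $|a_{12}|<|a_{11}|$ and $|a_{21}|<|a_{22}|$, together with $a_{11},a_{22}<0$, to conclude $\det A>0$. Your quadrant argument replaces that computation by the observation that \eqref{cond1}--\eqref{cond4} put the four corner values $\Psi(\sigma,\tau)$ in the four distinct open quadrants of $\R2$, and no affine line meets all four open quadrants; hence the image of $\Psi$ is not contained in a line and the linear part of $\Psi$ is nonsingular. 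This is shorter and coordinate-free, and it makes transparent that the only role of \eqref{cond1}--\eqref{cond4} here is the sign pattern, not any quantitative bound. The paper's explicit computation, on the other hand, gives slightly more: a definite sign $\det A>0$ and the diagonal-dominance structure, which could be useful if one later wanted stability estimates or to track how close $(s,t)$ is to the interior of $[0,1]^2$. One small remark: injectivity of $(s,t)\mapsto\bZ(s,t)$ is not actually needed for your argument---surjectivity onto $\ch F(\bZ_1)$ together with injectivity of $\Psi$ already yields at most one $\bZ$---so the non-degeneracy bookkeeping you flag as an obstacle can be skipped.
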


\begin{proof}
Without loss of generality, assume that $k=1$ and $\ell=2$. Then  \eqref{f plus minus} become
\begin{align*}
  \bef^{(+,+)}(\bZ_1) &= (\hbf_1^+(\bZ_1),\hbf_2^+(\bZ_1),\bef_3(\bZ_1),\ldots,\bef_N(\bZ_1)), \\
  \bef^{(+,-)}(\bZ_1) &= (\hbf_1^+(\bZ_1),\hbf_2^-(\bZ_1),\bef_3(\bZ_1),\ldots,\bef_N(\bZ_1)),\\
  \bef^{(-,+)}(\bZ_1) &= (\hbf_1^-(\bZ_1),\hbf_2^+(\bZ_1),\bef_3(\bZ_1),\ldots,\bef_N(\bZ_1)),\\
  \bef^{(-,-)}(\bZ_1) &= (\hbf_1^-(\bZ_1),\hbf_2^-(\bZ_1),\bef_3(\bZ_1),\ldots,\bef_N(\bZ_1)).
\end{align*}
From now on, we will omit the dependence on $\bZ_1$ and simply write
$\bef^{(+,+)}$, etc.
The important feature of the form of these four fields is that
\begin{subequations} \label{eq:2}
\begin{align}
&  (\hbf_1^+-\hbf_1^-,\bzero,\ldots,\bzero)= \bef^{(+,+)}-\bef^{(-,+)} = \bef^{(+,-)} - \bef^{(-,-)}, \label{eq:2a}\\
 & (\bzero,\hbf_2^+-\hbf_2^-,\ldots,\bzero)= \bef^{(+,+)}-\bef^{(+,-)} = \bef^{(-,+)} - \bef^{(-,-)}. \label{eq:2b}
\end{align}
\end{subequations}
Then \eqref{cond1}-\eqref{cond4} become
\begin{subequations}\label{eq:3}
\begin{align}
&  \bn_1\cdot \bef^{(+,+)} <0,\quad  \bn_1\cdot \bef^{(+,-)} <0,\quad
 \bn_1\cdot\bef^{(-,+)} >0,\quad  \bn_1\cdot\bef^{(-,-)} >0\label{eq:3a}\\
& \bn_2\cdot \bef^{(+,+)} <0,\quad  \bn_2\cdot\bef^{(+,-)} >0,\quad
 \bn_2\cdot\bef^{(-,+)} <0, \quad  \bn_2\cdot \bef^{(-,-)} >0.\label{eq:3b}
\end{align}
\end{subequations}

Let $\bZ \in{\rm co}\, F (\bZ_1)$ satisfy \eqref{ortho}. From  
Lemma \ref{lem:hulls}, we have that there
exist $s,t\in [0,1]$ such that
\begin{equation}
  \label{eq:4}
  \begin{split}  
  \bZ &= (s\hbf_1^+ + (1-s)\hbf_1^-, \, t \hbf_2^+ + (1-t)\hbf_2^-,\bef_3,\ldots,\bef_N)\\
& = (s (\hbf_1^+-\hbf_1^-),\, t(\hbf_2^+ -\hbf_2^-),\bzero) + (\hbf_1^-,\hbf_2^-,\bef_3,\ldots,\bef_N)\\
& = s(\bef^{(+,+)} - \bef^{(-,+)}) + t(\bef^{(+,+)} - \bef^{(+,-)}) + \bef^{(-,-)},
\end{split}
\end{equation}
where we used \eqref{eq:2}. 
By \eqref{eq:4}, the conditions in \eqref{ortho} become
\begin{align*}
&    \bn_1\cdot (\bef^{(+,+)} - \bef^{(-,+)}) s + \bn_1\cdot (\bef^{(+,+)} - \bef^{(+,-)}) t=-\bn_1\cdot
    \bef^{(-,-)}, \\
 &   \bn_2\cdot (\bef^{(+,+)} - \bef^{(-,+)}) s + \bn_2\cdot (\bef^{(+,+)} - \bef^{(+,-)}) t=-\bn_2\cdot
    \bef^{(-,-)},
\end{align*}
or, equivalently,
\begin{equation}\label{Astb}
A\vec{s}{t}=\bb,
\end{equation}
where
\begin{align}
  \label{eq:8}
A := \left(
  \begin{array}{cc}
    a_{11} & a_{12} \\ a_{21} & a_{22}
  \end{array}
\right)=
\left(
  \begin{array}{cc}
    \bn_1\cdot (\bef^{(+,+)} - \bef^{(-,+)}) & \bn_1\cdot (\bef^{(+,+)} - \bef^{(+,-)})\\
\bn_2\cdot (\bef^{(+,+)} - \bef^{(-,+)}) & \bn_2\cdot (\bef^{(+,+)} - \bef^{(+,-)})
  \end{array}\right)
\end{align}
and
\begin{align*}
\bb := \left(
  \begin{array}{c}
    -\bn_1\cdot    \bef^{(-,-)} \\ -\bn_2\cdot    \bef^{(-,-)}
  \end{array}
\right).
\end{align*}
To prove the lemma it is enough to show that there is a unique choice of $s$ and $t$ that satisfy \eqref{Astb}. 
We prove this by using \eqref{eq:3} to show that ${\rm det}A >0$.

From \eqref{eq:3a} we have
\begin{align}
  \label{eq:9}
 a_{11}= \bn_1 \cdot (\bef^{(+,+)} - \bef^{(-,+)}) <0, \quad{\rm and}\quad
\bn_1\cdot (\bef^{(+,+)} - \bef^{(-,-)}) <0
\end{align}
Thus, by \eqref{eq:2b} and \eqref{eq:8}
\begin{align}
  \label{eq:10}
  0 > \bn_1 \cdot (\bef^{(+,+)} - \bef^{(-,-)}) &= 
\bn_1 \cdot (\bef^{(+,+)} - \bef^{(-,+)}) +\bn_1 \cdot (\bef^{(-,+)} - \bef^{(-,-)})\\
 &= a_{11} + a_{12}.\nonumber
\end{align}
Again using \eqref{eq:3a}, we have
\begin{align*}
  \bn_1 \cdot (\bef^{(+,-)} - \bef^{(-,+)})<0.
\end{align*}
Thus
\begin{align}
  \label{eq:12}
  0> \bn_1 \cdot (\bef^{(+,-)} - \bef^{(-,+)})=&
\bn_1 \cdot (\bef^{(+,-)} - \bef^{(+,+)})+\bn_1 \cdot (\bef^{(+,+)} - \bef^{(-,+)}) 
\\&= -a_{12}+a_{11}.\nonumber
\end{align}
Combining equations \eqref{eq:10} and \eqref{eq:12} we have
\begin{align}
  \label{eq:13}
  a_{11} < a_{12}< -a_{11} \implies |a_{12}| < -a_{11} = |a_{11}|.
\end{align}
Similarly,
\begin{align}
  \label{eq:14}
  a_{22} =& \bn_2\cdot (\bef^{(+,+)}-\bef^{(+,-)})<0,\quad
\bn_2\cdot(\bef^{(+,+)}-\bef^{(-,-)})<0,\\
&\bn_2\cdot(\bef^{(-,+)}-\bef^{(+,-)})<0.\nonumber
\end{align}
Noting that, from \eqref{eq:2} and \eqref{eq:8}, 
we have $a_{21} = \bn_2\cdot(\bef^{(+,+)}-\bef^{(-,+)}) = \bn_2\cdot(\bef^{(+,-)}-\bef^{(-,-)})$
so
\begin{align}
  \label{eq:15}
  0>\bn_2\cdot(\bef^{(+,+)}-\bef^{(-,-)}) &= 
\bn_2\cdot(\bef^{(+,+)}-\bef^{(+,-)}) + \bn_2\cdot(\bef^{(+,-)}-\bef^{(-,-)})
\\& = a_{22}+a_{21},\nonumber
\end{align}
and
\begin{align}
  \label{eq:16}
  0>\bn_2\cdot(\bef^{(-,+)}-\bef^{(+,-)}) &= 
\bn_2\cdot(\bef^{(-,+)}-\bef^{(+,+)})+\bn_2\cdot(\bef^{(+,+)}-\bef^{(+,-)})
\\
& = -a_{21}+a_{22}.\nonumber
\end{align}
Combining equations \eqref{eq:15} and \eqref{eq:16} we have
\begin{align}
  \label{eq:17}
  a_{22} < a_{21}< -a_{22} \implies |a_{21}| < -a_{22} = |a_{22}|.
\end{align}
From \eqref{eq:13} and \eqref{eq:17} we have
\begin{align*}
  0&< |a_{11}||a_{22}| -|a_{12}||a_{21}| 
\leq |a_{11}||a_{22}|- a_{12} a_{21} =
a_{11} a_{22}- a_{12} a_{21} ={\rm det}A,
\end{align*}
where we also used that $a_{11},\, a_{22} <0$ from 
\eqref{eq:9} and \eqref{eq:14}.
\end{proof}

\section*{Acknowledgments}
The authors warmly thank the Center for Nonlinear Analysis (NSF Grant No.\@ DMS-0635983), where part of this research was carried out. The
research of I.~Fonseca was partially funded by the National Science Foundation
under Grant No. DMS-0905778 and that of G.~Leoni under Grant No.~DMS-1007989.
T.~Blass, I.~Fonseca, and G.~Leoni also acknowledge support of the National Science
Foundation under the PIRE Grant No.~OISE-0967140. The work of M.~Morandotti was partially supported by grant FCT$\_$UTA/CMU/MAT/0005/2009.

\bibliographystyle{plain}
\bibliography{disloc}

\end{document}